\newtheorem{theorem}{\bf Theorem}
\newtheorem{lemma}{\bf Lemma}[section]
\newtheorem{proposition}{\bf Proposition}[section]
\newcommand{\R}{{\mathbb R}}
\newcommand{\rn}{{\mathbb R}^n}
\newcommand{\N}{{\mathbb N}}
\newcommand{\lam} {\lambda}
\newcommand{\lap}{\Delta}
\newcommand {\crit} {2_k^\sharp}
\begin{document}

\title[Polyharmonic Operators on  a Compact Riemannian Manifold]{GJMS-type Operators on   a  compact Riemannian manifold: Best constants and Coron-type solutions}
\author{Saikat Mazumdar}
\address{Institut Elie Cartan de Lorraine, Universit\'e de Lorraine, BP 70239, 54506    Vand\oe uvre-l\`es-Nancy, France}
\email{saikat.mazumdar@univ-lorraine.fr}
\date{December 7th, 2015, revised July 5th, 2016}
\thanks{This work is part of the PhD thesis of the author, funded by "F\'ed\'eration Charles Hermite" (FR3198 du CNRS) and "R\'egion Lorraine". The author acknowledges these two institutions for their supports.}
\maketitle

\begin{abstract}
In this paper we investigate the existence of solutions to a nonlinear  elliptic problem involving critical Sobolev exponent for  a polyharmonic operator   on a Riemannian manifold $M$. We first  show  that  the best  constant of the  Sobolev embedding on a  manifold can be chosen as close as one wants to the Euclidean one, and as a consequence derive the existence of minimizers  when the energy functional goes below a quantified threshold. Next,  higher energy solutions are obtained  by Coron's topological method, provided that the minimizing solution does not exist.  To perform this topological argument, we  overcome the difficulty of dealing with  polyharmonic operators on a Riemannian manifold and adapting Lions's concentration-compactness lemma. Unlike Coron's original argument for a bounded domain in $\R^{n}$, we need to do more than chopping out a small ball from the manifold $M$.  Indeed, our topological assumption  that a small sphere on $M$ centred at a point $p \in  M$  does not retract to  a point in $M\backslash \{ p \}$  is necessary, as shown for  the case of the  canonical sphere where chopping out a small  ball is not enough.
\end{abstract}


\section{Introduction}
Let $M$ be a compact manifold of dimension $n\geq 3$ without boundary. Let $k$ be a positive integer such that $2k < {n}$. Taking inspiration from the construction of the ambient metric of Fefferman-Graham \cite{fg1} (see \cite{fg2} for an extended analysis of the ambient metric), Graham-Jenne-Mason-Sparling \cite{gjms} have defined a family of conformally invariant operators defined for any Riemannian metric. More precisely, for any Riemannian metric $g$ on $M$, there exists a local differential operator $P_g: C^\infty(M)\to C^\infty(M)$ such that $P_g=\Delta_g^k+lot$ where $\Delta_g:=-\hbox{div}_g(\nabla)$, and, given $u\in C^\infty(M)$ and defining $\hat{g}=u^{\frac{4}{n-2k}}g$, we have that
\begin{equation}\label{eq:conf}
P_{\hat{g}}(\varphi)=u^{-\frac{n+2k}{n-2k}}P_g\left( u\varphi\right)\hbox{ for all }\varphi\in C^\infty(M).
\end{equation}

\smallskip\noindent Moreover, $P_g$ is self-adjoint with respect to the $L^2-$scalar product. A scalar invariant is associated to this operator, namely the $Q-$curvature, denoted as $Q_g\in C^\infty(M)$.  When $k=1$, $P_g$ is the conformal Laplacian and the $Q-$curvature is the scalar curvature multiplied by a constant. When $k=2$, $P_g$ is the Paneitz operator introduced in \cite{paneitz}. The $Q-$curvature was introduced by  Branson and \O rsted  \cite{bo}. The definition of $Q_g$ was then generalized by Branson \cites{br1,br2}. In the specific case $n>2k$, we have that $Q_g:=\frac{2}{n-2k}P_g(1)$. Then, taking $\varphi\equiv 1$ in \eqref{eq:conf}, we get that $P_gu= \frac{n-2k}{2}Q_{\hat{g}}u^{\frac{n+2k}{n-2k}}$ on $M$. Therefore, prescribing the $Q-$curvature in a conformal class amounts to solving a nonlinear elliptic partial differential equation(PDE )of $2k^{th}$ order. Results for  the prescription of  the $Q-$curvature problem  for the Paneitz operator (namely $k=2$) are in Djadli-Hebey-Ledoux \cite{DHL}, Robert \cite{robertpams}, Esposito-Robert \cite{espositorobert}. Recently, Gursky-Malchiodi \cite{gm} proved the existence of a metric with constant $Q-$curvature (still for $k=2$) provided certain geometric hypotheses on the manifold $(M,g)$ holds. These hypotheses have been simplified by Hang-Yang \cite{hy1} (see the lecture notes \cite{hy2})

\medskip\noindent In the present paper, we are interested in a generalization of the prescription of the $Q-$curvature problem. Namely, given $f\in C^\infty(M)$, we investigate the existence of $u\in C^\infty(M)$, $u>0$, such that
\begin{equation}\label{eq:P:f}
P u=f u^{\crit-1}\hbox{ in }M,
\end{equation}
where $\crit:=\frac{2n}{n-2k}$ and $P: C^\infty(M)\to C^\infty(M)$ is a  smooth self-adjoint $2k^{th}$ order partial differential operator  defined by
  \begin{align}{\label{the op}}
 Pu= \Delta_{g}^{k}u + \sum \limits_{l=0}^{k-1} (-1)^{l}\nabla^{j_{l} \ldots j_{1}} \left( A_{l}(g)_{i_{1} \ldots i_{l}, j_{1} \ldots j_{l}} \nabla^{i_{1} \ldots i_{l}} u\right)
 \end{align}
 where the indices are raised via the musical isomorphism and for all $l \in \{ 0, \ldots, k-1\}$, $A_{l}(g) $  is a smooth symmetric $T_{2l}^{0}$-tensor field on $M$ (that is: $A_{l}(g)(X,Y)  =A_{l}(g)(Y,X) $ for all $T_{0}^{l}$-tensors $X,Y$ on $M$). When $P:=P_g$, then \eqref{eq:P:f} is equivalent to say that $Q_{\hat{g}}=\frac{2}{n-2k}f$ with $\hat{g}=u^{\frac{4}{n-2k}}g$.

\medskip\noindent The conformal invariance \eqref{eq:conf} of the geometric operator $P_g$ yields obstruction to the existence of solutions to \eqref{eq:P:f}. The historical reference here is Kazdan-Warner \cite{kw}; for the general GJMS operators, we refer to Delano\"e-Robert \cite{dr}. In particular, it follows from \cite{dr} that on the canonical sphere $(\mathbb{S}^n,\hbox{can})$, there is no positive solution $u\in C^\infty(\mathbb{S}^n)$ to $P_{\hbox{can}}u=(1+\epsilon\varphi) u^{\crit-1}$ for all $\epsilon\neq 0$ and all first spherical harmonic $\varphi$. For the conformal Laplacian (that is $k=1$), Aubin \cite{aubin} proved that the existence of solutions is guaranteed if a functional goes below a specific threshold. We generalize this result for any $k\geq 1$ in Theorem \ref{th:min}. In the case of a smooth bounded domain, Coron \cite{coron} introduced a variational method based on topological arguments,  provided  the minimizing solution does not exist.   Our main theorem is in this spirit:

  \begin{theorem}{\label{main}} 
  Let $(M,g)$ be a  smooth, compact Riemannian manifold  of dimension $n$ and let $k$ be a positive integer such that $2k < {n}$.  We let $P$ be a coercive operator as in \eqref{the op}. Let  $\iota_{g}>0$ be the  injectivity radius of the manifold $M$.  Suppose that the manifold $M$ contains a point  $x_{0}$ such that  the embedded $(n-1)-$ dimensional sphere $\mathbb{S}_{x_0}({\iota_{g}}/ {2}):=\{x\in M/\, d_g(x,x_0)={\iota_{g}}/ {2}\}$ is not contractible in $M\backslash \{x_{0}\}$. Then there exists $\epsilon_{0}\in (0, \frac{\iota_{g}} {2})$ such that the equation
   \begin{eqnarray}\label{eq:coron}
 \left \{ \begin{array} {lc}
          Pu  = \left| u \right|^{ \crit - 2} u \qquad  \text{in} ~ \Omega_{M}\\
          D^{\alpha} u=0  \ \  \  \qquad \qquad\text{on } ~\partial \Omega_{M} \quad \text{for} \ \ |\alpha| \leq k-1
            \end{array} \right. 
\end{eqnarray} 
has a non-trivial $C^{2k}(\Omega_{M})$ solution for $\Omega_{M}:=M \backslash \overline{B}_{x_{0}}(\epsilon_{0})$. Moreover, if the Green's Kernel of $P$ on $\Omega_M$ is positive, then we can choose $u>0$.\end{theorem}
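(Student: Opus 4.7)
The plan is to follow Coron's topological minimax scheme, adapted to the higher-order operator $P$ on $(M,g)$. Let $H$ denote the closure of $C_c^\infty(\Omega_M)$ under $\|u\|_P^2 := \int_{\Omega_M} u\,Pu\, dv_g$, which by coercivity of $P$ is equivalent to the standard $H^k$-norm. Critical points of
$$I(u) := \frac{1}{2}\|u\|_P^2 - \frac{1}{\crit}\int_{\Omega_M}|u|^{\crit}\, dv_g$$
are precisely the weak solutions of \eqref{eq:coron}. Set $\mu_1 := \inf_{u \neq 0} \|u\|_P^2/\|u\|_{\crit}^2$ and $c_1 := \frac{k}{n}\mu_1^{n/(2k)}$. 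Theorem~\ref{th:min} gives $\mu_1 \le K_0^{-2}$ with $K_0$ the sharp Euclidean Sobolev constant, and strict inequality delivers a minimizing solution. I therefore assume $\mu_1 = K_0^{-2}$ with the infimum unattained, so $c_1 = \frac{k}{n}K_0^{-n/(2k)}$, and search for higher critical points.

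The second step is to establish the Palais--Smale condition $\text{PS}_c$ for $c \in (c_1, 2c_1)$ by adapting Struwe's profile decomposition to the operator \eqref{the op} on $(M,g)$. Any such PS sequence admits, up to subsequence, a decomposition $u_j = u_\infty + \sum_{\alpha=1}^N B_j^\alpha + o_H(1)$, where $u_\infty$ is a weak solution of \eqref{eq:coron} and each $B_j^\alpha$ is a rescaled Aubin--Talenti bubble modelled on $(1+|\cdot|^2)^{-(n-2k)/2}$ in geodesic normal coordinates at some $y_\alpha \in \overline{\Omega_M}$. Since each bubble contributes at least $c_1$ of energy, the strict bound $c < 2c_1$ forces $N \le 1$: either strong convergence or a single detaching bubble. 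This adaptation is the principal technical hurdle, because the lower-order tensorial terms $A_\ell(g)$ in \eqref{the op} and the manifold curvature must be tracked through the blow-up; but because the principal symbol of $P$ is still $\Delta_g^k$, the limiting Euclidean profile remains the classical extremizer.

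For the topological argument, take $\eta > 0$ small and form truncated Nehari-normalized bubbles $U_{y,\lambda}$ concentrated at $y \in M$; standard estimates give $I(U_{y,\lambda}) = c_1 + o(1)$ as $\lambda \to \infty$, uniformly for $y$ at fixed distance from $\partial \Omega_M$. Parametrize a test family $h_0 : \mathbb{S}_{x_0}(\iota_g/2) \times [0,1] \to H$ interpolating $(\xi, 0) \mapsto U_{\xi, \lambda_0}$ to $(\xi, 1) \mapsto 0$ with $\lambda_t \to \infty$ as $t \to 1$; for $\lambda_0$ large one arranges $\sup I(h_0) < 2c_1$. Set
$$c^* := \inf_{h \sim h_0}\; \max_{(\xi,t)} I(h(\xi,t)),$$
the infimum ranging over $h$ homotopic to $h_0$ rel $\{t = 1\}$. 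Introduce a barycenter map $\beta : \{0 < I \le c_1 + 2\eta\} \to M$, continuous for $\eta$ small, satisfying $\beta(U_{y,\lambda}) \to y$ as $\lambda \to \infty$ (defined via the center-of-mass of $|u|^{\crit}$ in local charts; continuity follows from the one-bubble decomposition of Step~2). The topological hypothesis forces $c^* > c_1$: otherwise a sequence of admissible $h_n$ with $\max I(h_n) \to c_1$ would eventually take values in the domain of $\beta$, and $\beta \circ h_n$ would contract $\mathbb{S}_{x_0}(\iota_g/2)$ in $M \setminus \{x_0\}$, contradicting the hypothesis. Therefore $c^* \in (c_1, 2c_1)$ is a critical value by Step~2, yielding a nontrivial critical point $u$; elliptic regularity upgrades it to $u \in C^{2k}(\Omega_M)$. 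When $G_P > 0$ on $\Omega_M$, applying the entire variational scheme to $|u|$ (a subsolution via a Kato-type inequality) and invoking the strong maximum principle encoded in the positivity of $G_P$ produces a positive solution.
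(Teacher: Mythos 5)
Your overall architecture (dichotomy at the threshold $c_1$, Struwe decomposition to get compactness in an energy window, concentrating bubbles near the sphere $\mathbb{S}_{x_0}(\iota_g/2)$, a barycenter map turning a low sup-level into a contraction of that sphere) is the same as the paper's, but two steps do not work as written.

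First, the Palais--Smale step is not established by your argument. The observation that ``each bubble contributes at least $c_1$'' only yields $N\le 1$, and a single detaching bubble is precisely a non-compact Palais--Smale sequence; nothing you wrote excludes it at levels strictly inside $(c_1,2c_1)$, so ``$c^*$ is a critical value by Step 2'' does not follow. To close the window one needs the exact energy quantization for the limiting equation $\Delta^k u=|u|^{\crit-2}u$: fixed-sign entire profiles carry energy exactly $c_1$, sign-changing ones carry at least $2c_1$, and half-space profiles with Dirichlet conditions do not exist (Pohozaev); this is Ge--Wei--Zhou \cite{gwz}, Lemmas 3 and 5, used in the paper's Proposition \ref{prop:ps:cond} together with the decomposition of \cite{mazumdar2}, and the whole step must be run under the contradiction hypothesis that \eqref{eq:coron} has no nontrivial solution so that the weak limit vanishes. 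Related structural problems: the generator $h_0$ of your minimax class is not continuous at $t=1$ in $H$ (Nehari-normalized bubbles converge only weakly to $0$), so ``homotopic to $h_0$ rel $\{t=1\}$'' is not well defined as stated --- the paper instead performs the homotopy at the level of the manifold-valued map $\Gamma\circ\beta\circ r_{\mathcal N}(v^\sigma_{t,R_0})$, whose continuity as $t\to\iota_g/2$ is exactly Proposition \ref{prop:center of mass}; the concentration behaviour of functions in a sublevel set cannot be read off the Palais--Smale decomposition (such functions are not PS sequences) and comes instead from the Lions-type concentration--compactness (Theorem \ref{th:CC}, Lemma \ref{lem:min}); and your barycenter lands a priori in $M$, where the sphere is always contractible --- one must retract its image onto $\overline{\Omega}_{\epsilon_0}$ (the paper uses the Whitney embedding, a tubular neighbourhood projection, and a further retraction of $M\setminus B_{x_0}(\epsilon_0/2)$ onto $\overline{\Omega}_{\epsilon_0}$) in order to contradict non-contractibility in $M\setminus\{x_0\}$.

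Second, the positivity argument is wrong for $k\ge 2$: there is no Kato-type inequality for polyharmonic operators, $|u|$ is not a subsolution (indeed the failure of the pointwise maximum principle is the central difficulty here), and positivity of $G_P$ is not a ``strong maximum principle'' one can apply to $|u|$. The paper obtains positive solutions by rerunning the entire Coron scheme for the functional with nonlinearity $(u^+)^{\crit}$ on $\mathcal N_+$ (Section \ref{sec:pos}): the Palais--Smale analysis is salvaged by solving the auxiliary problem $P\varphi_i=(u_i^+)^{\crit-1}$ and using the Green's representation with $G_P>0$ to get $\varphi_i\ge 0$, the test functions $v^\sigma_{t,R_0}$ are nonnegative so the same topological argument applies, and positivity of the resulting solution of $Pu=(u^+)^{\crit-1}$ again comes from $G_P>0$ (dual-cone type argument), not from a truncation of a sign-changing solution. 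You would need to adopt this (or an equivalent) mechanism; as proposed, the last step fails.
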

In the original result of Coron \cite{coron} (see also Weth and al. \cite{BWW} for the case $k=2$), the authors work with a smooth domain of $\R^n$ and assume that it has a small ``hole". In the context of a compact manifold, this assumption is not enough: indeed, the entire compact manifold minus a small hole might retract on a point. We discuss the example of the canonical sphere in Section \ref{sec:counterex}, where the existence of a hole is not sufficient to get solutions to \eqref{eq:P:f}.

\noindent
Concerning higher-order problems, we refer to Bartsch-Weth-Willem \cite{BWW}, Pucci-Serrin \cite{pucci-serrin}, Ge-Wei-Zhou \cite{gwz}, the general monograph Gazzola-Grunau-Sweers \cite{GGS} and the references therein.

\medskip\noindent Among other tools, the proof of Theorem \ref{main} uses a Lions-type Concentration Compactness Lemma adapted to the context of a Riemannian manifold: this will be the object of Theorem \ref{th:CC}. 

\medskip\noindent Equation \eqref{eq:P:f} has a variational structure. Since $P$ is self-adjoint in $L^2$, we have that for all $u,v \in C^{\infty}(M)$.
 \begin{align}\label{def:upv}
 \int \limits_{M}  u P(v)\, dv_{g}=  \int \limits_{M}  v P(u)\, dv_{g}=  \int \limits_{M}  \Delta_g^{k/2} u \Delta_g^{k/2} v \,{dv_g}  + \sum \limits_{l =0}^{k-1}  \int \limits_{M} A_{l}(g)(\nabla^{l} u,\nabla^{l} v) \,dv_{g}
 \end{align}
where
$$\Delta_{g}^{l/2} u:=\left\{\begin{array}{ll}
\Delta_g^m u&\hbox{ if }l=2m\hbox{ is even }\\
\nabla \Delta_g^m u&\hbox{ if }l=2m+1\hbox{ is odd }
\end{array}\right.$$
and, when $l=2m+1$ is odd, $\Delta_g^{k/2} u\,  \Delta_g^{k/2} v=\left(\nabla \Delta_g^m u,\nabla \Delta_g^m v\right)_g$. If $P$ is coercive and $f>0$, then, up to multiplying by a constant, any solution $u\in C^\infty(M)$ to \eqref{eq:P:f} is a critical point of the functional
 \begin{align}{\label{functional}}
u\mapsto \displaystyle{ J_P(u) := \frac{ \int \limits_{{M}} u P(u)\, dv_{g}  }{\left( ~ \int \limits_{{M}} f |u|^{  \crit}~ dv_{g} \right)^{2/  \crit}} }.
\end{align}
It follows from \eqref{def:upv} that $J_P$ makes sense in the Sobolev spaces $H_{k}^2({M})$, where for $1\leq l\leq k$, $H_l^2(M)$ which is the completion of $C^{\infty}(M)$ with respect to the $u\mapsto  \sum_{\alpha =0}^{l} \Vert \nabla^{\alpha} u\Vert_2$. Equivalently (see Robert \cite{robertgjms}), $H_{l}^2({M})$ is also  the completion of the space $C^{\infty}(M)$ with respect to the norm
\begin{equation}
 \left\| u\right\|_{H_{l}^2}^2 := 
 \sum \limits_{\alpha =0}^{l} \int \limits_{M}   (\Delta_g^{{\alpha}/2} u )^2 ~{dv_g}.
\end{equation}
By the Sobolev embedding theorem we get a continuous but not compact embedding of $H_{k}^{2}(M)$ into $L^{\crit}(M)$. The continuity of the embedding $H_{k}^{2}(M)\hookrightarrow L^{\crit}(M)$ yields a pair of real numbers $A,B $ such that for all $u\in H_{k}^{2}(M)$ 
\begin{align}{\label{inequality}}
\left\|u \right\|_{L^{\crit}}^{2}  \leq A  \int \limits_{M}   (\Delta_g^{{k}/2} u )^2 \, dv_g  + B \left\| u\right\|_{H_{k-1}^{2}}^{2}.
\end{align}
See for example Aubin \cite{aubinbook} or Hebey \cite{hebeycims}. Following the terminology introduced by Hebey, we then define
\begin{align}
\mathcal{A}(M):= \inf \{ A \in \R : \exists ~B \in  \R \ \text{with the property that inequality $(\ref{inequality})$ holds}\}.
\end{align}
As for the classical case $k=1$ (see Aubin \cite{aubinbook}), the value of ${\mathcal A}(M)$ depends only on $k$ and the dimension $n$. More precisely, we let $\mathscr{D}^{k,2}(\R^n)$ be the completion of $C^\infty_c(\R^n)$ for the norm $u\mapsto \Vert \Delta^{{k}/2} u\Vert_{2}$, and we define $K_0(n,k)>0$ 
\begin{equation}\label{sinq}
\frac{1}{K_0(n,k)} := \inf \limits_{u \in \mathscr{D}^{k,2}(\R^n)\backslash \{0\}} \frac{\int_{\R^n} ( \Delta^{k/2 }u )^2 \, dx}
 {\left(\int_{\R^n}|u|^{\crit}\, dx\right)^{\frac{2}{\crit}}}.
 \end{equation}
as the best constant in the Sobolev's continuous embedding $\mathscr{D}^{k,2}(\R^n) \hookrightarrow L^{\crit}(\R^n)$. Our second result is the following:
\begin{theorem}{\label{optimal inequality}}
Let $(M,g)$  be a  smooth, compact Riemannian manifold of dimension $n$ and  let $k$ be a positive integer such that $2k < {n}$. Then ${\mathcal A}(M)=K_0(n,k)>0$. In particular, for any $\epsilon>0$, there exists $B_{\epsilon} \in \R$ such that for all $u \in H_{k}^{2}(M)$ one has
\begin{align}\label{eq:eps}
\left(\int_M |u|^{\crit}\, dv_g\right)^{\frac{2}{\crit}} \leq (K_{0}(n,k) + \epsilon)  \int \limits_{M}   (\Delta_g^{{k}/2} u )^2 ~{dv_g}  + B _{\epsilon} \left\| u\right\|_{H_{k-1}^{2}}^{2}.
\end{align}
\end{theorem}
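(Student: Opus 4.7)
The plan is to establish $\mathcal{A}(M)=K_0(n,k)$ by proving the two matching bounds separately. For the lower bound $\mathcal{A}(M)\geq K_0(n,k)$, I would transplant a near-extremal family for the Euclidean inequality \eqref{sinq}, namely $u_\lambda=\lambda^{(n-2k)/2}U(\lambda\cdot)$ with $U$ an extremizer in $\mathscr{D}^{k,2}(\R^n)$, suitably cut off on a small Euclidean ball, onto $M$ via $\exp_{x_0}$ at some $x_0\in M$. In normal coordinates the metric satisfies $g_{ij}=\delta_{ij}+O(|x|^2)$, so $\|u_\lambda\|_{L^{\crit}(M)}^2$ and $\int_M(\Delta_g^{k/2}u_\lambda)^2\,dv_g$ both converge to their Euclidean counterparts and their ratio tends to $K_0(n,k)$ as $\lambda\to\infty$. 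Since $\|u_\lambda\|_{H_{k-1}^2(M)}^2$ scales like a strictly negative power of $\lambda$, inserting $u_\lambda$ into \eqref{inequality}, dividing by $\int_M(\Delta_g^{k/2}u_\lambda)^2\,dv_g$ and letting $\lambda\to\infty$ forces $A\geq K_0(n,k)$ for any admissible $A$.

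For the upper bound it suffices to prove \eqref{eq:eps} for each fixed $\epsilon>0$, which I would do by a partition-of-unity argument adapted to the polyharmonic setting. Fix $r>0$, to be chosen small later, and cover $M$ by finitely many exponential charts on balls $B_i$ of radius $r$, in which the metric satisfies $|g_{ij}-\delta_{ij}|=O(r^2)$ together with bounds on all derivatives of $g$ up to order $k$. Pick smooth $\eta_i$ supported in $B_i$ with $\sum_{i=1}^N\eta_i^2\equiv 1$ on $M$, and apply the triangle inequality in $L^{\crit/2}(M)$ (valid since $\crit/2>1$) to obtain
\begin{equation*}
\|u\|_{L^{\crit}(M)}^2 = \Bigl\|\sum_{i=1}^N|\eta_i u|^2\Bigr\|_{L^{\crit/2}(M)} \leq \sum_{i=1}^N\|\eta_i u\|_{L^{\crit}(M)}^2.
\end{equation*}

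For each $i$ the Euclidean inequality \eqref{sinq} applied to $\eta_i u$ in the chart, combined with the fact that the Euclidean and Riemannian $k$-th order integrands differ by a multiplicative factor $1+o_r(1)$ plus terms involving at most $k-1$ derivatives of $u$, yields
\begin{equation*}
\|\eta_i u\|_{L^{\crit}}^2 \leq (K_0(n,k)+o_r(1))\int_M(\Delta_g^{k/2}(\eta_i u))^2\,dv_g + C_r\|u\|_{H_{k-1}^2}^2.
\end{equation*}
Expanding $\Delta_g^{k/2}(\eta_i u)$ by Leibniz rule separates the leading term $\eta_i\Delta_g^{k/2}u$ from commutator terms in which at most $k-1$ derivatives of $u$ appear against derivatives of $\eta_i$; absorbing the cross term by Young's inequality with small parameter $\delta>0$ produces
\begin{equation*}
\int_M(\Delta_g^{k/2}(\eta_i u))^2\,dv_g \leq (1+\delta)\int_M\eta_i^2(\Delta_g^{k/2}u)^2\,dv_g + C_{\delta,\eta_i}\|u\|_{H_{k-1}^2}^2.
\end{equation*}
Summing over $i$ and using $\sum_i\eta_i^2=1$ collapses the leading term into $\int_M(\Delta_g^{k/2}u)^2\,dv_g$; choosing first $r$, then $\delta$, small enough that $(K_0(n,k)+o_r(1))(1+\delta)\leq K_0(n,k)+\epsilon$ completes the proof of \eqref{eq:eps} with a suitable $B_\epsilon$.

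The main obstacle will be the careful bookkeeping of the commutator terms generated by the Leibniz expansion of the order-$k$ operator $\Delta_g^{k/2}$, together with the residual error from comparing the Euclidean and Riemannian expressions of these integrands in the chart. Aubin's classical $k=1$ argument involves only one cross term and a single metric error, whereas for general $k$ a finite but growing family of intermediate terms appears; each of them must genuinely involve strictly fewer than $k$ derivatives of $u$ so that it may be absorbed into $B_\epsilon\|u\|_{H_{k-1}^2}^2$, with $B_\epsilon$ depending on $\epsilon$ through $r$, $\delta$, the cardinality of the cover and the $C^k$-norms of the $\eta_i$, but not on $u$.
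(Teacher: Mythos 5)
Your proposal is correct, and its overall architecture is the same as the paper's: the lower bound $\mathcal{A}(M)\geq K_0(n,k)$ by transplanting concentrated Euclidean test functions through a chart and letting the scale tend to infinity so that the $H_{k-1}^2$-remainder vanishes (the paper runs this with arbitrary $v\in C^\infty_c$ rescaled as $v(\lambda x)$ rather than a cut-off extremizer, which avoids any discussion of truncating the non-compactly supported extremal), and the upper bound \eqref{eq:eps} by a local inequality in small charts plus a partition of unity with $\sum_i\eta_i^2\equiv 1$ and the sublinearity of $\|\cdot\|_{L^{\crit/2}}$ on $\sum_i\eta_i^2u^2$. The one genuine point of divergence is the treatment of the dangerous cross term $\int_M \eta_i\,\Delta_g^{k/2}u\cdot\bigl(\Delta_g^{k/2}(\eta_iu)-\eta_i\Delta_g^{k/2}u\bigr)dv_g$, which contains $k$ derivatives of $u$: you absorb it by Young's inequality at the price of a factor $(1+\delta)$ on the leading term, then compensate by shrinking the chart radius and $\delta$, which is perfectly legitimate since the theorem only requires the constant $K_0(n,k)+\epsilon$; the paper instead exploits the specific structure of the partition (its $\sqrt{\eta_i}$ corresponds to your $\eta_i$), so that the worst commutator term carries the factor $\sqrt{\eta_i}\,\partial\sqrt{\eta_i}=\tfrac12\partial\eta_i$ and vanishes identically after summation because $\sum_i\eta_i=1$, while the remaining cross terms are lowered by integration by parts. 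The paper's cancellation buys the cleaner intermediate claim \eqref{main claim}, with constant exactly $1$ in front of $\int_M(\Delta_g^{k/2}u)^2dv_g$ and no multiplicative loss, whereas your route is somewhat lighter on bookkeeping; both yield \eqref{eq:eps}. Do make sure, in your local comparison step, that the metric-error terms which still contain $k$ derivatives of $u$ (with small coefficients on a small chart) are absorbed multiplicatively into the leading term via $\int|\nabla^k v|^2\,dx\leq C\int(\Delta^{k/2}v)^2\,dx$ for compactly supported $v$, as in the paper's Lemma \ref{normcomparison}, and not dumped into $B_\epsilon\|u\|_{H_{k-1}^2}^2$.
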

As a consequence of this result, we will be able to prove the existence of solutions to \eqref{eq:P:f} when the functional $J_P$ goes below a quantified threshold, see Theorem \ref{th:min}.

\medskip\noindent This paper is organized as follows. In Section \ref{sec:best}, we study the best-constant problem and prove Theorem \ref{optimal inequality}. In Section \ref{sec:min}, we prove Theorem \ref{th:min} by classical minimizing method. In Section \ref{CC}, we prove a Concentration-Compactness Lemma in the spirit of Lions. Section \ref{sec:coron} is devoted to test-functions estimates and the proof of the existence of solutions to \eqref{eq:coron} via a Coron-type topological method. Section \ref{sec:pos} deals with positive solutions,  and Section \ref{sec:counterex} with the necessity of the topological assumption of Theorem \ref{main}. The appendices concern regularity and a general comparison between geometric norms. 

\subsection*{Acknowledgements}

I would like to express my deep gratitude to Professor Fr\'ed\'eric Robert  and Professor Dong Ye, my thesis  supervisors, for their patient guidance, enthusiastic encouragement and useful critiques of this work.





\section{The Best Constant}\label{sec:best}

\noindent
It follows from Lions {\cite{PLL}} and Swanson {\cite{swanson}} that the extremal functions for the Sobolev inequality \eqref{sinq} exist and are exactly multiples of the functions
\begin{align}
U_{a,\lambda}=  \alpha_{n,k}\left( \frac{\lambda}{1+ \lambda^{2}|x-a|^{2}} \right)^{\frac{n-2k}{2}}~ a \in \R^{n}, \lambda >0
\end{align}
where the choice of $\alpha_{n,k}$'s are such that for all $\lam$, $ \left\|  U_{a,\lambda} \right\|_{\crit}=1$ and   
$\left\| U_{a,\lambda} \right\|_{\mathscr{D}^{k,2}}^{2}=\frac{ 1 }{K_{0}(n,k)} $.
They satisfies  the equation $\Delta^{k} u = \frac{1}{K_{0}(n,k)}\left| u\right|^{2_{k}^{\sharp}-2} u$ in $\rn$

\medskip\noindent
Next we consider the case of a compact Riemmanian manifold. The first result we have in this direction is the following.
\begin{lemma}{\label{a b lemma}}
Let $(M,g)$  be a  smooth, compact Riemannian manifold of dimension $n$ and  let $k$ be a positve integer such that $2k < {n}$. Any constant $A$ in inequality $(\ref{inequality})$ has to be greater than or equal to $K_{0}(n,k)$, whatever the  constant $B$ be.
\end{lemma}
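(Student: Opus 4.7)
The plan is to test the Sobolev inequality \eqref{inequality} against a concentrating family of truncated Euclidean extremals and let the concentration parameter tend to $+\infty$. Fix any $x_{0}\in M$ and a radius $r\in (0,\iota_{g})$, and choose a cutoff $\eta\in C^{\infty}_{c}(B_{x_{0}}(r))$ with $\eta\equiv 1$ on $B_{x_{0}}(r/2)$. Using the exponential chart $\exp_{x_{0}}\colon B_{0}(r)\subset T_{x_{0}}M\to B_{x_{0}}(r)$, define for $\lambda>0$
\[
u_{\lambda}(x)=\eta(x)\,U_{0,\lambda}\!\bigl(\exp_{x_{0}}^{-1}(x)\bigr),\qquad x\in M,
\]
extended by zero outside $B_{x_{0}}(r)$. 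Each $u_{\lambda}$ belongs to $H^{2}_{k}(M)$, and the strategy is to prove that as $\lambda\to+\infty$
\[
\int_{M}|u_{\lambda}|^{\crit}\,dv_{g}\to 1,\qquad \int_{M}\bigl(\Delta_{g}^{k/2}u_{\lambda}\bigr)^{2}\,dv_{g}\to\frac{1}{K_{0}(n,k)},\qquad \|u_{\lambda}\|_{H^{2}_{k-1}}^{2}\to 0.
\]
Plugging these asymptotics into \eqref{inequality} yields $1\le A/K_{0}(n,k)$, which is the claim, independently of the value of $B$.

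The three limits would be established in normal coordinates at $x_{0}$, where $g_{ij}(x)=\delta_{ij}+O(|x|^{2})$, $\sqrt{|g|}=1+O(|x|^{2})$, and each coefficient of $\Delta_{g}^{k/2}$ differs from its Euclidean analogue by a smooth error that vanishes at the origin. After the rescaling $y=\lambda x$, the mass of $U_{0,\lambda}$ concentrates at the origin, so these Riemannian--Euclidean discrepancies become negligible on the effective support. The $L^{\crit}$ integral then converges to $\int_{\R^{n}}U_{0,1}^{\crit}\,dy=1$ and the top-order term to $\int_{\R^{n}}(\Delta^{k/2}U_{0,1})^{2}\,dy=1/K_{0}(n,k)$ by the normalizations recalled right before the lemma. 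For the lower-order norms, writing $V(y):=(1+|y|^{2})^{-(n-2k)/2}$, a direct change of variable gives
\[
\int_{\R^{n}}|\nabla^{l}U_{0,\lambda}|^{2}\,dx=\alpha_{n,k}^{2}\,\lambda^{2l-2k}\int_{\R^{n}}|\nabla^{l}V|^{2}\,dy,\qquad 0\le l<k,
\]
and when the integral on the right diverges, its truncated version on $B_{0}(\lambda r)$ grows at most like $\lambda^{4k-n-2l}$, leading to a total factor $\lambda^{2k-n}\to 0$ under the assumption $2k<n$. Cross terms produced when derivatives fall on $\eta$ are supported in the annulus $\{r/2\le|\exp_{x_{0}}^{-1}(x)|\le r\}$, where $U_{0,\lambda}$ and all its derivatives are of order $\lambda^{-(n-2k)/2}$ uniformly, hence contribute $o(1)$.

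The main technical obstacle lies in the second limit, that is, showing $\int_{M}(\Delta_{g}^{k/2}u_{\lambda})^{2}\,dv_{g}\to 1/K_{0}(n,k)$. This requires expanding $\Delta_{g}^{k}$ (or $\nabla\Delta_{g}^{(k-1)/2}$ if $k$ is odd) in normal coordinates, separating the Euclidean leading part from the perturbative tail whose coefficients vanish at $x_{0}$, and controlling the latter after the rescaling $y=\lambda x$; a comparison between the Riemannian and Euclidean $H^{2}_{k}$ norms on small geodesic balls, of the kind announced in the appendix on geometric norms, is the natural device. Once the three limits are in hand, evaluating \eqref{inequality} on $u_{\lambda}$ and sending $\lambda\to+\infty$ gives $A\ge K_{0}(n,k)$, as desired.
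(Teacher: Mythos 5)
Your argument is correct, but it follows a genuinely different route from the paper's. You test \eqref{inequality} against the concentrating truncated extremals $\eta\,U_{0,\lambda}\circ\exp_{x_0}^{-1}$ and compute the limits of the three terms (the $L^{\crit}$ norm tending to $1$, the top-order energy to $1/K_0(n,k)$, the $H^2_{k-1}$ norm to $0$), which forces $A\geq K_0(n,k)$; this requires knowing the explicit extremals of \eqref{sinq}, cutting them off, and controlling their tails, and you correctly handle the delicate point that the lower-order Euclidean integrals $\int|\nabla^l V|^2$ may diverge when $n\leq 4k-2l$, where the truncation gives the compensating factor $\lambda^{4k-n-2l}$ and an overall $\lambda^{2k-n}\to 0$. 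The paper instead transplants an \emph{arbitrary} rescaled test function $v(\lambda\cdot)$, $v\in C^\infty_c(\R^n)$, into a small chart, uses the two-sided comparison of Lemma \ref{normcomparison} to pass to Euclidean integrals up to factors $1\pm\epsilon$, multiplies by $\lambda^{n-2k}$ and lets $\lambda\to\infty$ so that all lower-order terms (which are finite, by compact support) disappear by pure scaling; this yields the Euclidean Sobolev inequality with constant $\frac{1+\epsilon}{1-\epsilon}A$ for all $v\in\mathscr{D}^{k,2}(\R^n)$, and the conclusion follows from the variational definition of $K_0(n,k)$ rather than from extremals. The paper's route is lighter: no bubbles, no cutoff errors, no divergent tail estimates, and no need for the exact top-order limit, which in your write-up is the one step left as a sketch. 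That step can be closed in two ways: either by the bubble-energy computation the paper carries out later for other purposes (Proposition \ref{bubble energy}), or more cheaply by noting that you do not need the exact limit at all — the $(1\pm\epsilon_0)$ bounds of Lemma \ref{normcomparison} (after shrinking the cutoff radius $r$ below the $\tau(\epsilon_0)$ it provides) already give $A\geq K_0(n,k)/(1+\epsilon_0)$ up to $o(1)$ terms, and letting $\epsilon_0\to 0$ finishes the proof; what your approach buys in exchange is that it exhibits an explicit near-extremizing family on the manifold, which is exactly the ingredient reused in the Coron-type construction.
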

\noindent{\it Proof of Lemma \ref{a b lemma}:} We fix $\epsilon>0$ small. It follows from Lemma {\ref{normcomparison}} that there exists, $\delta_{0} \in( 0,\iota_{g})$ depending only on $(M,g), \epsilon$, where $\iota_{g}$ is the injectivity radius of $M$, such that for any point $p \in M$, any $0 < \delta< \delta_{0}$, $ l \leq k$  
 and $u \in C_c^\infty(B_{0}(\delta))$
   \begin{align}
 \int \limits_{M}  ( \Delta^{l/2}_{{g}} (u \circ exp_{p}^{-1}) )_{g}^2  ~{dv_{g}}
  \leq  {(1+\epsilon)} \int \limits_{\R^n} ( \Delta^{l/2} u )^2 ~{dx}  \qquad 
  \end{align}
  and
  \begin{align}
 (1-\epsilon) \left(~\int \limits_{\R^{n}} |u|^{2_{k}^{\sharp}} ~ {dx} \right)^{2/{2_{k}^{\sharp}}} \leq  \left( \int \limits_{M} |u \circ exp_{p}^{-1}|^{2_{k}^{\sharp}} ~ {dv_{g}} \right)^{2/{2_{k}^{\sharp}}}
 \end{align}
 \medskip
 
 \noindent
 Then plugging the above inequalities into \eqref{inequality} we obtain that any $u \in C_c^\infty\left( B_{0}(\delta)\right)$ satisfies 
 \begin{align}
\displaystyle  \left(\int \limits_{\R^{n}} |u|^{2_{k}^{\sharp}} ~ {dx} \right)^{2/{2_{k}^{\sharp}}} \leq   \frac{1+\epsilon}{ 1-\epsilon} A  \int \limits_{\R^{n}}   ( \Delta^{{k}/2} u )^2 ~{dx}  + C_\epsilon \sum_{l=0}^{k-1}\int_{\R^n}|\nabla^l u|^2\, dx\label{ineq:sobo:1}.
\end{align}
 \medskip

 \noindent
 Let $v \in C^\infty_c\left( \R^{n}\right) $ with $supp(v) \subset B_{0}(R_{0})$. For $\lambda >1$ let $v_{\lambda}= v(\lambda x )$. Then for $\lambda$ large, $supp(v_{\lambda}) \subset B_{0}(\delta) $. Taking $u\equiv v_\lambda$ in \eqref{ineq:sobo:1}, a change of variable yields

  \begin{align}
 \displaystyle   \frac{1}{\lambda^{n-2k}}\left(~\int \limits_{\R^{n}} |v|^{2_{k}^{\sharp}} ~ {dx} \right)^{2/{2_{k}^{\sharp}}} \leq \frac{1+\epsilon}{ 1-\epsilon} \cdot
   \frac{ A}{\lambda^{n-2k}}  \int \limits_{\R^{n}}   ( \Delta^{{k}/2} v )^2 ~{dx}  + C_\epsilon \sum_{l=0}^{k-1}\frac{1}{\lambda^{n-2l}}\int_{\R^n}|\nabla^l v|^2\, dx.
 \end{align}
 Multiplying by $\lambda^{n-2k}$ and letting $\lambda\to +\infty$, we get that for all $v \in \mathscr{D}^{k,2}(\R^{n})$, we have
  \begin{align}
 \displaystyle  \left(~\int \limits_{\R^{n}} |v|^{\crit} ~ {dx} \right)^{2/{\crit}} \leq 
   \frac{1+\epsilon}{ 1-\epsilon} A  \int \limits_{\R^{n}}   ( \Delta^{{k}/2} v )^2 ~{dx}.
 \end{align}
 Therefore $ \frac{1+\epsilon}{ 1-\epsilon} A \geq K_0(n,k)$ for all $\epsilon>0$, and letting $\epsilon\to 0$ yields $A\geq K_0(n,k)$. This ends the proof of Lemma \ref{a b lemma}.\hfill$\Box$
  
\noindent We now prove \eqref{eq:eps} to get Theorem \ref{optimal inequality}. 

\medskip\noindent{\bf Step 1: A local inequality.} From a  result of Anderson ({\emph{Main lemma $2.2$}} of  \cite{anderson})   it follows  that  for  any point  $p \in M $ there exists a harmonic  coordinate chart  $\varphi$ around $p$. Then from   Lemma {\ref{normcomparison}}, for any $ 0<  \epsilon <1 $, there exists $\tau  >0$ small  enough such that for any point $p \in M $ and  for any $u \in C_c^\infty\left( B_{p}(\tau)\right)$, one has
\begin{align}
 \int \limits_{\R^n} ( \Delta^{k/2} (u \circ \varphi^{-1}) )^2 ~{dx}  
  \leq  
 {\left(1+\frac{\epsilon}{3K_0(n,k) }\right)}  \int \limits_{M}  (\Delta^{k/2}_{{g}} u  )^2  ~{dv_{g}}
  \end{align}
  and
  \begin{align}
  \left(~\int \limits_{M} |u|^{2_{k}^{\sharp}} ~ {dv_{g}} \right)^{2/{2_{k}^{\sharp}}} \leq   {\left(1+\frac{\epsilon}{3K_0(n,k) }\right)} \left( ~\int \limits_{\R^{n}} |u \circ \varphi^{-1}|^{2_{k}^{\sharp}} ~ {dx} \right)^{2/{2_{k}^{\sharp}}}.
 \end{align}
The expression for the Laplacian $\Delta_{g}$ in the harmonic coordinates is $\Delta_{g}u= - g_{ij} \partial_{ij} u$. 


\noindent
Then \eqref{sinq} implies that for any $u \in C_c^\infty\left( B_{p}(\tau)\right)$

\noindent
\begin{align}{\label{loc}}
 \left( ~\int \limits_{M} |u |^{2_{k}^{\sharp}} ~ {dv_{g}} \right)^{2/{2_{k}^{\sharp}}}  \leq 
 (K_0(n,k)  +\epsilon)   \int \limits_{M}  ( \Delta^{k/2}_{{g}} u  )^2  ~{dv_{g}}.
\end{align}
\medskip

\medskip\noindent{\bf Step 2: Finite covering and proof of the global inequality.} Since $M$ is compact, it can be covered by a finite number of  balls $B_{p_{i}}(\tau/2)$, $i=1,\ldots,N$.
Let $ \alpha_{i} \in C^\infty_c(B_{p_{i}}(\tau))$ be such that $0 \leq \alpha_{i} \leq 1$ and $\alpha_{i}=1$ in $B_{p_{i}}(\tau/2)$. We set

\begin{align}
\displaystyle{\eta_{i}= \frac{\alpha_{i}^{2}}{ \sum \limits_{j=1}^{N} \alpha_{j}^{2}}   }.
\end{align}
Then $\left( \eta_{i} \right)_{{i=1,\ldots,N}}$ is a partition of unity subordinate to the cover $\left( B_{p_{i}}(\tau)\right)_{{i=1,\ldots,N}}$ such that $\sqrt{\eta_{i}}$'s are smooth and $\sum \limits_{i=1}^{N} \eta_{i} = 1$. In the sequel, $C$ denote any positive constant depending on $k,n$, the metric $g$ on $M$ and  the functions
$\left(  \eta_{i}\right)_{i=1,\ldots,N}$. Now for any $u \in C^{\infty}(M)$, we have
\begin{align}
\|u \|_{{2_{k}^{\sharp}}}^{2} = \|u^{2} \|_{{{2_{k}^{\sharp}}/2} } =  \left\| \sum \limits_{i=1}^{N} \eta_{i} u^{2} \right \|_{{{2_{k}^{\sharp}}/2} }  \leq 
\sum \limits_{i=1}^{N} \left\| \eta_{i} u^{2} \right\|_{{{2_{k}^{\sharp}}/2} } =  \sum \limits_{i=1}^{N} \left\| \sqrt{\eta_{i}} u\right\|_{{2_{k}^{\sharp}}}^{2}.
\end{align}
\medskip

\noindent
So for any $u \in C^{\infty}(M)$, using inequality $( \ref{loc} ) $ we obtain that
\begin{align}{\label{semi-local}}
 \left( ~\int \limits_{M} |u |^{2_{k}^{\sharp}} ~ {dv_{g}} \right)^{2/{2_{k}^{\sharp}}}  \leq 
(K_0(n,k)  +\epsilon)\sum \limits_{i=1}^{N}  \int \limits_{M}  ( \Delta^{k/2}_{{g}} (\sqrt{\eta_{i}} u) )_{g}^2  ~{dv_{g}}.
\end{align}
\medskip\noindent
Next we claim that there exists $C>0$ such that
\begin{align}{\label{main claim}}
\sum \limits_{i=1}^{N}  \int \limits_{M}  ( \Delta^{k/2}_{{g}} (\sqrt{\eta_{i}} u) )^2  ~{dv_{g}} \leq
  \int \limits_{M}  (\Delta^{k/2}_{{g}}u )^2  ~{dv_{g}} + C \left\| u\right\|_{H_{k-1}^{2}}^{2}.
\end{align}
\medskip

\noindent
Assuming that $(\ref{main claim})$ holds we have from $( \ref{semi-local})$
\begin{align}
 \left( ~\int \limits_{M} |u |^{2_{k}^{\sharp}} ~ {dv_{g}} \right)^{2/{2_{k}^{\sharp}}}  \leq 
(K_0(n,k)  +\epsilon)  \int \limits_{M}  (\Delta^{k/2}_{{g}}u )_{g}^2  ~{dv_{g}}
 + (K_0(n,k)  +\epsilon)   C \left\| u\right\|_{H_{k-1}^{2}}^{2}.
\end{align}
This proves \eqref{eq:eps}, and therefore, with Lemma \ref{a b lemma}, this proves Theorem \ref{inequality}. 
\medskip\noindent We are now left with proving \eqref{main claim}.

\medskip \noindent{\bf Step 3: Proof of \eqref{main claim}:} 
For any positive integer $m$, one can write that
\begin{align}
\Delta_{g}^{m} (\sqrt{\eta_{i}} u) = \sqrt{\eta_{i}} \Delta_{g}^{m}u + \mathcal{P}_{g}^{(2m-1,1)}(u, \sqrt{\eta_{i}}) + \mathcal{L}_{\sqrt{\eta_{i}},g}^{2m-2}(u)
\end{align}
where
\begin{align}
 \mathcal{P}_{g}^{(2m-1,1)}(u, \sqrt{\eta_{i}}) = \sum \limits_{|l|=2m-1,|\beta|=1} (a_{l,\beta}\partial_{\beta} \sqrt{\eta_{i}}) \nabla^{l} u,\hbox{ and }\displaystyle{  \mathcal{L}_{\sqrt{\eta_{i}},  g}^{2m-2}(u) = \sum \limits_{|l|=0}^{2m-2} a_{l}(\sqrt{\eta_{i}}) ~ \nabla^{l}u }
\end{align}
 the coefficients $a_{l,\beta}$ and $ a_{l}(\sqrt{\eta_{i}})$ are smooth functions on $M$. The $a_{l,\beta}$'s  depends  only on  the metric $g$ and  on the manifold $M$ and $ a_{l}(\sqrt{\eta_{i}})$'s  depends  both on the metric $g$,  the function $\sqrt{\eta_{i}}$ and its derivatives upto order $2m$.
We shall use the same notations  $\mathcal{P}_{g}^{(2m-1,1)}(u, \sqrt{\eta_{i}})$, $\mathcal{L}_{\sqrt{\eta_{i}},g}^{2m-2}(u)$ for any expression of the above form.

\medskip\noindent{\bf Step 3.1: $k$ is even.} We then write $k=2m$, $m\geq 1$, and then
\begin{eqnarray}
&& \sum \limits_{i=1}^{N}  \int \limits_{M}  \left( \Delta^{m}_{{g}} (\sqrt{\eta_{i}} u) \right)^2  ~{dv_{g}} = \sum \limits_{i=1}^{N}  \int \limits_{M}  {\eta_{i}} \left( \Delta^{m}_{{g}}  u \right)^2  ~{dv_{g}}  \notag \\
&& +\sum \limits_{i=1}^{N} \int_{M} \left(\mathcal{P}_{g}^{(2m-1,1)}(u, \sqrt{\eta_{i}}) \right)^{2} ~ dv_{g} \notag  + \sum \limits_{i=1}^{N} \int_{M} \left( \mathcal{L}_{\sqrt{\eta_{i}},  g}^{2m-2}(u) \right)^{2} ~ dv_{g} \notag \\
&& + 2 \sum \limits_{i=1}^{N} \int_{M} \sqrt{\eta_{i}} \Delta^{m}_{g}u ~\mathcal{P}_{g}^{(2m-1,1)}(u, \sqrt{\eta_{i}})~ dv_{g} + 2 \sum \limits_{i=1}^{N} \int_{M}\sqrt{\eta_{i}} \Delta^{m}_{g}u ~ \mathcal{L}_{\sqrt{\eta_{i}},  g}^{2m-2}(u)~dv_{g}\notag \\
&& + 2 \sum \limits_{i=1}^{N} \int_{M} \mathcal{P}_{g}^{(2m-1,1)}(u, \sqrt{\eta_{i}})~ \mathcal{L}_{\sqrt{\eta_{i}},  g}^{2m-2}(u)~ dv_{g} 
\end{eqnarray}
\noindent
We note that
\begin{align}
\sum \limits_{i=1}^{N} \int_{M} \left(\mathcal{P}_{g}^{(2m-1,1)}(u, \sqrt{\eta_{i}}) \right)^{2} ~ dv_{g} \leq  C\left\| u\right\|_{H_{2m-1}^{2}}^{2}.
~\text{ and }~
\sum \limits_{i=1}^{N} \int_{M} \left( \mathcal{L}_{\sqrt{\eta_{i}},  g}^{2m-2}(u) \right)^{2} ~ dv_{g}  \leq C\left\| u\right\|_{H_{2m-2}^{2}}^{2}.
\end{align}
\noindent
On the other hand 
\begin{align}
&\sum \limits_{i=1}^{N} \int_{M} \sqrt{\eta_{i}} \Delta^{m}_{g}u ~\mathcal{P}_{g}^{(2m-1,1)}(u, \sqrt{\eta_{i}})~ dv_{g}  =
\sum \limits_{i=1}^{N}   ~ \sum \limits_{|l|=2m-1} \sum \limits_{|\beta|=1} \int_{M} ( \sqrt{\eta_{i}} \Delta^{m}_{g}u)  (  (a_{l,\beta}\partial_{\beta} \sqrt{\eta_{i}}) \nabla^{l} u )  ~ dv_{g}   \notag \\
&= \frac{1}{2} \sum \limits_{i=1}^{N}   ~ \sum \limits_{|l|=2m-1} \sum \limits_{|\beta|=1}  \int_{M} ( \Delta^{m}_{g}u)    ((a_{l,\beta}\partial_{\beta} {\eta_{i}}) \nabla^{l} u )  ~ dv_{g} \notag \\
&  = \frac{1}{2}  \sum \limits_{|l|=2m-1} \sum \limits_{|\beta|=1}  \int_{M} ( \Delta^{m}_{g}u)  (  (a_{l,\beta} ~ \partial_{\beta} ( \sum \limits_{i=1}^{N} {\eta_{i}})) \nabla^{l} u )  ~ dv_{g}  =0    \notag \\
\end{align}

\noindent
while using the integration by parts formula  we obtain 
\begin{align}
 \sum \limits_{i=1}^{N} \int_{M}\sqrt{\eta_{i}} \Delta^{m}_{g}u ~ \mathcal{L}_{\sqrt{\eta_{i}},  g}^{2m-2}(u)~dv_{g} \leq C \left\| u\right\|_{H_{2m-1}^{2}}^{2}
\end{align}

\noindent
and by H\"older inequality
\begin{align}
\sum \limits_{i=1}^{N} \int_{M} \mathcal{P}_{g}^{(2m-1,1)}(u, \sqrt{\eta_{i}})~ \mathcal{L}_{\sqrt{\eta_{i}},  g}^{2m-2}(u)~ dv_{g} \leq C\left\| u\right\|_{H_{2m-1}^{2}}^{2}
\end{align} 
Hence if $k$ is even, then
\begin{align}
 \sum \limits_{i=1}^{N}  \int \limits_{M}  \left( \Delta^{m}_{{g}} (\sqrt{\eta_{i}} u) \right)^2  ~{dv_{g}} \leq  \int \limits_{M}  \left( \Delta^{m}_{{g}}  u \right)^2  ~{dv_{g}} + C\left\| u\right\|_{H_{2m-1}^{2}}^{2}.
\end{align} 
So we have the claim for $k$ even.

\medskip\noindent{\bf Step 3.2: $k$ is odd.} We then write $k= 2m+1$ with $m\geq 0$. We have
\begin{align}
\nabla \left( \Delta_{g}^{m} (\sqrt{\eta_{i}} u) \right) = \sqrt{\eta_{i}} ~ \nabla \left( \Delta_{g}^{m}u  \right)+ (\Delta_{g}^{m} u) ~\nabla \sqrt{\eta_{i}} + \nabla \left( \mathcal{P}_{g}^{(2m-1,1)}(u, \sqrt{\eta_{i}}) \right) +  \nabla \left(  \mathcal{L}_{\sqrt{\eta_{i}},g}^{2m-2}(u) \right)
\end{align}
and so 
\begin{align}
&\sum \limits_{i=1}^{N}  \int \limits_{M} \left| \nabla \left( \Delta_{g}^{m} (\sqrt{\eta_{i}} u) \right)  \right|^{2} dv_{g} =  \sum \limits_{i=1}^{N}  \int \limits_{M} \eta_{i} \left| \nabla \left( \Delta_{g}^{m}u  \right)  \right|^{2} ~dv_{g}  +\sum \limits_{i=1}^{N}  \int \limits_{M} (\Delta_{g}^{m} u)^{2} \left| \nabla \sqrt{\eta_{i}}  \right|^{2} ~dv_{g} \\
 &  +\sum \limits_{i=1}^{N}  \int \limits_{M} \left|  \nabla \left( \mathcal{P}_{g}^{(2m-1,1)}(u, \sqrt{\eta_{i}}) \right) \right|^{2}~ dv_{g}  +\sum \limits_{i=1}^{N}  \int \limits_{M} \left| \nabla \left(  \mathcal{L}_{\sqrt{\eta_{i}},g}^{2m-2}(u) \right) \right|^{2}~ dv_{g} \notag \\ 
&  +2\sum \limits_{i=1}^{N}  \int \limits_{M} (\sqrt{\eta_{i}} ~ \nabla \left( \Delta_{g}^{m}u  \right), (\Delta_{g}^{m} u) ~\nabla\sqrt{\eta_{i}}~)~ dv_{g}  +2\sum \limits_{i=1}^{N}  \int \limits_{M} (\sqrt{\eta_{i}} ~ \nabla \left( \Delta_{g}^{m}u  \right),\nabla \left( \mathcal{P}_{g}^{(2m-1,1)}(u, \sqrt{\eta_{i}}) \right))~ dv_{g}  \notag \\ 
 & +2\sum \limits_{i=1}^{N}  \int \limits_{M} (\sqrt{\eta_{i}} ~ \nabla \left( \Delta_{g}^{m}u  \right),  \nabla \left(  \mathcal{L}_{\sqrt{\eta_{i}},g}^{2m-2}(u) \right))~ dv_{g}  
  +2\sum \limits_{i=1}^{N}  \int \limits_{M} ((\Delta_{g}^{m} u) ~\nabla \sqrt{\eta_{i}},\nabla \left( \mathcal{P}_{g}^{(2m-1,1)}(u, \sqrt{\eta_{i}}) \right))~ dv_{g}  \notag \\ 
 & +2\sum \limits_{i=1}^{N}  \int \limits_{M} ((\Delta_{g}^{m} u) ~\nabla \sqrt{\eta_{i}}, \nabla \left(  \mathcal{L}_{\sqrt{\eta_{i}},g}^{2m-2}(u) \right))~ dv_{g}  +2\sum \limits_{i=1}^{N}  \int \limits_{M} (\nabla \left( \mathcal{P}_{g}^{(2m-1,1)}(u, \sqrt{\eta_{i}}) \right), \nabla  \left(  \mathcal{L}_{\sqrt{\eta_{i}},g}^{2m-2}(u) \right)) ~dv_{g}  
\end{align}
\medskip

\noindent
We have that
\begin{align}
\sum \limits_{i=1}^{N}  \int \limits_{M} \left|  \nabla \left( \mathcal{P}_{g}^{(2m-1,1)}(u, \sqrt{\eta_{i}}) \right) \right|^{2}~ dv_{g} 
\leq  C \left\| u\right\|_{H_{2m}^{2}}^{2} 
~ \text{ and }
\sum \limits_{i=1}^{N}  \int \limits_{M} \left| \nabla \left(  \mathcal{L}_{\sqrt{\eta_{i}},g}^{2m-2}(u) \right) \right|^{2}~ dv_{g}  
\leq  C \left\| u\right\|_{H_{2m-1}^{2}}^{2} 
\end{align}

while
\begin{align}
& \sum \limits_{i=1}^{N}  \int \limits_{M} (\sqrt{\eta_{i}} ~ \nabla \left( \Delta_{g}^{m}u  \right), (\Delta_{g}^{m} u) ~\nabla\sqrt{\eta_{i}})~ dv_{g}  
=   \sum \limits_{i=1}^{N}  \int \limits_{M} (  \nabla \left( \Delta_{g}^{m}u  \right), (\Delta_{g}^{m} u) ~(  \sqrt{\eta_{i}}~ \nabla \sqrt{\eta_{i}}) )~ dv_{g} \notag \\
 &= \frac{1}{2}  \sum \limits_{i=1}^{N}  \int \limits_{M} (  \nabla \left( \Delta_{g}^{m}u  \right), (\Delta_{g}^{m} u) ~ \nabla {\eta_{i}} )~ dv_{g}=  \frac{1}{2}   \int \limits_{M} (  \nabla \left( \Delta_{g}^{m}u  \right), (\Delta_{g}^{m} u) ~ \nabla (  \sum \limits_{i=1}^{N}  {\eta_{i}}) )~ dv_{g} =0 
\end{align}

\noindent
And we obtain 
\begin{align}
& \left|  \sum \limits_{i=1}^{N}  \int \limits_{M} (\sqrt{\eta_{i}} ~ \nabla \left( \Delta_{g}^{m}u  \right),\nabla \left( \mathcal{P}_{g}^{(2m-1,1)}(u, \sqrt{\eta_{i}}) \right))~ dv_{g}  \right| \notag \\
=& \left|  \sum \limits_{i=1}^{N} ~   \sum \limits_{|l|=2m-1} \sum \limits_{|\beta|=1}    \int \limits_{M} (\sqrt{\eta_{i}} ~ \nabla\left( \Delta_{g}^{m}u  \right),\nabla \left(  (a_{l,\beta}\partial_{\beta} \sqrt{\eta_{i}}) \nabla^{l} u   \right))~ dv_{g}  \right| \notag \\
 \leq &  \left| \sum \limits_{i=1}^{N} ~   \sum \limits_{|l|=2m} \sum \limits_{|\beta|=1}    \int \limits_{M} (\sqrt{\eta_{i}} ~ \nabla \left( \Delta_{g}^{m}u  \right),  (a_{l,\beta}\partial_{\beta} \sqrt{\eta_{i}}) \nabla^{l} u  )~ dv_{g} \right|  \notag\\ 
 & +  \left|  \sum \limits_{i=1}^{N}  ~  \sum \limits_{|l|=2m-1} \sum \limits_{|\beta|=1}    \int \limits_{M} (  \nabla \left( \Delta_{g}^{m}u  \right), \left( \sqrt{\eta_{i}} ~\nabla    (a_{l,\beta}\partial_{\beta} \sqrt{\eta_{i}}) \right) \nabla^{l} u    ) ~dv_{g}  \right|
  \notag\\
   \leq &  \left| \sum \limits_{i=1}^{N} ~   \sum \limits_{|l|=2m} \sum \limits_{|\beta|=1}    \int \limits_{M} (\sqrt{\eta_{i}} ~ \nabla\left( \Delta_{g}^{m}u  \right),  (a_{l,\beta}\partial_{\beta} \sqrt{\eta_{i}}) \nabla^{l} u  )~ dv_{g} \right|  \notag\\ 
 & +   \sum \limits_{i=1}^{N}  \left|   \sum \limits_{|l|=2m-1} \sum \limits_{|\beta|=1}    \int \limits_{B_{p_{i}} (\tau)} (  \nabla \left( \Delta_{g}^{m}u  \right), \left( \sqrt{\eta_{i}} ~\nabla    (a_{l,\beta}\partial_{\beta} \sqrt{\eta_{i}}) \right) \nabla^{l} u    ) ~dv_{g}  \right|
 \end{align}
Then we apply  the  integration by parts  formula on each of the domains $\varphi^{-1}(B_{p_{1}}(\tau)) \subset \R^{n} $ to obtain 
 \begin{align}
  &  \left| \sum \limits_{i=1}^{N} ~   \sum \limits_{|l|=2m} \sum \limits_{|\beta|=1}    \int \limits_{M} (\sqrt{\eta_{i}} ~ \nabla \left( \Delta_{g}^{m}u  \right),  (a_{l,\beta}\partial_{\beta} \sqrt{\eta_{i}}) \nabla^{l} u  )~ dv_{g} \right|  \notag\\ 
 & +   \sum \limits_{i=1}^{N}  \left|   \sum \limits_{|l|=2m-1} \sum \limits_{|\beta|=1}    \int \limits_{B_{p_{i}} (\tau)} (  \nabla\left( \Delta_{g}^{m}u  \right), \left( \sqrt{\eta_{i}} ~\nabla    (a_{l,\beta}\partial_{\beta} \sqrt{\eta_{i}}) \right) \nabla^{l} u    )~dv_{g}  \right| \notag\\
  \leq &  \left| \sum \limits_{i=1}^{N} ~   \sum \limits_{|l|=2m} \sum \limits_{|\beta|=1}    \int \limits_{M} (\sqrt{\eta_{i}} ~ \nabla \left( \Delta_{g}^{m}u  \right),  (a_{l,\beta}\partial_{\beta} \sqrt{\eta_{i}}) \nabla^{l} u  )~ dv_{g} \right|  +    C \left\| u\right\|_{H_{2m}^{2}}^{2}   \notag\\
  \leq &  \frac{1}{2} \left|  \sum \limits_{i=1}^{N} ~   \sum \limits_{|l|=2m} \sum \limits_{|\beta|=1}    \int \limits_{M} ( \nabla \left( \Delta_{g}^{m}u  \right),  (a_{l,\beta}\partial_{\beta} {\eta_{i}}) \nabla^{l} u  )~ dv_{g} \right|  +    C \left\| u\right\|_{H_{2m}^{2}}^{2}   \notag \\
    \leq &  \frac{1}{2} \left|    \sum \limits_{|l|=2m} \sum \limits_{|\beta|=1}    \int \limits_{M} ( \nabla \left( \Delta_{g}^{m}u  \right),  (a_{l,\beta} \partial_{\beta} (  \sum \limits_{i=1}^{N}  {\eta_{i}})) \nabla^{l} u  )~ dv_{g} \right|  +    C \left\| u\right\|_{H_{2m}^{2}}^{2}   \notag \\
 \leq &  C \left\| u\right\|_{H_{2m}^{2}}^{2} \qquad \text{since  \ \ $  \sum \limits_{i=1}^{N} \eta_{i} =1$ }
\end{align}
\medskip

\noindent
Similarly  after  integration by parts  one obtains
\begin{align}
\left| \sum \limits_{i=1}^{N}  \int \limits_{M} (\sqrt{\eta_{i}} ~ \nabla \left( \Delta_{g}^{m}u  \right),  \nabla \left(  \mathcal{L}_{\sqrt{\eta_{i}},g}^{2m-2}(u) \right))~ dv_{g}\right|  \leq  C \left\| u\right\|_{H_{2m}^{2}}^{2}  
\end{align}

\begin{align}
\left| \sum \limits_{i=1}^{N}  \int \limits_{M} ((\Delta_{g}^{m} u) ~\nabla \sqrt{\eta_{i}},\nabla \left( \mathcal{P}_{g}^{(2m-1,1)}(u, \sqrt{\eta_{i}}) \right))~ dv_{g} \right|  \leq  C \left\| u\right\|_{H_{2m}^{2}}^{2} 
\end{align}
\noindent
and 
\begin{align}
 & \sum \limits_{i=1}^{N}  \int \limits_{M} ((\Delta_{g}^{m} u) ~\nabla \sqrt{\eta_{i}}, \nabla \left(  \mathcal{L}_{\sqrt{\eta_{i}},g}^{2m-2}(u) \right))~ dv_{g}  \notag \\
&+ \sum \limits_{i=1}^{N}  \int \limits_{M} (\nabla \left( \mathcal{P}_{g}^{(2m-1,1)}(u, \sqrt{\eta_{i}}) \right), \nabla \left(  \mathcal{L}_{\sqrt{\eta_{i}},g}^{2m-2}(u) \right)) ~dv_{g}   \leq  C \left\| u\right\|_{H_{2m}^{2}}^{2} 
\end{align}
\medskip

\noindent
Hence for  $k$  odd, we also obtain that 
\begin{align}
 \sum \limits_{i=1}^{N}  \int \limits_{M} \left( \nabla \left( \Delta_{g}^{m} (\sqrt{\eta_{i}} u) \right)  \right)^{2} dv_{g} \leq   \int \limits_{M}  \left( \nabla \left( \Delta_{g}^{m}u  \right)  \right)^{2}_{g}  ~{dv_{g}} + C  \left\| u\right\|_{H_{2m}^{2}}^{2}.
\end{align}
Hence we have the claim and this completes the proof.



\section{Best constant and direct Minimizaton}\label{sec:min}
Let $\Omega_M\subset M$ be any smooth $n-$dimensional submanifold of $M$, possibly with boundary. In the sequel, we will either take $\Omega_M=M$, or $M\setminus \overline{B_{x_0}(\epsilon_0)}$ for some  $\epsilon_0>0$ small enough. We define $H_{k,0}^2(\Omega_M)\subset H_{k}^2(M)$ as the completion of $C^\infty_c(\Omega_M)$ for the norm $\Vert\cdot\Vert_{H_k^2}$. In this section, we prove the following result in the spirit of Aubin \cite{aubin}:

\begin{theorem}\label{th:min} Let $(M,g)$ be a compact Riemannian manifold of dimension $n>2k$, with $k\geq 1$. $\Omega_M\subset M$ be any smooth $n-$dimensional submanifold of $M$ as above. Let $P$ be a differential operator as in \eqref{the op} and let $f\in C^{0,\theta}(\Omega_M)$ be a H\"older continuous positive function. Assume that $P$ is coercive on $H_{k,0}^2(\Omega_M)$. Suppose  that
 \begin{equation}\label{ineq:large}
\inf \limits_{u \in  \mathcal{N}_f}   \int_{\Omega_M} u P(u)\, dv_g <  \frac{1}{\left( \sup_{\Omega_M} f \right)^{\frac{2}{\crit}}K_{0}(n,k)},
\end{equation}
where 
\begin{align}
\mathcal{N}_{f}:= \{ u \in H_{k,0}^{2}(\Omega_M) : \int \limits_{\Omega_M} f \left| u\right|^{2^{\sharp}_{k}}dv_{g}=1 \}.
\end{align}
Then there exists a minimizer $u\in \mathcal{N}_f$. Moreover, up to multiplication by a constant, $u\in C^{2k}(\overline{\Omega_M})$ is a solution to \begin{eqnarray*}
 \left \{ \begin{array} {lc}
          Pu  = f \left| u \right|^{ \crit - 2} u \qquad  \text{in} ~ \Omega_{M}\\
          D^{\alpha} u=0  \ \  \  \qquad \qquad\text{on } ~\partial \Omega_{M} \quad \text{for} \ \ |\alpha| \leq k-1.
            \end{array} \right. 
\end{eqnarray*} In addition, if the Green's function of $P$ on $\Omega_M$ with Dirichlet boundary condition is positive, then any minimizer is either positive or negative. When $\Omega_M=M$, and the Green's function of $P$ on $M$ is positive, then up to changing sign, $u>0$ is a solution to
$$P u= fu^{\crit-1}\hbox{ in }M.$$
\end{theorem}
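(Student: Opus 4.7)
The plan is to apply the direct method in the calculus of variations, following Aubin's argument adapted to the higher-order setting. Let $\mu := \inf_{u \in \mathcal{N}_f} \int_{\Omega_M} u P(u)\, dv_g$ and fix a minimizing sequence $(u_m) \subset \mathcal{N}_f$. Coercivity of $P$ together with \eqref{def:upv} gives $\int u_m P(u_m)\, dv_g \gtrsim \|u_m\|_{H_k^2}^2$, so $(u_m)$ is bounded in $H_{k,0}^2(\Omega_M)$. Extract a subsequence with $u_m \rightharpoonup u$ weakly in $H_{k,0}^2$, strongly in $H_{k-1}^2$ by Rellich-Kondrachov, and pointwise a.e.

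Set $v_m := u_m - u$, so that $v_m \rightharpoonup 0$ in $H_{k,0}^2$ and $v_m \to 0$ in $H_{k-1}^2$. By the Hilbert-space splitting of the leading term plus the strong convergence of the lower-order pieces, one obtains
\begin{equation*}
\int_{\Omega_M} u_m P(u_m)\, dv_g = \int_{\Omega_M} u P(u)\, dv_g + \int_{\Omega_M} v_m P(v_m)\, dv_g + o(1),
\end{equation*}
while the Brezis-Lieb lemma, applied to $f|u_m|^{\crit}$ using a.e.\ convergence, yields
\begin{equation*}
1 = \int_{\Omega_M} f|u_m|^{\crit}\, dv_g = \int_{\Omega_M} f|u|^{\crit}\, dv_g + \int_{\Omega_M} f|v_m|^{\crit}\, dv_g + o(1).
\end{equation*}
Set $\alpha := \int_{\Omega_M} f|u|^{\crit}\, dv_g$ and $\beta := \lim_m \int_{\Omega_M} f|v_m|^{\crit}\, dv_g$ along a further subsequence, so that $\alpha + \beta = 1$.

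The key step, and the main obstacle, is to rule out concentration (that is, $\beta > 0$). For this, I apply Theorem \ref{optimal inequality} to $v_m$ with a small parameter $\epsilon > 0$:
\begin{equation*}
\left(\int_{\Omega_M} f|v_m|^{\crit}\, dv_g\right)^{2/\crit} \leq (\sup_{\Omega_M} f)^{2/\crit}\Bigl[(K_0(n,k) + \epsilon)\int_{\Omega_M}(\Delta_g^{k/2} v_m)^2\, dv_g + B_\epsilon \|v_m\|_{H_{k-1}^2}^2\Bigr].
\end{equation*}
Since $\|v_m\|_{H_{k-1}^2} \to 0$ and the terms of $P$ of order strictly less than $2k$ are controlled by $\|v_m\|_{H_{k-1}^2}$, passing to the limit gives
\begin{equation*}
\beta^{2/\crit} \leq (\sup_{\Omega_M} f)^{2/\crit}(K_0(n,k) + \epsilon)\lim_{m} \int_{\Omega_M} v_m P(v_m)\, dv_g.
\end{equation*}
If $\alpha > 0$, then $u/\alpha^{1/\crit} \in \mathcal{N}_f$ gives $\int u P(u)\, dv_g \geq \mu\, \alpha^{2/\crit}$; if $\alpha = 0$, this inequality is trivial. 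Combining with the energy splitting $\mu \geq \int u P(u)\, dv_g + \lim_m \int v_m P(v_m)\, dv_g$, letting $\epsilon \to 0$, and using the hypothesis \eqref{ineq:large} strictly, I obtain
\begin{equation*}
\mu \geq \mu\,\alpha^{2/\crit} + \mu\,\beta^{2/\crit} \cdot \frac{K_0(n,k)}{K_0(n,k)} \cdot \theta
\end{equation*}
with a ratio $\theta > 1$ by the strict hypothesis \eqref{ineq:large}. Since $\crit > 2$, the function $t \mapsto t^{2/\crit}$ is strictly concave on $[0,1]$ and $\alpha^{2/\crit} + \beta^{2/\crit} \geq \alpha + \beta = 1$ with equality iff $\{\alpha,\beta\} = \{0,1\}$; this forces $\beta = 0$ and $\alpha = 1$, so $u \in \mathcal{N}_f$. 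Weak lower semicontinuity then gives $\int u P(u)\, dv_g \leq \mu$, so $u$ is a minimizer.

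Finally, the Lagrange multiplier rule applied to the constrained variational problem yields $Pu = \lambda f|u|^{\crit-2}u$ weakly, with $\lambda = \mu > 0$; rescaling by $\lambda^{1/(\crit-2)}$ produces a solution of the stated equation. Regularity $u \in C^{2k}(\overline{\Omega_M})$ follows from the bootstrap argument given in the appendix using the H\"older continuity of $f$. For the sign statement, when the Green's function $G$ of $P$ (with Dirichlet boundary data) on $\Omega_M$ is positive, the representation $u(x) = \int G(x,y) f(y) |u(y)|^{\crit-2} u(y)\, dv_g(y)$ together with the pointwise inequality $|u(x)| \leq \int G(x,y) f(y)|u(y)|^{\crit-1}\, dv_g(y)$ shows that $|u|$ is itself a supersolution of energy no larger than that of $u$; since both $u$ and $|u|$ attain the infimum $\mu$, strict positivity of $G$ forces $u$ to have constant sign. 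When $\Omega_M = M$, the resulting $u > 0$ solves $Pu = f u^{\crit-1}$ on $M$.
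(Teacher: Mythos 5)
Your existence argument is correct, and it takes a genuinely different route from the paper. You run the classical Aubin/Brezis--Lieb subadditivity scheme directly: split $u_m=u+v_m$, use Brezis--Lieb on $f|u_m|^{\crit}$ to get $\alpha+\beta=1$, bound $\beta^{2/\crit}$ via the almost-optimal inequality of Theorem \ref{optimal inequality} applied to $v_m$ (the lower-order terms being killed by the strong $H_{k-1}^2$ convergence), and then let the strict gap in \eqref{ineq:large} force $\beta=0$; note in passing that coercivity gives $\mu>0$, and your displayed inequality with the factor $\theta$ is garbled as written, but the intended estimate $\mu\geq\mu\alpha^{2/\crit}+\theta\mu\beta^{2/\crit}$, $\theta>1$, does follow and does the job. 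The paper instead derives this dichotomy from its Lions-type concentration-compactness result (Theorem \ref{th:CC}) through Lemma \ref{lem:min}. Your route is leaner and self-contained (it bypasses Theorem \ref{th:CC} entirely), while the paper's route yields additional information -- in the non-compact alternative the measure $|u_i|^{\crit}dv_g$ concentrates as a Dirac mass at a maximum point of $f$ -- which the paper reuses later (Proposition \ref{prop:center of mass}) in the Coron argument. Either way, the Lagrange multiplier identification $\lambda=\mu$ and the regularity via the appendix are as in the paper.

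The positivity part, however, has a genuine gap. You claim that $|u|$ "attains the infimum $\mu$", i.e.\ that $|u|$ is admissible with energy no larger than that of $u$. For $k\geq 2$ this is false in general: $|u|$ need not belong to $H_{k}^2$ at all (its second derivatives pick up singular contributions on the nodal set), and even formally there is no inequality comparing $\int |u|\,P(|u|)\,dv_g$ with $\int u\,P(u)\,dv_g$; the failure of this truncation trick is precisely the higher-order difficulty the theorem must address. The correct mechanism, which is what the paper does, is the dual argument of Van der Vorst type: solve the auxiliary problem $Pv=\mu f|u|^{\crit-1}$ with Dirichlet conditions, use positivity of the Green's function to get $v\geq |u|$ and $v>0$, and then a H\"older-inequality computation shows $J_P(v)\leq \mu$, so all inequalities are equalities, whence $|u|=v>0$ and $u$ has constant sign. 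Your pointwise inequality $|u(x)|\leq\int G(x,y)f(y)|u(y)|^{\crit-1}dv_g(y)$ is exactly the comparison $|u|\leq v$ from this scheme, but without the energy estimate for $v$ (not for $|u|$) the conclusion does not follow; as written, the constant-sign claim and hence the final positivity statement on $M$ are unproved.
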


\medskip\noindent{\it Proof of Theorem \ref{th:min}:} This type of result is classical. We only sketch the proof. For simplicity, we take $\Omega_M=M$. The proof of the general case is similar. Here and in the sequel, we define (see \eqref{def:upv})
$$I_P(u):=\int_M u P(u)\, dv_g\hbox{ for all }u\in H_k^2(M).$$
We start with the following lemma: 
\begin{lemma}\label{lem:min} Let $(u_{i}) \in \mathcal{N}_{f} $ be a minimizing sequence for $I_P$ on $\mathcal{N}_f$. Then
\begin{itemize}
\item[(i)] Either there exists $u_0\in \mathcal{N}_f$ such that $u_i\to u_0$ strongly in $H_{k}^{2}(M)$, and $u_0$ is a minimizer of $I_P$ on $\mathcal{N}_f$
\item[(ii)] Or there exists $x_0\in \overline{\Omega_M}$ such that $f(x_0)=\max_{\overline{\Omega_M}} f$ and $|u_i|^{\crit}\, dv_g\rightharpoonup \delta_{x_0}$ as $i\to +\infty$ in the sense of measures. Moreover, $\inf \limits_{u\in \mathcal{N}_f } I_P(u)=\frac{1}{K_0(n,k)(\max_M f)^{\frac{2}{\crit}}}$.
\end{itemize}
\end{lemma}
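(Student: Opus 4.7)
The plan is to proceed by standard weak-convergence/Brezis-Lieb analysis, showing that coercivity forces the minimizing sequence to split into pieces that each carry a share of the asymptotic energy and $L^{\crit}$-mass, and then to use the strict concavity of $x\mapsto x^{2/\crit}$ to rule out any nontrivial splitting. First I would invoke the coercivity of $P$ together with $I_P(u_i)\to \mu_f:=\inf_{\mathcal{N}_f} I_P$ to get that $(u_i)$ is bounded in $H_k^2(M)$; passing to a subsequence, $u_i\rightharpoonup u_0$ weakly in $H_k^2(M)$, strongly in $H_{k-1}^2(M)$ by Rellich-Kondrachov, and a.e. The lower-order tensor terms in $I_P$ involve only derivatives of order $\leq k-1$, so they are continuous for the strong $H_{k-1}^2$ topology; the principal term $\int_M(\Delta_g^{k/2}u_i)^2\, dv_g$ splits by weak convergence. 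Setting $v_i=u_i-u_0$, this yields the Brezis-Lieb-type decomposition
\begin{equation*}
I_P(u_i)=I_P(u_0)+I_P(v_i)+o(1),\qquad \int_M f|u_i|^{\crit}\,dv_g=\int_M f|u_0|^{\crit}\,dv_g+\int_M f|v_i|^{\crit}\,dv_g+o(1),
\end{equation*}
the second identity being classical Brezis-Lieb (since $u_i\to u_0$ a.e.).

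Set $a:=\int_M f|u_0|^{\crit}\,dv_g\in[0,1]$ and $\alpha:=I_P(u_0)$. If $a\in(0,1)$, then $u_0/a^{1/\crit}\in\mathcal{N}_f$ gives $\alpha\geq a^{2/\crit}\mu_f$, and for $i$ large enough, $v_i/(\int f|v_i|^{\crit})^{1/\crit}\in\mathcal{N}_f$ gives $I_P(v_i)\geq (1-a)^{2/\crit}\mu_f+o(1)$; summing and passing to the limit yields $\mu_f\geq \mu_f\bigl(a^{2/\crit}+(1-a)^{2/\crit}\bigr)$, which contradicts the strict concavity of $x\mapsto x^{2/\crit}$ on $(0,1)$ since $2/\crit<1$. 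If $a=1$, then $u_0\in\mathcal{N}_f$, so $\alpha\geq\mu_f$, forcing $I_P(v_i)\to 0$; combined with coercivity $I_P(v_i)\geq c\|v_i\|_{H_k^2}^2$ this gives $v_i\to 0$ strongly in $H_k^2(M)$ and $u_0$ is a minimizer, i.e.\ alternative (i).

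It remains to treat $a=0$ and identify the concentration, which I expect to be the main obstacle. Since $u_0\equiv 0$, the strong convergence $u_i\to 0$ in $H_{k-1}^2(M)$ together with the global sharp Sobolev inequality \eqref{eq:eps} applied to $u_i$, and $I_P(u_i)=\int_M(\Delta_g^{k/2}u_i)^2\,dv_g+o(1)\to\mu_f$, gives
\begin{equation*}
\left(\tfrac{1}{\max_{\overline{\Omega_M}} f}\right)^{2/\crit}\leq \Bigl(\int_M|u_i|^{\crit}\,dv_g\Bigr)^{2/\crit}\leq (K_0(n,k)+\epsilon)\,\mu_f+o(1),
\end{equation*}
so $\mu_f\geq 1/(K_0(n,k)(\max_{\overline{\Omega_M}}f)^{2/\crit})$ by letting $\epsilon\to 0$. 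The reverse inequality follows by the classical test function argument: one localizes the extremal $U_{0,\lambda}$ at a maximum point $x_\star$ of $f$ via $\exp_{x_\star}$ and a cutoff, and shows $J_P$ evaluated at this family tends to $1/(K_0(n,k) f(x_\star)^{2/\crit})$ as $\lambda\to\infty$; hence $\mu_f=1/(K_0(n,k)(\max f)^{2/\crit})$. Finally, to pinpoint the concentration, I apply the Concentration-Compactness Lemma (Theorem \ref{th:CC}) to $(u_i)$: since $u_i\to 0$ a.e., the diffuse part of the weak-$\ast$ limit of $|u_i|^{\crit}\,dv_g$ vanishes, leaving an atomic decomposition $\sum_j\nu_j\delta_{x_j}$ with the companion energy atoms $\mu_j\geq K_0(n,k)^{-1}\nu_j^{2/\crit}$. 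Summing yields $\mu_f\geq K_0(n,k)^{-1}\sum_j\nu_j^{2/\crit}$ and $\sum_j f(x_j)\nu_j=1$. Combining with the computed value of $\mu_f$ forces $\nu_j\leq 1/\max f$ for each $j$, while $\sum_j\nu_j\geq 1/\max f$ and the superadditivity $\sum\nu_j^{2/\crit}>(\sum\nu_j)^{2/\crit}$ for $2/\crit<1$ when more than one atom is present force a single atom $\nu_1=1/\max f$ located at a point $x_0$ with $f(x_0)=\max_{\overline{\Omega_M}}f$, which is alternative (ii).
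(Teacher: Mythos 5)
Your proposal is correct, and it reaches the dichotomy by a genuinely different organization than the paper, even though the essential ingredients coincide. The paper applies the concentration-compactness Theorem \ref{th:CC} to the whole minimizing sequence at once and extracts both alternatives from a single chain of inequalities: writing $\alpha$ for the infimum, it combines $\alpha\geq I_P(u_0)+\sum_j\beta_j\geq \alpha\bigl(\textstyle\int_M f|u_0|^{\crit}\,dv_g\bigr)^{2/\crit}+K_0(n,k)^{-1}\sum_j\alpha_j^{2/\crit}$ with the mass identity $1=\int_M f|u_0|^{\crit}\,dv_g+\sum_j f(x_j)\alpha_j$, the asserted test-function bound $\alpha\leq K_0(n,k)^{-1}(\max_M f)^{-2/\crit}$, and strict subadditivity of $t\mapsto t^{2/\crit}$; the equality analysis then yields simultaneously the single atom, its location at a maximum of $f$, and the value of the infimum. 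You instead do a Brezis--Lieb splitting first and kill the intermediate case $a\in(0,1)$ by rescaling into $\mathcal{N}_f$ and strict concavity, so concentration-compactness is not needed there; you treat $a=1$ directly, and only invoke Theorem \ref{th:CC} when $u_0\equiv 0$, where you get the lower bound on the infimum from the almost-optimal inequality \eqref{eq:eps} rather than from the atom inequality. Both proofs ultimately rest on the same two external facts: the CC lemma and the test-function upper bound for the infimum (which the paper merely asserts and you sketch via bubbles concentrated at a maximum point of $f$). What your route buys is a cleaner separation of the compact case from the concentrating case and an argument for the intermediate case that uses only Brezis--Lieb and scaling; what it costs is a three-way case distinction and the need to record explicitly that $\inf_{\mathcal{N}_f}I_P>0$ (immediate from coercivity and the Sobolev embedding) before dividing in the concavity contradiction. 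A small bonus of your computation: you obtain the limit measure with its correct normalization, $|u_i|^{\crit}\,dv_g\rightharpoonup(\max_{\overline{\Omega_M}}f)^{-1}\delta_{x_0}$, which is the precise form of alternative (ii) for general $f$ (the paper's $\delta_{x_0}$ corresponds to the case $\max f=1$, e.g. $f\equiv 1$ as used later).
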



\smallskip\noindent{\it Proof of Lemma \ref{lem:min}:} We define $\alpha:=\inf\{I_P(u)/\, u\in {\mathcal N}_f\}$. As the functional $I_{g}$ is coercive so the sequence $(u_{i})$ is bounded in  $H^{2}_{k}(M)$. We let $u_0\in H^{2}_{k}(M)$ such that, up to a subsequence, $u_{i} \rightharpoonup u_{0}$ weakly in $H^{2}_{k}(M)$ as $i\to +\infty$, and $u_i(x)\to u_0(x)$ as $i\to +\infty$ for a.e. $x\in M$. Therefore,
\begin{align}\label{bnd:u0}
 \left\|u_0\right\|_{L^{2_{k}^{\sharp}}}^{2^{\sharp}_{k}} \leq  \liminf_{i\to +\infty}\left\|u_i\right\|_{L^{2_{k}^{\sharp}}}^{2^{\sharp}_{k}}=1. 
\end{align}
We define $v_{i}:=u_{i}-u_{0}$. Up to extracting a subsequence, we have that $(v_i)_i\to 0$ in $H_{k-1}^2(M)$. We define $\mu_{i} := (\Delta_{g}^{k/2} u_{i})^{2}~dv_{g}$ and $\tilde{\nu}_{i} = | u_{i}|^{2_{k}^{\sharp}}~dv_{g}$ and $\nu_{i} = f| u_{i}|^{2_{k}^{\sharp}}~dv_{g}$  for all $i$. Up to a subsequence, we denote respectively by $\mu$, $\tilde{\nu}$ and $\nu$ their limits in the sense of measures. It follows from the concentration-compactness Theorem \ref{th:CC} that, 
\begin{equation}
\tilde{\nu} = |u_{0}|^{2_{k}^{\sharp}}\, dv_g + \sum \limits_{j \in \mathcal{J}} \alpha_{j} \delta_{x_{j}} \hbox{ and }\mu \geq  (\Delta_{g}^{k/2} u_{0})^{2}\, dv_g + \sum \limits_{j \in \mathcal{I}} \beta_{j} \delta_{x_{i}}\label{ineq:mu}
\end{equation}
where $J\subset \mathbb{N}$ is at most countable,  $(x_{j})_{j\in J} \in M$ is a family of points, and $(\alpha_{j})_{j\in J}\in \mathbb{R}_{\geq 0}$, $(\beta_j)_{j\in J}\in \mathbb{R}_{\geq 0}$ are such that
\begin{align}
 \alpha_{j}^{2/{2_{k}^{\sharp}}} \leq K_{0}(n,k) ~ \beta_{j} \hbox{ for all }j\in J.
\end{align}
As a consequence, we get that
\begin{equation}
\nu = f|u_{0}|^{2_{k}^{\sharp}}\, dv_g + \sum \limits_{j \in \mathcal{J}} f(x_j)\alpha_{j} \delta_{x_{j}} \\
\end{equation}
Since $(u_{i}) \in \mathcal{N}_{f}$, and $M$ is compact, we have that $\int_M\, d\nu=1$ and then 
\begin{equation}\label{ineq:a}
1= \int_M f|u_0|^{\crit}\, dv_g+\sum \limits_{j \in \mathcal{J}} f(x_j)\alpha_{j}.
\end{equation}
Since $(u_i)_i\to u_0$ strongly in $H_{k-1}^2(M)$, integrating \eqref{ineq:mu} yields
\begin{equation}\label{ineq:b}
\alpha\geq I_P(u_0)+ \sum \limits_{j \in \mathcal{J}}  \beta_j\geq \alpha\Vert u_0\Vert_{\crit}^2+K_0(n,k)^{-1}\sum\limits_{j\in \mathcal {J}}\alpha_{j}^{2/{2_{k}^{\sharp}}}. 
\end{equation}
Since $\alpha\leq K_0(n,k)^{-1}(\max_M f)^{-2/\crit}$, we then get that 
\begin{itemize}
\item[(i)] either $\Vert u_0\Vert_{\crit}=1$ and $\alpha_j=0$ for all $j\in \mathcal{J}$,
\item[(ii)] or $u_0\equiv 0$, $f(x_{j_0})\alpha_{j_0}=1$ for some $j_0\in \mathcal{J}$, $f(x_{j_0})=\max_M f$ and $\alpha_{j}=0$ for all $j\neq j_0$.
\end{itemize}
In case (i), we get from the strong convergence to $0$ of $(v_i)_i$ in $H_{k-1}^2(M)$ that $I_P(u_i) =   \int \limits_{M}  (\Delta_g^{k/2} v_{i}  )^2 \, dv_g+ I_P(u_0)+o(1)$ as $i\to +\infty$. Since $u_0\in {\mathcal N}_f$ and $(u_i)$ is a minimizing sequence, we then get that $(v_i)_0$ goes to $0$ strongly in $H_k^2(M)$, and therefore $u_i\to u_0$ strongly in $H_k^2(M)$.

\smallskip\noindent In case (ii), \eqref{ineq:b} yields $\alpha=K_0(n,k)^{-1}(\max_M f)^{-2/\crit}$ and $I_P(u_0)=0$, which yields $u_0\equiv 0$ since the operator is coercive. 

\smallskip\noindent This completes the proof of Lemma \ref{lem:min}.\hfill$\Box$

\medskip\noindent We go back to the proof of Theorem \ref{th:min}. Let $(u_i)_i$ be a minimizing sequence for $I_P$ on ${\mathcal N}_f$. It follows from the assumption \eqref{ineq:large} that case (i) of Lemma \ref{lem:min} holds, and then, there exists a minimizer $u_0\in {\mathcal N}_f$ that is a minimizer. Therefore, it is a weak solution to $P_{g}^{k}u_0 = \alpha f \left| u_0 \right|^{\crit-2}u_0$ in $M$ (see \eqref{weak:sol} for the definition). It then follows from the regularity Theorem \ref{main regularity} that $u\in C^{2k,\theta}(M)$.

\medskip\noindent We let $G: M\times M\setminus\{(x,x)/\, x\in M\}$ be the Green's function of $P$ on $M$. We assume that $G(x,y)>0$ for all $x\neq y\in M$. Green's representation formula yields 
\begin{equation}\label{rep:green}
\varphi(x)=\int_M G(x,y) (P\varphi)(y)\, dv_g\hbox{ for all }x\in M\hbox{ and all }\varphi\in C^{2k}(M).
\end{equation}
 It follows from Proposition \ref{existence and uniqueness} that there exists $v \in H^{2}_{k}(M)$ such that 
\begin{align}
Pv = \alpha f \left| u_{0}\right|^{2^{\sharp}_{k}-1} \qquad \text{ in } M.
\end{align}
Standard regularity  (taking inspiration from Vand der Vorst \cite{vdv}) yields $v \in C^{2k}(M)$. We have that $P\left( v \pm u_{0} \right) \geq 0$. Since $G>0$, it follows from Green's formula \eqref{rep:green} that $  v \pm u_{0}  \geq 0$. So $v \geq |u_{0}|$ and  therefore $v \neq 0$. Independently, since $Pv\geq 0$ and $v\not\equiv 0$, Green's formula \eqref{rep:green} yields $v>0$. Using H\"older's inequality and $v\geq |u_0|$, we get that
\begin{eqnarray}
&& J_P(u)=\frac{ \int \limits_{{M}} v P(v)~ dv_{g}  }{\left( ~ \int \limits_{{M}} f |v|^{  2_k^{\sharp}}~ dv_{g} \right)^{2/  2_k^{\sharp}}} 
=\frac{ \alpha \int \limits_{{M}} v  f \left| u_{0}\right|^{2^{\sharp}_{k}-1} ~ dv_{g}  }{\left( ~ \int \limits_{{M}} f |v|^{  2_k^{\sharp}}~ dv_{g} \right)^{2/  2_k^{\sharp}}} \\
&&  \leq  \frac{ \alpha  \left( \int \limits_{{M}} f |v|^{  2_k^{\sharp}}~ dv_{g} \right)^{\frac{1}{2^{\sharp}_{k}}}  \left( \int \limits_{{M}} f |u_{0}|^{  2_k^{\sharp}}~ dv_{g} \right)^{\frac{2^{\sharp}_{k}-1}{2^{\sharp}_{k}}} }{\left( ~ \int \limits_{{M}} f |v|^{  2_k^{\sharp}}~ dv_{g} \right)^{2/  2_k^{\sharp}} }\\
&&  \leq  \frac{ \alpha   \left( \int \limits_{{M}} f |u_{0}|^{  2_k^{\sharp}}~ dv_{g} \right)^{\frac{2^{\sharp}_{k}-1}{2^{\sharp}_{k}}} }{\left( ~ \int \limits_{{M}} f |u_{0}|^{  2_k^{\sharp}}~ dv_{g} \right)^{1/  2_k^{\sharp}} }    \leq   \alpha   \left( \int \limits_{{M}} f |u_{0}|^{  2_k^{\sharp}}~ dv_{g} \right)^{\frac{2^{\sharp}_{k}-2}{2^{\sharp}_{k}}} \leq \alpha
\end{eqnarray} 
since  $\int \limits_{M} f \left| u_{0} \right|^{2^{\sharp}_{k}}dv_{g}=1$. Since $\alpha$ is the infimum of the functional, we get that $J_P(u)=\alpha$. Hence $v$  attains the infimum and therefore it also solves the equation $Pv = \mu f v^{2^{\sharp}_{k}-1}$ weakly in $M$, and $v\in H_{k,0}^2(M)$. Moreover, one has equality in all the inequalities above, and then $|u_{0}|=v>0$, and therefore either $u_{0}>0$ or $u_{0}<0$ in $M$. This ends the proof of Theorem \ref{th:min}.\hfill$\Box$




\section{Concentration Compactness Lemma }{\label{CC}}





We now state and prove the concentration  compactness lemma in the spirit of P.-L.Lions for  the case of a closed manifold:

\begin{theorem}[Concentration-compactness]\label{th:CC}
Let $(M,g)$  be a  smooth, compact Riemannian manifold of dimension $n$ and let $k$ be a positive integer such that $2k < {n}$. Suppose  $(u_{m})$ be  a bounded sequence in $H_{k}^{2}(M)$. Up to extracting a subsequence, there exist two nonnegative Borel-regular measure $\mu,\nu$ on $M$ and $u\in H_k^2(M)$ such that
\begin{enumerate}
\item
[(a)]   $u_m \rightharpoonup u \quad \text{weakly in $H_k^{2}(M)$}$\\
\item
[(b)] $\mu_{m} := (\Delta_{g}^{k/2} u_{m})^{2}~dv_{g} \rightharpoonup \mu \quad \text{weakly in the sense of measures} $\\
\item
[(c)] $\nu_{m} := | u_{m}|^{2_{k}^{\sharp}}~dv_{g} \rightharpoonup \nu \quad \text{weakly in the sense of measures} $\\
\end{enumerate}
Then there exists an at most countable index set $\mathcal{I}$, a family of distinct points $\{ x_{i} \in M : i \in \mathcal{I} \}$,  families  of nonnegative weights $\{ \alpha_{i} : i\in \mathcal{I}\}$ and $\{ \beta_{i} : i\in \mathcal{I}\}$ such that
\begin{enumerate}
\item[(i)]
\begin{align}
\nu =& |u|^{2_{k}^{\sharp}}\, dv_g + \sum \limits_{i \in \mathcal{I}} \alpha_{i} \delta_{x_{i}} \\
\mu \geq&  (\Delta_{g}^{k/2} u)^{2}\, dv_g + \sum \limits_{i \in \mathcal{I}} \beta_{i} \delta_{x_{i}}
\end{align}
where $\delta_{x} $ denotes the Dirac mass at $x \in M$ with mass equal to $1$.\item
[(ii)] for all $i \in \mathcal{I}$, $\alpha_{i}^{2/{2_{k}^{\sharp}}} \leq K_{0}(n,k) ~ \beta_{i} $. In particular $\sum \limits_{i \in \mathcal{I}} \alpha_{i}^{2/{2_{k}^{\sharp}}} <   \infty$.
\end{enumerate}
\end{theorem}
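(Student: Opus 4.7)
The plan is to follow the classical strategy of P.-L.~Lions adapted to the polyharmonic setting on $(M,g)$, combining three ingredients: the Brezis-Lieb Lemma for the $L^{\crit}$-norm, the sharp Sobolev inequality \eqref{eq:eps} established in Theorem \ref{optimal inequality}, and a localisation/commutator estimate for the operator $\Delta_g^{k/2}$. First, I reduce to the case $u\equiv 0$. Setting $v_m:=u_m-u$, the Rellich-Kondrachov theorem gives $v_m\to 0$ strongly in $H_{k-1}^2(M)$ and in $L^p(M)$ for every $p<\crit$. The Brezis-Lieb Lemma then yields $\nu=|u|^{\crit}\,dv_g+\tilde\nu$, where $\tilde\nu$ is the weak-$*$ limit of $|v_m|^{\crit}\,dv_g$. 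Expanding $(\Delta_g^{k/2}u_m)^2=(\Delta_g^{k/2}v_m)^2+2\,\Delta_g^{k/2}v_m\cdot\Delta_g^{k/2}u+(\Delta_g^{k/2}u)^2$ and testing against any $\varphi\in C^0(M)$, the cross term vanishes thanks to $\varphi\,\Delta_g^{k/2}u\in L^2(M)$ and $v_m\rightharpoonup 0$ in $H_k^2(M)$. Hence $\mu=(\Delta_g^{k/2}u)^2\,dv_g+\tilde\mu$, and it suffices to extract the atomic structure of $(\tilde\mu,\tilde\nu)$.

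The heart of the argument is to test the sharp inequality \eqref{eq:eps} against $\varphi v_m$ for arbitrary $\varphi\in C^\infty(M)$ and $\epsilon>0$:
\begin{equation*}
\left(\int_M|\varphi v_m|^{\crit}\,dv_g\right)^{2/\crit}\leq (K_0(n,k)+\epsilon)\int_M(\Delta_g^{k/2}(\varphi v_m))^2\,dv_g+B_\epsilon\,\|\varphi v_m\|_{H_{k-1}^2}^2.
\end{equation*}
The last term vanishes as $m\to\infty$. Using a Leibniz expansion analogous to Step 3 of Theorem \ref{optimal inequality}, I write $\Delta_g^{k/2}(\varphi v_m)=\varphi\,\Delta_g^{k/2}v_m+R_m$, where $R_m$ collects products of positive-order derivatives of $\varphi$ with derivatives of $v_m$ of order at most $k-1$; thus $\|R_m\|_{L^2(M)}\to 0$. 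Passing to the limit in $m$ and then letting $\epsilon\to 0$ yields the key estimate
\begin{equation}\label{plan:key}
\left(\int_M|\varphi|^{\crit}\,d\tilde\nu\right)^{2/\crit}\leq K_0(n,k)\int_M\varphi^2\,d\tilde\mu\qquad\text{for all }\varphi\in C^\infty(M).
\end{equation}

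Approximating characteristic functions by continuous $\varphi$, \eqref{plan:key} extends to $\tilde\nu(E)^{2/\crit}\leq K_0(n,k)\,\tilde\mu(E)$ for every Borel $E\subset M$. In particular $\tilde\nu\ll\tilde\mu$, and by Radon-Nikodym $d\tilde\nu=h\,d\tilde\mu$ with $h\in L^1(\tilde\mu)$. Applied to balls $B(x,r)$, this inequality rewrites as
\begin{equation*}
\left(\frac{1}{\tilde\mu(B(x,r))}\int_{B(x,r)}h\,d\tilde\mu\right)^{2/\crit}\leq K_0(n,k)\,\tilde\mu(B(x,r))^{1-2/\crit}.
\end{equation*}
Since $\crit>2$, Lebesgue's differentiation theorem for Radon measures forces $h(x)=0$ at every non-atom of $\tilde\mu$. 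Therefore $\tilde\nu=\sum_{i\in\mathcal I}\alpha_i\delta_{x_i}$ is purely atomic, supported on atoms of $\tilde\mu$. Evaluating \eqref{plan:key} on bump functions peaking at $x_i$ gives $\alpha_i^{2/\crit}\leq K_0(n,k)\,\beta_i$ with $\beta_i:=\tilde\mu(\{x_i\})$, and the summability $\sum\alpha_i^{2/\crit}<\infty$ follows from $\sum\beta_i\leq\tilde\mu(M)<\infty$. Combining with the first step produces the announced decompositions of $\mu$ and $\nu$.

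The main obstacle is the commutator estimate $\|\Delta_g^{k/2}(\varphi v_m)-\varphi\,\Delta_g^{k/2}v_m\|_{L^2(M)}\to 0$, which must be verified separately for the two parities of $k$; in the odd case $\Delta_g^{k/2}=\nabla\Delta_g^m$ is a first-order covariant derivative of $\Delta_g^m$, so extra care is needed to collect all the lower-order products arising from iterating the Leibniz rule on $(M,g)$. This is, however, precisely the type of bookkeeping already performed in Steps 3.1--3.2 of the proof of Theorem \ref{optimal inequality}, and once it is in hand the remainder of the argument is a mechanical adaptation of the Euclidean Lions concentration-compactness proof.
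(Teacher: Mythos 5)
Your argument is correct and its skeleton coincides with the paper's proof: the reduction to $u\equiv 0$ by setting $v_m=u_m-u$ (Brezis--Lieb for the $L^{\crit}$ part, vanishing of the cross term for the $\mu$ part), and the key estimate $\left(\int_M|\varphi|^{\crit}\,d\tilde\nu\right)^{2/\crit}\le K_0(n,k)\int_M\varphi^2\,d\tilde\mu$ obtained by testing \eqref{eq:eps} on $\varphi v_m$ and absorbing the commutator $\Delta_g^{k/2}(\varphi v_m)-\varphi\,\Delta_g^{k/2}v_m$ into lower-order terms that vanish by the strong $H_{k-1}^2$ convergence, are exactly the paper's steps (the paper merely performs the reduction to $u\equiv0$ at the end, as its Step~2, rather than at the start). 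Where you genuinely diverge is the extraction of the atomic structure. The paper uses a double Radon--Nikodym decomposition $d\nu=f\,d\mu$, $d\mu=g\,d\nu+d\sigma$, derives a reversed H\"older inequality for the truncated measures $d\nu_N=g^{\crit/(\crit-2)}\chi_{\{g\le N\}}\,d\nu$, and invokes Proposition \ref{borellemma} to conclude that each $\nu_N$ is a finite sum of Dirac masses; you instead apply the Borel-set inequality $\tilde\nu(E)^{2/\crit}\le K_0(n,k)\,\tilde\mu(E)$ to small balls and use the differentiation theorem for Radon measures to show that the density $h=d\tilde\nu/d\tilde\mu$ vanishes $\tilde\mu$-a.e.\ off the (at most countable) atom set of $\tilde\mu$. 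Both are classical variants of Lions's argument and both work; the paper's route is entirely elementary and self-contained (Proposition \ref{borellemma} requires no covering theorem), whereas yours relies on the Lebesgue--Besicovitch differentiation theorem for general Radon measures on $(M,g)$ --- valid on a compact Riemannian manifold, since small geodesic balls are uniformly comparable to Euclidean balls (or one can localize in charts), but this deserves a sentence of justification. Your route gives $\beta_i=\tilde\mu(\{x_i\})$ directly, the paper's gives $\beta_i=g(x_i)\alpha_i$; the conclusions (i)--(ii), including $\sum_i\alpha_i^{2/\crit}\le K_0(n,k)\,\tilde\mu(M)<\infty$, are identical.
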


\medskip\noindent{\it Proof of Theorem \ref{th:CC}:} By the Riesz representation theorem $(\mu_{m})$, and $(\nu_{m})$  are sequences of   Radon  measures  on $M$. 

\medskip\noindent{\bf Step 1:} First we assume that $u \equiv 0$.
\medskip\noindent
Let $\varphi \in C^{\infty}(M)$, then from \eqref{optimal inequality} we have that, given any $ \varepsilon >0$ there exists $B_{\varepsilon} \in  \R$ such that
\begin{align}
\left( \int \limits_{M} |\varphi u_{m}|^{2_{k}^{\sharp}} ~ dv_{g} \right)^{2/{2_{k}^{\sharp}}} \leq \left(K_{0}(n,k) + \varepsilon \right) \int \limits_{M} (\Delta_{g}^{k/2} (\varphi u_{m}))^{2}~ dv_{g} + B_{\varepsilon} || \varphi u_{m}||^{2}_{H_{k-1}^{2}}.
\end{align} 
\medskip

\noindent
Since $u_{m } \rightharpoonup 0$ in $H_{k}^{2}(M)$, letting $m \rightarrow + \infty$ and then taking the limit $\varepsilon \rightarrow 0$, it follows that 
\begin{align}{\label{borel inequality}}
\left( \int \limits_{M} |\varphi |^{2_{k}^{\sharp}} ~ d\nu \right)^{2/{2_{k}^{\sharp}}} \leq K_{0}(n,k)  \int \limits_{M} \varphi^{2}~ d\mu. 
\end{align} 
\medskip

\noindent
By regularity of the Borel measure  $\nu $,   $(\ref{borel inequality})$ holds for any Borel measurable function $\varphi$ and in particular   for any Borel set $E \subset M$  we have 
\begin{align}{\label{borel}}
\nu(E)^{2/{2_{k}^{\sharp}}}  \leq K_{0}(n,k)  ~\mu(E).
\end{align}
Therefore the measure $\nu$ is absolutely continuous with respect to the measure $\mu$ and hence by the Radon-Nikodyn theorem, we get 
\begin{equation}
d\nu =  f d \mu \hbox{ and }d \mu = g d\nu + d \sigma
\end{equation}
where $f \in L^{1}(M, \mu)$ and $g \in L^{1}(M, \nu)$ are nonnegative functions, $\sigma$ is a positive Borel measure on $M$ and $d\nu \bot d \sigma$. 
\medskip

\noindent
Let $S= M \backslash(supp ~ \sigma)$. Then for any $\varphi \in C(M)$  with support $supp(\varphi) \subset S$ one has
\begin{align}{\label{154}}
\int \limits_{M} \varphi ~  d\nu =  \int \limits_{M} \varphi f ~ d \mu =  \int \limits_{M} \varphi ~   fg~ d\nu. 
\end{align}
By regularity of the Borel measures $\mu$ and $\nu $  $(\ref{154})$ holds for any Borel measurable function $\varphi$.
This implies that $fg=1$ a.e with respect to $\nu$. So, in particular $g >0$  $\nu$ a.e in $S$. Let $\psi  \in C(M)$, taking $\varphi=  \psi \chi_{S}$ in $(\ref{borel inequality})$ we have
\begin{eqnarray}
&&\left( \int \limits_{M} |\psi |^{2_{k}^{\sharp}}  \mathcal{X}_{S} ~ d\nu \right)^{2/{2_{k}^{\sharp}}} \leq  K_{0}(n,k)  \int \limits_{M} \psi^{2}  \mathcal{X}_{S}~ d\mu  \notag\\
&&  = K_{0}(n,k)  \int \limits_{M} \psi^{2}  \mathcal{X}_{S}~ [g d\nu + d \sigma]   =K_{0}(n,k)  \int \limits_{M} \psi^{2} g \mathcal{X}_{S}~ d\nu  
\end{eqnarray} 
Since $d\nu \bot d \sigma$ and $supp~ \nu \subset S$, we get that
\begin{align}{\label{g borel inequality}}
\left( \int \limits_{M} |\psi |^{2_{k}^{\sharp}} ~ d\nu \right)^{2/{2_{k}^{\sharp}}} \leq K_{0}(n,k)  \int \limits_{M} \psi^{2}g~ d\nu
 \end{align}
 By regularity of the Borel measure  $\nu $  the above relation  holds for any Borel measurable function $\psi$.
 \medskip

 \noindent
 Let $\phi  \in C(M)$ and let   $\displaystyle{\psi= \phi g^{\frac{1}{2_{k}^{\sharp}-2}} \mathcal{X}_{ \{g \leq N\} }}$ , $ \displaystyle{d\nu_{N}= g^{\frac{2_{k}^{\sharp}}{2_{k}^{\sharp}-2}} \mathcal{X}_{ \{g \leq N\} }} d\nu$. Then we have
 \begin{align}
 \left( \int \limits_{M} |\phi |^{2_{k}^{\sharp}} ~ d\nu_{N} \right)^{2/{2_{k}^{\sharp}}} \leq K_{0}(n,k)  \int \limits_{M} \phi^{2}~ d\nu_{N}.
 \end{align}
  By regularity of the Borel measure  $\nu $  the above relation  holds for any Borel measurable function $\phi$.
 
  \noindent
 It follows from Proposition \ref{borellemma} below that for each $N$ there exist a finite set $\mathcal{I}_{N}$,  a finite set of  distinct points $\{ x_{i} : i \in \mathcal{I}_{N}\}$and  a finite set of weights $\{ \tilde{\alpha}_{i} : i \in \mathcal{I}_{N}\}$ such that
 \begin{align}
 d{\nu}_{N}= \sum \limits_{i \in \mathcal{I}_{N}} {\tilde{\alpha}}_{i} ~ \delta_{x_{i}}
 \end{align}
  Let $\mathcal{I} = \bigcup \limits_{N=1}^{\infty} \mathcal{I}_{N}$. Then $\mathcal{I}$ is a countable set. For a Borel set  $E$, then one has  by monotone convergence theorem
 \begin{align}
  \int \limits_{M} \chi_{E} ~  g^{\frac{2_{k}^{\sharp}}{2_{k}^{\sharp}-2}}  d\nu =
 \lim \limits_{N \rightarrow \infty}  \int \limits_{M} \chi_{E} ~ d\nu_{N}.
 \end{align}
So $g^{\frac{2_{k}^{\sharp}}{2_{k}^{\sharp}-2}}  d\nu=  \sum \limits_{i \in \mathcal{I}} {\tilde{\alpha}}_{i} \delta_{x_{i}}$. 
 Since $g >0$ $\nu$ a.e , there exists $\alpha_{i}>0$  such that we have $ d\nu=  \sum \limits_{i \in \mathcal{I}} {{\alpha}}_{i} \delta_{x_{i}}$. Since $\mu = g d \nu + d \sigma \geq  g d \nu $, we get that 
 \begin{align}
 \mu \geq   \sum \limits_{i \in \mathcal{I}} {\beta}_{i} \delta_{x_{i}}   \qquad \text{  where } \beta_{i}=  g(x_{i}){{\alpha}}_{i} 
 \end{align}
 \medskip

 \noindent
 Taking $\psi=\mathcal{X}_{\{ x_{i}\}}$ in $( \ref{g borel inequality})$ we have for all $i \in \mathcal{I}$
 \begin{align}
  \alpha_{i}^{2/{2_{k}^{\sharp}}}  \leq K_{0}(n,k) ~ g  \alpha_{i}= K_{0}(n,k) ~\beta_{i}
 \end{align}
 and 
\begin{align}
\frac{1}{K_{0}(n,k) }\sum \limits_{i \in \mathcal{I}}    \alpha_{i}^{2/{2_{k}^{\sharp}}}  \leq   \sum \limits_{i \in \mathcal{I}}  \beta_{i} \leq   \mu(M) < + \infty.
\end{align}
 \medskip

 \noindent
 This proves the theorem for $u \equiv 0$. This ends Step 1.

 \medskip\noindent{\bf Step 2:} Assume $u \not\equiv 0$
and let $v_{m}:= u_{m}-u$. Then  $v_{m} \rightharpoonup 0$ weakly in $H_{k}^{2}(M)$. Therefore, as one checks, $\tilde{ \mu}_{m} := (\Delta_{g}^{k/2} v_{m})^{2}~dv_{g} \rightharpoonup  \mu -  (\Delta_{g}^{k/2} u)^{2}~dv_{g}$ and $\tilde{\nu}_{m} := | v_{m}|^{2_{k}^{\sharp}}~dv_{g} \rightharpoonup \nu - | u|^{2_{k}^{\sharp}}~dv_{g} $ weakly in the sense of measures. Applying Step 1 to the measures  $\tilde{\mu}_{m}$ and $\tilde{\nu}_{m}$ yields Theorem \ref{th:CC}.\hfill$\Box$
 
 
 
\medskip\noindent We now prove the reversed  H\"{o}lder inequality that was used in the proof.
\begin{proposition}\label{borellemma}
Let $\mu$ be a finite Borel measure on $M$ and suppose that  for any  Borel measurable function $\varphi$ one has 
\begin{align}
\left( \int \limits_{M} |\varphi|^{q}~ d\mu\right)^{1/q} \leq C \left( \int \limits_{M} |\varphi|^{p}~ d\mu\right)^{1/p}
\end{align}
for some $C>0$ and $1 \leq p < q< + \infty$. Then there exists j points $x_{1}, \ldots, x_{j} \in M$, and j positive real numbers $c_{1}, \ldots, c_{j}$ such that
\begin{align}
\mu = \sum \limits_{i=1}^{j} c_{i} \delta_{x_{i}}
\end{align}
where $\delta_{x} $ denotes the Dirac measure concentrated at $x \in M$ with mass equal to $1$. Moreover $c_{i} \geq (\frac{1}{C})^{\frac{pq}{q-p}}$. 
\end{proposition}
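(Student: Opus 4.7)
\textbf{Proof plan for Proposition \ref{borellemma}.} The plan is to exploit the reverse H\"older inequality by testing against indicator functions of Borel sets, in order to obtain a quantitative atomicity lower bound on $\mu$.

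First I would take $\varphi = \chi_E$ for an arbitrary Borel set $E \subset M$ in the hypothesis. This yields $\mu(E)^{1/q} \leq C \mu(E)^{1/p}$, and since $1/p - 1/q > 0$ the rearrangement gives the key dichotomy: either $\mu(E) = 0$ or
\begin{align*}
\mu(E) \geq \varepsilon_0 := \left(\tfrac{1}{C}\right)^{\frac{pq}{q-p}}.
\end{align*}
In other words, every Borel set carries either zero mass or mass at least $\varepsilon_0$.

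Next I would show that the topological support $S := \{x \in M : \mu(B_r(x)) > 0 \text{ for all } r > 0\}$ is a finite set. Suppose for contradiction that $S$ is infinite; by compactness of $M$, $S$ has an accumulation point and hence contains a sequence $(y_n)$ of pairwise distinct points. Choosing radii $r_n > 0$ small enough that the balls $B_{r_n}(y_n)$ are pairwise disjoint (possible by distinctness), the definition of support gives $\mu(B_{r_n}(y_n)) > 0$, so $\mu(B_{r_n}(y_n)) \geq \varepsilon_0$ by the dichotomy above. This contradicts $\sum_n \mu(B_{r_n}(y_n)) \leq \mu(M) < +\infty$. Hence $S = \{x_1, \ldots, x_j\}$ is finite, with $j \leq \mu(M)/\varepsilon_0$.

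Then I would verify that $\mu$ is concentrated on $S$. For every $x \in M \setminus S$, the definition of $S$ provides an open ball $B_x$ with $\mu(B_x) = 0$. Since $M$ is second countable, the open cover $\{B_x\}_{x \in M \setminus S}$ of $M \setminus S$ admits a countable subcover, and $\sigma$-subadditivity gives $\mu(M \setminus S) = 0$. Therefore $\mu = \sum_{i=1}^j c_i \delta_{x_i}$ with $c_i := \mu(\{x_i\}) > 0$. Applying the hypothesis once more to $\varphi = \chi_{\{x_i\}}$ gives $c_i \geq \varepsilon_0 = (1/C)^{pq/(q-p)}$, which is the quantitative bound stated.

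\textbf{Expected obstacle.} There is no serious technical obstacle here: the whole argument is a clean consequence of testing against indicators and using compactness plus second countability of $M$. The only subtle step is ruling out a non-atomic part, which is handled efficiently through the support argument above rather than invoking Sierpi\'nski-type results on the range of non-atomic measures.
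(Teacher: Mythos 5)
Your proposal is correct and follows essentially the same route as the paper: testing the reverse H\"older inequality on indicator functions gives the dichotomy $\mu(E)=0$ or $\mu(E)\geq (1/C)^{\frac{pq}{q-p}}$, the complement of the support carries no mass (you via a countable subcover, the paper via inner regularity and compact exhaustion), and finiteness of $\mu$ forces the support to be a finite set of atoms each of mass at least $(1/C)^{\frac{pq}{q-p}}$. The only quibble is the parenthetical claim that pairwise disjoint balls can be chosen around \emph{all} terms of an arbitrary sequence of distinct points (this fails if one of them is an accumulation point of the others); since you only ever need more than $\mu(M)\,C^{\frac{pq}{q-p}}$ points at a time, working with finitely many distinct points of the support at once (or, as in the paper, bounding the number of atoms directly by $\mu(M)(1/C)^{-\frac{pq}{q-p}}$) repairs this immediately.
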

\begin{proof}
Let $E$ be a Borel set  in $M$. Taking $\varphi= \chi_{E}$ we obtain that, either $\mu(E)=0$ or $\mu(E) \geq (\frac{1}{C})^{\frac{pq}{q-p}} $
\medskip

\noindent
We define $\mathcal{O}:= \{ x\in M: \text{for some $r>0$ }~ \mu(B_{x}(r))=0 \}$. Then $\mathcal{O}$ is open. Now if $K \subset \mathcal{O}$ is compact, then $K$ can be covered by a finite number of balls each of which has measure $0$,  therefore $\mu(K)=0$. By the regularity of the measure hence it follows that $\mu(\mathcal{O})=0$. If $x \in M \backslash {\mathcal{O}} $, then for all $r >0$ one has $\mu(B_{x}(r)) \geq (\frac{1}{C})^{\frac{pq}{q-p}}$. Then
\begin{align}
\mu(\{x \}) = \lim \limits_{m\rightarrow + \infty} \mu(B_{x}({1}/{m}))   \geq \left( \frac{1}{C} \right)^{\frac{pq}{q-p}}.
\end{align}
Since the measure $\mu$ is finite, this implies that  that the set $M \backslash {\mathcal{O}}$ is finite. So  let $M \backslash {\mathcal{O}}= \{ x_{1}, \cdots, x_{j}\}$, therefore for any borel set $E$ in $M$
\begin{align}
\mu(E)= \mu(E \cap\{ x_{1}, \cdots, x_{j}\} )= \sum \limits_{x_{i} \in E} \mu(\{x_{i}\})=  \sum \limits_{i=1}^{j} \mu(\{x_{i} \}) \delta_{x_{i}}(E).
\end{align}
Hence the lemma follows with $c_{i}= \mu(\{x_{i}\})$.
\end{proof}

\section{Topological method of Coron}\label{sec:coron}
In this section  we obtain   higher energy solutions  by Coron's topological method if  the functional $J_{P}$ does not have   a minimizer,  for the case  $f  \equiv 1$. This will complete the  proof of  the first part of Theorem \ref{main}, that is the existence of solutions to \eqref{eq:coron} with no sign-restriction. For $\mu > 0$ and $y_{0} \in \R^{n}$, we define 


 \begin{equation}
   \mathcal{B}_{y_0,\mu}(y)= \alpha_{n,k}\left( \frac{\mu}{\mu^{2}+ \left|y-y_0 \right|^2}\right)^{\frac{n-2k}{2}}
 \end{equation}
where the choice of $\alpha_{n,k}$'s are such that for all $\mu$, $ \left\|  {\mathcal{B}}_{y_0,\mu} \right\|_{L^{2_{k}^{\sharp}}}=1$ and   
$\left\|  {\mathcal{B}}_{y_0,\mu} \right\|_{\mathscr{D}^{k,2}}^{2}=\frac{ 1 }{K_{0}(n,k)} $.  These functions are the extremal functions of the Euclidean Sobolev Inequality \eqref{sinq} and they  satisfy  the equation
\begin{align}
\Delta^{k} \mathcal{B}_{y_0,\mu} = \frac{1}{K_{0}(n,k)} \mathcal{B}_{y_0,\mu}^{{2_{k}^{\sharp}}-1} \qquad \text{in} ~\R^{n}.
\end{align}.
\medskip

\noindent
Let  $\tilde{\eta}_{r} \in C_{c}^{\infty}(\R^{n})$, $0 \leq  \tilde{\eta}_{r} \leq 1$ be a smooth cut-off function, such that $\tilde{\eta} _{r} =1$ for $x \in B_0(r)$ and $\tilde{\eta} _{r} =1$ for $x \in {\R}^n \backslash B_0(2 r)$. Let $\iota_{g}>0$ be the injectivity radius of $(M,g)$. For any $p \in {M}$, we let $\eta_{p}$ be a smooth cut-off function on $M$ such that
\begin{eqnarray}
\eta_{p}(x) = \left \{ \begin{array} {lc}
                 \tilde{ \eta} _{\frac{\iota_{g}}{10}} (exp_p^{-1}(x)) \quad \text{for } ~  x \in   B_p(\iota_g) \subset M\\
                 \qquad 0 \qquad \quad \quad   \text{ \ for } ~ x \in M \backslash B_p(\iota_g).
          \end{array} \right.
\end{eqnarray}
For any $x\in M$, we define 
\begin{align}
\mathcal{B}_{p, \mu}^{M}(x)=  \eta_{p}(x) ~\mathcal{B}_{0,\mu}(exp_p^{-1}(x)).
\end{align}
$\mathcal{B}_{p, \mu}^{M}$ is the standard bubble centered at the point $p \in M$ and with radius $\mu$ 
\begin{align}
\mathcal{B}_{p,\mu}^{M}(x)=  \alpha_{n,k} \eta_{p}(x) \left( \frac{\mu}{\mu^{2}+ {d_{g}(p,x)}^2}\right)^{\frac{n-2k}{2}}.
\end{align}
\medskip

\noindent
We  have
 \begin{proposition}{\label{bubble energy}}
 Let $(M,g)$ be a  smooth, compact Riemannian manifold  of dimension $n$ and let $k$ be a positve integer such that $2k < {n}$.  
 Consider the functional $J_P$ on the space $H_{k}^{2} (M)\backslash \{0\}$. Then  the sequence of functions $\left( \mathcal{B}_{p,\mu}^{M} \right)\in C^{\infty}(M)$  defined above is such that:\\
 $$\begin{array}{lll}
 (a) & \lim \limits_{\mu  \rightarrow 0} J_P ( \mathcal{B}_{p,\mu}^{M} )= \frac{ 1 }{K_{0}(n,k)} & \text{uniformly for $p \in M$ }\\
  
(b) & \lim \limits_{\mu \rightarrow 0}\left\|\mathcal{B}_{p,\mu}^{M}\right\|_{{L^{2_{k}^{\sharp}}}} = 1 & \text{uniformly for $p \in M$}\\
 
(c) & \mathcal{B}_{p,\mu}^{M} \rightharpoonup 0~ &\text{weakly in $H_{k}^{2}(M)$, as $\mu \rightarrow 0$}
 \end{array}$$
 \end{proposition}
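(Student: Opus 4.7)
The plan is to pull every computation back to $\R^n$ via $\exp_p^{-1}$ and exploit the rescaling $y=\mu z$, which reduces $\mathcal{B}_{0,\mu}$ to the fixed bubble $\mathcal{B}_{0,1}$ up to the factor $\mu^{-(n-2k)/2}$ via the identity $\mathcal{B}_{0,\mu}(\mu z) = \mu^{-(n-2k)/2}\mathcal{B}_{0,1}(z)$. The discrepancy between $g$-integrals and Euclidean integrals is controlled by Lemma \ref{normcomparison}, and the uniformity in $p$ comes from the fact that, on the compact manifold $M$, the components of $g$ in geodesic normal coordinates depend uniformly on the base point.

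For (b), in normal coordinates at $p$ one writes
$$\int_M (\mathcal{B}_{p,\mu}^M)^{2_k^\sharp}\, dv_g = \int_{B_0(\iota_g)} \eta(y)^{2_k^\sharp}\, \mathcal{B}_{0,\mu}(y)^{2_k^\sharp}\, \sqrt{|g(y)|}\, dy,$$
and the substitution $y=\mu z$ transforms this into $\int_{B_0(\iota_g/\mu)} \eta(\mu z)^{2_k^\sharp}\mathcal{B}_{0,1}(z)^{2_k^\sharp}\sqrt{|g(\mu z)|}\, dz$, which tends to $\int_{\R^n}\mathcal{B}_{0,1}^{2_k^\sharp}\, dz=1$ by dominated convergence (using $\eta(\mu z)\to 1$ pointwise and $\sqrt{|g(\mu z)|}\to 1$). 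Taking a $2_k^\sharp$-th root gives (b), uniformly in $p$.

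For (a), by \eqref{def:upv} we have $\int_M u P(u)\, dv_g = \int_M (\Delta_g^{k/2}u)^2\, dv_g + \sum_{l<k}\int_M A_l(g)(\nabla^l u, \nabla^l u)\, dv_g$. Lemma \ref{normcomparison} applied in normal coordinates together with the scale-invariance $\|\mathcal{B}_{0,\mu}\|_{\mathscr{D}^{k,2}}^2 = 1/K_0(n,k)$ shows that the leading term converges to $1/K_0(n,k)$. The cutoff errors coming from the Leibniz expansion of $\Delta_g^{k/2}(\eta_p\mathcal{B}_{0,\mu})$ are supported in the annulus $\iota_g/10 \leq d_g(p,\cdot)\leq \iota_g/5$, where $\mathcal{B}_{0,\mu}$ and all its derivatives are pointwise $O(\mu^{(n-2k)/2})$, hence each such error is $O(\mu^{n-2k})=o(1)$. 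For the subcritical pieces $l<k$, direct rescaling gives $\int_M (\nabla^l \mathcal{B}_{p,\mu}^M)^2\, dv_g$ of order $\mu^{2(k-l)}$ when $\nabla^l\mathcal{B}_{0,1}\in L^2(\R^n)$ and of order $\mu^{n-2k}$ otherwise (the rescaled integral being truncated at $|z|=\iota_g/\mu$); both vanish. Combined with (b) for the denominator, this yields (a).

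For (c), the previous estimates show that $\|\mathcal{B}_{p,\mu}^M\|_{H_k^2(M)}$ is uniformly bounded, and the same rescaling gives $\|\mathcal{B}_{p,\mu}^M\|_{L^2(M)}\to 0$ together with pointwise decay $\mathcal{B}_{p,\mu}^M(x)=O(\mu^{(n-2k)/2})\to 0$ at every fixed $x\neq p$. Since a bounded sequence in $H_k^2(M)$ whose $L^2$-limit is $0$ must converge weakly to $0$ in $H_k^2(M)$ (any $H_k^2$-weak subsequential limit coincides with the $L^2$-limit), (c) follows. The main obstacle is the Leibniz bookkeeping in (a): one must expand $\Delta_g^{k/2}(\eta_p \mathcal{B}_{0,\mu})$ into its many cross-terms and verify that every term involving a non-trivial derivative of $\eta_p$ contributes $o(1)$ to the $L^2$-norm. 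This is routine but requires care because of the high order of the operator; once settled, (a), (b), (c) all follow from the single rescaling argument outlined above.
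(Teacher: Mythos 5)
Your proposal is correct and follows essentially the same route as the paper: pull everything back through $\exp_p$, rescale to the fixed bubble $\mathcal{B}_{0,1}$, use the concentration of the energy near $p$ together with Lemma \ref{normcomparison} to absorb the metric discrepancy, and obtain uniformity in $p$ from compactness of $M$. The only (harmless) deviations are that you dispose of the lower-order $A_l$-terms by direct scaling estimates instead of the compact embedding $H_k^2(M)\hookrightarrow H_{k-1}^2(M)$, and you cut at a fixed small radius with $\epsilon\to 0$ at the end rather than at radius $\mu R$ with $R\to+\infty$, which is the same concentration argument organized slightly differently.
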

 \smallskip\noindent{\it Proof of Proposition \ref{bubble energy}:} We claim that (c) holds. We first prove that $\mathcal{B}_{p,\mu}^{M}$ is uniformly bounded in $H_k^2(M)$. Indeed,
\begin{eqnarray*}
\sum_{\alpha\leq k}\int \limits_{ M} \left(\Delta_g^{\alpha/2}~\mathcal{B}_{p,\mu}^{M}\right)^2~{dv_g}&\leq & \sum_{\alpha\leq k}\int \limits_{ B_{p}(\iota_g/5)}\left(\Delta_g^{\alpha/2}~\mathcal{B}_{p,\mu}^{M}\right)^2~{dv_g}\\
&\leq & C\sum_{l\leq k}\int \limits_{ B_{0}(\iota_g/5)} \left|\nabla^l~\mathcal{B}_{p,\mu}^{M}\circ \hbox{exp}_p\right|^2\, dx\\
&\leq & \sum_{l\leq k}\int \limits_{ B_{0}(\iota_g/5 )} \left|\nabla^l\left(\frac{\mu}{\mu^2+|x|^2}\right)^{\frac{n-2k}{2}}\right|^2\, dx\\
&\leq & \sum_{l\leq k}\int \limits_{ B_0(\iota_g/(5\mu))} \mu^{2(k-l)}\left|\nabla^l\left(1+|x|^2\right)^{-\frac{n-2k}{2}}\right|^2\, dx.
\end{eqnarray*} 
As one checks, the right-hand-side is uniformly bounded wrt $\mu\to 0$, so $(\mathcal{B}_{p,\mu}^{M})$ is uniformly bounded wrt $p$ and $\mu\to 0$. Moreover, the above computations yield $\int_M (\mathcal{B}_{p,\mu}^{M})^2\, dv_g\to 0$ as $\mu\to 0$. Therefore, $\mathcal{B}_{p,\mu}^{M}\rightharpoonup 0$ as $\mu\to 0$ uniformly wrt $p\in M$. This proves the claim.

\medskip\noindent The space $H_{k}^{2}(M)$  is compactly  embedded in $H_{k-1}^{2}(M)$. Therefore $\mathcal{B}_{p,\mu}^{M}  \to 0$ in $H_{k-1}^{2}(M)$ as $\mu \rightarrow 0 $. Hence 
\begin{align}
\lim \limits_{\mu \rightarrow 0} \left(  \sum \limits_{l =0}^{k-1}  \int \limits_{M} A_{l}(g) (\nabla^{l}  \mathcal{B}_{p,\mu}^{M}, \nabla^{l}  \mathcal{B}_{p,\mu}^{M} )~dv_{g} \right) =0. 
\end{align}

 \noindent
 Now we estimate the term $\int \limits_{M} | ~\mathcal{B}_{p,\mu}^{M} |^{  \crit} ~dv_{g}$. We claim that 
 \begin{align}
\lim \limits_{R \rightarrow + \infty}  \lim \limits_{\mu \rightarrow 0} \int \limits_{{M}\backslash B_{p}({\mu R})} | ~\mathcal{B}_{p,\mu}^{M} |^{  \crit} ~dv_{g} =0.
 \end{align}
 We fix $R$. Now for $\mu$ sufficiently small
 \begin{align}
 \int \limits_{{M}\backslash B_{p}({\mu R})} | ~\mathcal{B}_{p,\mu}^{M} |^{  \crit} ~dv_{g} &= 
 \int \limits_{{B_{p}(\iota_{g})}\backslash B_{p}({\mu R})} | ~\mathcal{B}_{p,\mu}^{M} |^{  \crit} ~dv_{g} \notag \\
 & =  \int \limits_{{B_{0}( \iota_{g})}\backslash B_{0}({\mu R})} | ~\mathcal{B}_{p,\mu}^{M}(exp_{p} (y)) |^{  \crit}~ \sqrt{|g(exp_{p} (y)) |}~ dy  \notag\\
&  \leq \int \limits_{{B_{0}( \frac{\iota_{g}}{\mu})}\backslash B_{0}(R)} |\mathcal{B}_{0,1}(y)|^{  \crit}~ \sqrt{|g(exp_{p} ({\mu y})) |}~ dy.  
 \end{align}
 Since $\mathcal{B}_{0,1}\in L^{\crit}(\rn)$, this yields the claim.

 \medskip\noindent
 Similarly,   for $\mu$ sufficiently small
 \begin{align}
 \int \limits_{ B_{p}({\mu R}  )}  | ~\mathcal{B}_{p,\mu}^{M} |^{  \crit} ~dv_{g} & = \int \limits_{ B_{0}( \mu {R}  )} | ~\mathcal{B}_{p,\mu}^{M}(exp_{p} (y)) |^{  \crit}~ \sqrt{|g(exp_{p} (y)) |}~ dy \\
& = \int \limits_{ B_{0}(R)} | ~\mathcal{B}_{0,1}|^{  \crit}~ \sqrt{|g(exp_{p} (\mu y)) |}~ dy \\
& = \int \limits_{ B_{0}(R)} | ~\mathcal{B}_{0,1}|^{  \crit}~ dy +  o\left( \left\|\mathcal{B}_{0,1}\right\|_{{L^{2_{k}^{\sharp}}}} \right) \qquad \text{as $\mu \rightarrow 0$}
 \end{align}
 \medskip
 
 \noindent
Therefore 
\begin{align}
  \lim \limits_{\mu \rightarrow 0} \int \limits_{M} | ~\mathcal{B}_{p,\mu}^{M} |^{  \crit} ~dv_{g} =    \int \limits_{\R^{n}} | ~\mathcal{B}_{0,1} |^{  \crit} ~dv_{g}.
\end{align}
So we have $(b)$.
\medskip

 \noindent
Finally  we estimate the term $ \int \limits_{M} (\lap_g^{k/2}~\mathcal{B}_{p,\mu}^{M})^2~ dv_{g}$. We fix $R>0$. By calculating in terms of the  local coordinates  given by $exp_{p}$, we get  for $\mu$ sufficiently small
 \begin{align}
 \int \limits_{ B_{p}(\mu {R})}  (\lap_g^{k/2}~\mathcal{B}_{p,\mu}^{M})^2~ dv_{g} =  \int \limits_{B_{0}(R)} (\Delta^{k/2} \mathcal{B}_{0,1})^{2}~ dy + o\left(1\right) \qquad \text{as $\mu \rightarrow 0$}.\label{lim:1}
 \end{align}
We claim that
 \begin{align}
\lim \limits_{R \rightarrow + \infty}  \lim \limits_{\mu \rightarrow 0} \int \limits_{{M}\backslash B_{p}(\mu {R})}  (\lap_g^{k/2}~\mathcal{B}_{p,\mu}^{M})^2~ dv_{g} =0.\label{lim:2}
 \end{align}
We prove the claim. Indeed, via the exponential map at $p$, we have that 
\begin{eqnarray}
&&\int \limits_{{M}\backslash B_{p}(\mu {R})}  (\lap_g^{k/2}~\mathcal{B}_{p,\mu}^{M})^2~ dv_{g}  =  \int \limits_{{B_{p}(\iota_{g})}\backslash B_{p}(\mu {R})}  (\lap_g^{k/2}~\mathcal{B}_{p,\mu}^{M})^2~ dv_{g}\\
&& = \int \limits_{{B_{0}(\iota_{g})}\backslash B_{0}(\mu {R})}  (\lap_{exp_p^*g}^{k/2}~\mathcal{B}_{0,\mu})^2~ dv_{exp_p^*g}\\
&&\leq C \sum \limits_{|\alpha|=0}^{k} ~  \int \limits_{{B_{0}(\iota_{g})}\backslash B_{0}(\mu {R})}  |D^{\alpha}( \tilde{\eta}_{\frac{\iota_{g}}{10}} \mathcal{B}_{0,\mu})|^2~ dx  
\end{eqnarray}
Since $\mathcal{B}_{0,\mu}\to 0$ strongly in $H_{k-1,loc}^2(\rn)$, then, as $\mu\to 0$, we have that
\begin{eqnarray}
&&\int \limits_{{M}\backslash B_{p}(\mu {R})}  (\lap_g^{k/2}~\mathcal{B}_{p,\mu}^{M})^2~ dv_{g} \leq C\int \limits_{{B_{0}(\iota_{g})}\backslash B_{0}(\mu {R})}   \tilde{\eta}_{\frac{\iota_{g}}{10}} ^2|D^{k}\mathcal{B}_{0,\mu}|^2~ dx+o(1)\\
&& \leq C\int \limits_{{B_{0}(\iota_{g}/\mu)}\backslash B_{0}({R})}  |D^{k} \mathcal{B}_{0,1}|^2~ dx+o(1) \leq C\int \limits_{\rn\backslash B_{0}({R})}  |D^{k} \mathcal{B}_{0,1}|^2~ dx+o(1).
\end{eqnarray}
Since $D^k\mathcal{B}_{0,1}\in L^2(\rn)$, this yields \eqref{lim:2}. This proves the claim.

\medskip\noindent Equations \eqref{lim:1} and \eqref{lim:2} yield (a) and (b) of Proposition \ref{bubble energy} for any fixed $p\in M$. Since the manifold $M$ is compact,  we note that in the above  calculations there is no dependence on the point $p $ of the closed manifold $M$. So the convergence is uniform  for all points $p \in M$. This ends the proof of Proposition \ref{bubble energy}.\hfill$\Box$

 
 \medskip\noindent
 Fix  some $\theta$ such that   $ \frac{ 1 }{K_{0}(n,k)} + 4\theta < 2^{2k/n} \frac{ 1 }{K_{0}(n,k)} $.  Then from \eqref{bubble energy}  it follows that, there exists   $\mu_{0}$ small, such that   for all $\mu \in (0, \iota_{g} \mu_{0}) $ and  for all $p \in  M$ we have 
\begin{align}{\label{58}}
J_P ( \mathcal{B}_{p,\mu}^{M} ) \leq \frac{ 1 }{K_{0}(n,k)} + \theta.
\end{align}
\medskip\noindent We fix $x_0\in M$, and we assimilate isometrically $T_{x_0}M$ to $\R^n$, and we define the sphere $S^{n-1}:=\{x\in \R^n/\, \Vert x\Vert =1\}$. For $(\sigma, t) \in S^{n-1} \times [0,\iota_{g}/2)$, we define $\sigma_{t}^{M}: =  exp_{x_{0}}({t}\sigma)$ and
\begin{align}
u_{t}^{\sigma}(x)=    \alpha_{n,k} \eta_{\sigma_{t}^{M}}(x) \left[ \frac{   \mu_{0} (\iota_{g}/2-t)  }{\left( \mu_{0} (\iota_{g}/2-t) \right)^{2}+ {d_{g}(\sigma_{t}^{M} ,x)}^2}\right]^{\frac{n-2k}{2}}=  \mathcal{B}_{\sigma_{t}^{M},\mu_{0} (\iota_{g}/2-t)}^{M} 
\end{align}
\medskip\noindent
It then follows from our previous step and the choice of $\mu_{0}$ in $(\ref{58})$
\begin{align}
J_P ( u_{t}^{\sigma} ) \leq \frac{ 1 }{K_{0}(n,k)} + \theta ~ \qquad  \forall (\sigma, t) \in S^{n-1} \times [0,\iota_{g}/2).
\end{align}
\medskip

 \noindent
 Let $\eta \in C_{c}^{\infty}(\R^{n})$ be  a smooth, nonnegative, cut-off function such that $\eta(x)=1$ for $|x| \geq {1}/{2}$ and $\eta(x)=0$ for $ |x| <  {1}/{4}$. For $R  \geq 1$, let $\eta_{R} $ be  a smooth, nonnegative, cut-off function, such that 
\begin{eqnarray}
{\eta}_{R}(x) = \left \{ \begin{array} {lc}
              \qquad \qquad 1  \qquad  \qquad \text{if  } \quad \qquad   d_{g}(x_{0},x) \geq \frac{\iota_{g}}{10R}   \\
             \eta \left(\frac{10R}{\iota_{g}} exp_{x_{0}}^{-1} (x)  \right) \quad \text{if } \quad  \qquad d_{g}(x_{0},x)  <  \frac{\iota_{g}}{10R}
          \end{array} \right.
\end{eqnarray}
 \medskip
 
\noindent
Then the functions $\eta_{R}$ are such that $\eta_R(x)=1$ if $d_{g}(x_{0},x) \geq \frac{\iota_{g}}{20R}$ and $\eta_R(x)=0$ if $d_{g}(x_{0},x)  <  \frac{\iota_{g}}{40R}$. We define
\begin{equation}\label{def:vtr}
v_{t,R}^{\sigma}(x) := {\eta}_{R}(x) ~ u_{t}^{\sigma}(x)\hbox{ for all }x\in M.
\end{equation} 
Then we have 
\begin{proposition}\label{prop:v}
\begin{align}
\lim \limits_{R \rightarrow + \infty}  v_{t,R}^{\sigma}= u_{t}^{\sigma} ~ \text{ in } H_{k}^{2}(M) ~ \text{ uniformly }\forall (\sigma,t) \in S^{n-1} \times [0,\iota_{g}/2).
\end{align}
\end{proposition}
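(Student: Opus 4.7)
\medskip\noindent\emph{Proof proposal.} The plan is to exploit the fact that $w_{t,R}^\sigma := v_{t,R}^\sigma - u_t^\sigma = (\eta_R - 1) u_t^\sigma$ is supported in the shrinking ball $B_{x_0}(\iota_g/(20R))$. Since the $H_{k}^{2}(M)$-norm is equivalent to $u \mapsto \sum_{l=0}^{k} \|\nabla^l u\|_{L^2(M)}$, the Leibniz rule together with the scaling bound $\|\nabla^j \eta_R\|_\infty \leq C R^j$ (built into the definition of $\eta_R$) reduces the problem to a uniform pointwise estimate on the derivatives of $u_t^\sigma$ over this ball.

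The crux is to establish the existence of $R_0, C > 0$ such that
$$\sup_{(\sigma,t) \in S^{n-1}\times [0,\iota_g/2)}\, \sup_{x \in B_{x_0}(\iota_g/(20R))} |\nabla^m u_t^\sigma(x)| \leq C$$
for all $R \geq R_0$ and $0 \leq m \leq k$. I would argue by dichotomy on $t$. If $t \geq \iota_g/4$, then $d_g(\sigma_t^M, x_0) \geq \iota_g/4$, and for $R$ large the support $B_{\sigma_t^M}(\iota_g/5)$ of $\eta_{\sigma_t^M}$ is disjoint from $B_{x_0}(\iota_g/(20R))$, so $u_t^\sigma \equiv 0$ on the region of interest. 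If instead $t < \iota_g/4$, then $\lambda := \mu_0(\iota_g/2 - t) \geq \mu_0 \iota_g/4$ is bounded below, and the standard pointwise estimate
$$|\nabla^m \mathcal{B}_{0,\lambda}(y)| \leq C_m\, \lambda^{(n-2k)/2}(\lambda + |y|)^{-(n-2k)-m}$$
for the Euclidean bubble yields $|\nabla^m \mathcal{B}_{0,\lambda}| \leq C$ on bounded sets; transferring via the chart $\exp_{\sigma_t^M}$, whose metric coefficients are uniformly controlled in $C^k$ by compactness of $M$, gives the desired bound on $|\nabla^m u_t^\sigma|$.

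Granted this uniform derivative bound, the Leibniz expansion gives $|\nabla^l w_{t,R}^\sigma| \leq C R^l$ on $B_{x_0}(\iota_g/(20R))$, a ball whose volume is $\leq C R^{-n}$. Summing in $l$ from $0$ to $k$ yields
$$\|w_{t,R}^\sigma\|_{H_{k}^{2}(M)}^{2} \leq C \sum_{l=0}^{k} R^{2l} \cdot R^{-n} \leq C R^{2k-n},$$
which tends to $0$ since $2k < n$, with bound independent of $(\sigma, t)$. The decisive subtlety is precisely this $(\sigma,t)$-uniformity in the second case: the two mechanisms by which $u_t^\sigma$ could develop large derivatives near $x_0$---high concentration ($\lambda$ small) and proximity of the center $\sigma_t^M$ to $x_0$ ($t$ small)---cannot occur simultaneously, since $\lambda$ and $t$ are tied by $\lambda = \mu_0(\iota_g/2 - t)$. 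Without this linkage, a standard cutoff argument would only deliver pointwise-in-$(\sigma,t)$ convergence, so that coupling is the step I expect to require the most care.
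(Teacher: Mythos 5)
Your proposal is correct and follows essentially the same route as the paper: the difference $(\eta_R-1)u_t^\sigma$ is supported in $B_{x_0}(\iota_g/(20R))$, the cutoff satisfies $|\nabla^l\eta_R|\leq CR^{l}$, the derivatives of $u_t^\sigma$ up to order $k$ are uniformly bounded near $x_0$, and the volume bound $CR^{-n}$ yields $\Vert v_{t,R}^\sigma-u_t^\sigma\Vert_{H_k^2}^2=O(R^{2k-n})\to 0$ uniformly in $(\sigma,t)$ since $n\geq 2k+1$. Your dichotomy in $t$ (either the support of $u_t^\sigma$ misses the small ball, or $\lambda=\mu_0(\iota_g/2-t)$ is bounded below) is simply a detailed justification of the uniform $C^{2k}$-bound on $B_{x_0}(\iota_g/20)$ that the paper asserts without proof, so the two arguments coincide in substance.
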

\smallskip\noindent{\it Proof of Proposition \ref{prop:v}:} We first note  that for all $(\sigma,t) \in S^{n-1} \times [0,\iota_{g}/2)$ the functions $u_{t}^{\sigma}$ are uniformly bounded in $C^{2k}$-norm in the ball $B_{x_{0}}(\frac{\iota_{g}}{20}) \subset M$.  And for any nonnegative integer $\alpha$,  one has $|\nabla_{g}^{\alpha} \eta_{R} |_{g}  \leq C R^{ \alpha}$. Therefore 
\begin{eqnarray}
&&\left\| v_{t,R}^{\sigma}-  u_{t}^{\sigma} \right\|_{H_{k}^2}^2   = 
 \sum \limits_{\alpha =0}^{k} \int \limits_{M} ( \Delta_g^{{\alpha}/2} ( v_{t,R}^{\sigma}-  u_{t}^{\sigma}) )^2 ~{dv_g} \\
 &&= \sum \limits_{\alpha =0}^{k} \int \limits_{B_{x_{0}}(\frac{\iota_{g}}{20R})} ( \Delta_g^{{\alpha}/2} ( v_{t,R}^{\sigma}-  u_{t}^{\sigma}) )^2 ~{dv_g}\\
 &&  = \sum \limits_{\alpha =0}^{k} \int \limits_{B_{x_{0}}(\frac{\iota_{g}}{20R})} ( \Delta_g^{{\alpha}/2}( ( \eta_{R}- 1)  u_{t}^{\sigma}))^2 ~{dv_g} =  O\left( \frac{1}{R}  \right) \qquad \text{( as $n \geq 2k + 1$ )}
\end{eqnarray}
The  above convergence is uniform $w.r.t$   $(\sigma,t) \in S^{n-1} \times [0,\iota_{g}/2)$. This proves Proposition \ref{prop:v}.\hfill$\Box$

\medskip\noindent
So   it follows that, there exists $R_{0}>0$, large, such that for all $R \geq R_{0}$ one has
\begin{align}
J_P( v_{t,R}^{\sigma})  \leq J_P( u_{t}^{\sigma})  + 2 \theta < 2^{2k/n} \frac{ 1 }{K_{0}(n,k)} \qquad \forall~ (\sigma,t) \in S^{n-1} \times [0,\iota_{g}/2).
\end{align}
As one checks for any $(\sigma,t) \in S^{n-1} \times [0,\iota_{g}/2)$,  the functions $ v_{t,R_{0}}^{\sigma} \neq 0$, and has support in $ M \backslash B_{x_{0}} ({\iota_{g}}/{40R_{0}})$. Let $\epsilon_{0}>0$ be such that $M \backslash B_{x_{0}} ({\iota_{g}}/{40R_{0}})\subset M \backslash B_{x_{0}} (\epsilon_{0})$  
and we define 
\begin{equation}\label{def:omega}
\Omega_{\epsilon_{0}}:=M \backslash \overline{B_{x_{0}} (\epsilon_{0})}.
\end{equation}
Then for  any $(\sigma,t) \in S^{n-1} \times [0,\iota_{g}/2)$ the functions $v_{t,R_{0}}^{\sigma} \in H_{k,0}^{2}(\Omega_{\epsilon_{0}})\backslash \{0 \}$. 
Propositions \ref{bubble energy} and \ref{prop:v} yield
\begin{align}
\lim \limits_{t \rightarrow \iota_{g}/2} J_P( v_{t,R_{0}}^{\sigma})= \frac{ 1 }{K_{0}(n,k)} \qquad\hbox{ uniformly for all } \sigma \in S^{n-1}. 
\end{align}
\medskip\noindent
Also $v_{0,R_{0}}^{\sigma} $ is a fixed  function independent of $\sigma$ and 
\begin{align}
v_{t,R_{0}}^{\sigma} \rightharpoonup \delta_{ exp_{x_{0}}( \frac{\iota_{g}}{2}  \sigma)} \qquad \text{weakly in the sense of measures as }  t \rightarrow \iota_{g}/2.
\end{align}
 \medskip


\noindent We define $S_{k}:= K_{0}(n,k)^{-1}$. For any $c\in\mathbb{R}$, we define the sublevel sets of the functional $I_P$ on $\mathcal{N}_{\epsilon_{0}}$
\begin{align}
{\mathcal I}_{c}:= \{ u \in \mathcal{N}_{\epsilon_{0}}: I_P(u) < c  \}.
\end{align}
where $\mathcal{N}_{\epsilon_{0}} :=\{u\in H_{k,0}^2(\Omega_{\epsilon_0})/\, \Vert u\Vert_{\crit}=1\}$.
\begin{proposition}\label{prop:center of mass}
Suppose  $I_P(u) >  \frac{1}{K_{0}(n,k)} $ for all $u \in \mathcal{N}_{\epsilon_{0}}  $, then there exists $\sigma_{0} >0$ for which  there exists a continuous map
\begin{align}
\Gamma: {\mathcal I}_{S_{k} + \sigma_{0}} \longrightarrow {\overline{\Omega}}_{\epsilon_{0}} 
\end{align}
such that if $(u_{i}) \in  {\mathcal I}_{S_{k} + \sigma_{0}} $ is a sequence such that $| u_{i} |^{  \crit} ~dv_{g} \rightharpoonup  \delta_{p_{0}}$ weakly in the sense of measures, for some point $p_{0} \in {\overline{\Omega}}_{\epsilon_{0}} $, then 
\begin{align}
\lim \limits_{i \rightarrow + \infty}\Gamma(u_{i})= p_0.
\end{align}
\end{proposition}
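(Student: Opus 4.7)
The plan is to construct $\Gamma$ as a center-of-mass map via a Euclidean embedding. Fix a smooth embedding $\Phi:M\hookrightarrow\R^N$, a tubular neighborhood $U$ of $\Phi(M)$, and the associated smooth nearest-point projection $\pi:U\to\Phi(M)$. For $u\in\mathcal{N}_{\epsilon_0}$, set
\begin{equation*}
\widetilde{\Gamma}(u):=\int_M \Phi(x)\,|u(x)|^{\crit}\,dv_g(x)\in\R^N,
\end{equation*}
which depends continuously on $u$ in the $L^{\crit}$-norm, hence in the $H_k^2$-norm. The map $\Gamma$ will be defined as $\Gamma:=r\circ\Phi^{-1}\circ\pi\circ\widetilde{\Gamma}$, where $r$ is a continuous retraction onto $\overline{\Omega}_{\epsilon_0}$ chosen afterwards.

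The crux is to show that $\sigma_0>0$ can be chosen so that $\widetilde{\Gamma}(u)\in U$ for every $u\in\mathcal{I}_{S_k+\sigma_0}$. Argue by contradiction: otherwise one could extract a sequence $(u_m)\subset\mathcal{N}_{\epsilon_0}$ with $I_P(u_m)\to S_k$ such that $\widetilde{\Gamma}(u_m)$ stays bounded away from $\Phi(M)$. Apply Theorem \ref{th:CC} to pass, up to a subsequence, to a weak limit $u\in H_k^2(M)$ and measures
\begin{equation*}
|u_m|^{\crit}\,dv_g\rightharpoonup |u|^{\crit}\,dv_g+\sum_j\alpha_j\delta_{x_j},\qquad (\Delta_g^{k/2}u_m)^2\,dv_g\rightharpoonup\mu,
\end{equation*}
with $\mu\geq (\Delta_g^{k/2}u)^2\,dv_g+\sum_j\beta_j\delta_{x_j}$ and $S_k\alpha_j^{2/\crit}\leq\beta_j$. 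Since the lower-order terms of $I_P$ converge by the compact embedding $H_k^2(M)\hookrightarrow H_{k-1}^2(M)$,
\begin{equation*}
S_k=\lim_m I_P(u_m)\geq I_P(u)+\sum_j\beta_j,\qquad \|u\|_{\crit}^{\crit}+\sum_j\alpha_j=1.
\end{equation*}
Write $a:=\|u\|_{\crit}^{\crit}\in[0,1]$. The case $a=1$ contradicts the hypothesis via $S_k\geq I_P(u)>S_k$. For $a\in(0,1)$, rescaling $u/\|u\|_{\crit}\in\mathcal{N}_{\epsilon_0}$ gives $I_P(u)>S_k a^{2/\crit}$; combined with $\sum_j\beta_j\geq S_k\sum_j\alpha_j^{2/\crit}\geq S_k(1-a)^{2/\crit}$ (the second inequality being subadditivity of $t\mapsto t^{2/\crit}$ for $2/\crit<1$) and the strict inequality $a^{2/\crit}+(1-a)^{2/\crit}>1$, one obtains $S_k>S_k$, again impossible. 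Hence $a=0$, and the equality case in subadditivity forces exactly one $\alpha_{j_0}=1$, so $|u_m|^{\crit}\,dv_g\rightharpoonup\delta_{x_{j_0}}$ with $x_{j_0}\in\overline{\Omega}_{\epsilon_0}$. Consequently $\widetilde{\Gamma}(u_m)\to\Phi(x_{j_0})\in\Phi(M)$, contradicting the choice of $(u_m)$.

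Once $\widetilde{\Gamma}$ takes values in $U$, the map $\Gamma_0:=\Phi^{-1}\circ\pi\circ\widetilde{\Gamma}:\mathcal{I}_{S_k+\sigma_0}\to M$ is continuous, and the sequence property $\Gamma_0(u_i)\to p_0$ whenever $|u_i|^{\crit}\,dv_g\rightharpoonup\delta_{p_0}$ is immediate. By the same concentration-by-contradiction argument with $\sigma_0$ possibly shrunk once more, one ensures $\Gamma_0(u)\notin\overline{B_{x_0}(\epsilon_0/2)}$ on $\mathcal{I}_{S_k+\sigma_0}$; composing with any continuous retraction $r:M\setminus\overline{B_{x_0}(\epsilon_0/2)}\to\overline{\Omega}_{\epsilon_0}$ (which exists by radial push in $\exp_{x_0}$-coordinates) produces the desired $\Gamma$. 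The main obstacle is the concentration step: extracting a single-point Dirac limit out of the weak strict inequality $I_P>S_k$ requires combining the Concentration-Compactness bound $S_k\alpha_j^{2/\crit}\leq\beta_j$ with the strict subadditivity of $t^{2/\crit}$, exploiting precisely the fact that the critical threshold $S_k$ is not attained on $\mathcal{N}_{\epsilon_0}$.
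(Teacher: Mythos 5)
Your proposal is correct and follows essentially the same route as the paper: a barycenter map built from a Whitney embedding, composed with the tubular-neighbourhood projection and a retraction of $M\setminus \overline{B_{x_0}(\epsilon_0/2)}$ onto $\overline{\Omega}_{\epsilon_0}$, with the key point being that sequences in $\mathcal{N}_{\epsilon_0}$ whose energy tends to $S_k$ concentrate at a single Dirac mass in $\overline{\Omega}_{\epsilon_0}$. The only difference is that where the paper simply invokes the dichotomy of Lemma \ref{lem:min} (no minimizer forces single-point concentration), you re-derive that dichotomy inline from Theorem \ref{th:CC} via the scaling inequality $I_P(u)>S_k\Vert u\Vert_{\crit}^2$ and strict subadditivity of $t\mapsto t^{2/\crit}$, which is a correct, self-contained version of the same argument.
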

\medskip\noindent{\it Proof of Proposition \ref{prop:center of mass}:} By the Whitney embedding theorem,  the manifold $M$ admits a smooth embedding into $\R^{2n+1}$.  If we denote this embedding  by $\mathcal{F}: M \rightarrow \R^{2n+1}$, then $M$ is diffeomorphic to $\mathcal{F}(M)$  where $\mathcal{F}(M)$ is an embedded submanifold of $\R^{2n+1}$. For $u \in \mathcal{N}_{\epsilon_{0}}$, we define
\begin{align}
\tilde{\Gamma}(u):=  \int \limits_{\Omega_{M}} \mathcal{F}(x)~ |u(x)|^{2_{k}^{\sharp}}~ dv_{g}(x).
\end{align}
Then  $\tilde{\Gamma}: \mathcal{N}_{\epsilon_{0}} \rightarrow \R^{2n+1} $ is continuous. Next we claim that for every $\epsilon >0 $ there exists a $\sigma >0$ such that
\begin{align}
u \in  {\mathcal I}_{S_{k} + \sigma} ~\Rightarrow ~ dist\left( \tilde{\Gamma}(u), \mathcal{F}({\overline{\Omega}}_{\epsilon_{0}} )\right) < \epsilon.
\end{align}
\medskip

\noindent
Suppose that the claim is not true, then there  exists an ${\epsilon}^{\prime} >0$ and  a sequence $(u_{i}) \in \mathcal{N}_{\epsilon_{0}}$, such that $\lim \limits_{i \rightarrow + \infty} I_P(u_{i}) =  S_{k}$  and $dist\left( \tilde{\Gamma}(u), \mathcal{F}({\overline{\Omega}}_{\epsilon_{0}} )\right)   \geq {\epsilon}^{\prime}$. Since there is no minimizer for $I_P$ on $\mathcal{N}_{\epsilon_{0}}$, it follows from Lemma \ref{lem:min} that for such a sequence $(u_{i})$ there exists a point $p_{0} \in \overline{\Omega}_{\epsilon_{0}}$ such that
$ | u_{i} |^{  \crit} dv_{g} \rightharpoonup  \delta_{p_{0}} ~ \text{weakly in the sense of measures}$. So $\tilde{\Gamma}(u_{i}) \rightarrow \mathcal{F}(p_{0})$, a contradiction since  $dist\left( \tilde{\Gamma}(u), \mathcal{F}({\overline{\Omega}}_{\epsilon_{0}} )\right)  \geq {\epsilon}^{\prime}$. This proves our claim. 
\medskip

\noindent
By the Tubular Neighbourhood Theorem, the embedded  submanifold $\mathcal{F}(M)$ has a tubular neighbourhood $\mathcal{U}$ in $\R^{2n+1}$ and there exists a smooth retraction
\begin{align}
\pi: \mathcal{U} \longrightarrow \mathcal{F}(M).
\end{align} 
Choose an $\epsilon_{0} >0$ small so that $\{ y \in \R^{2n+1} : dist\left( y, \mathcal{F}(M)\right) < \epsilon_{0} \} \subset \mathcal{U}$ . Then from our previous  claim it follows that, there exists $\sigma_{0}>0$ such that 
\begin{align}
 u \in    {\mathcal I}_{S_{k} + \sigma_{0}} ~\Rightarrow ~ \tilde{\Gamma}(u) \in  \mathcal{U}.
\end{align}
We define 
\begin{align}
{\Gamma}_M(u)= \mathcal{F}^{-1} \circ \pi \left(~ \int \limits_{M} \mathcal{F}(x)~ |u(x)|^{2_{k}^{\sharp}}~ dv_{g}(x)  \right). 
\end{align}
Then  the map $\Gamma_M:  {\mathcal I}_{S_{k} + \sigma_{0}} \to M$ is continuous.  Similarly as in our previous claim  we have:    for every $\epsilon >0 $ small  there exists $\delta >0$ such that
\begin{align}
u \in  {\mathcal I}_{S_{k}+ \delta} ~\Rightarrow ~d_{g}\left( {\Gamma}_{M}(u), {\overline{\Omega}}_{\epsilon_{0}} \right) < \epsilon.
\end{align}

\medskip\noindent Let $\pi^{{\overline{\Omega}}_{\epsilon_{0}}}: M \backslash B_{x_{0}}(\epsilon_{0}/2) \longrightarrow {\overline{\Omega}}_{\epsilon_{0}}$be a  retraction. Choose an $\epsilon^{\prime} >0$ small so that $\{ p \in M : d_{g}\left( p, {\overline{\Omega}}_{\epsilon_{0}}\right) < \epsilon^{\prime} \} \subset M \backslash B_{x_{0}}(\epsilon_{0}/2) $ . Then from our claim it follows that there exists a $\delta_{0}>0$ such that $ {\Gamma}_{M}(u) \in M \backslash B_{x_{0}}(\epsilon_{0}/2)$ for all $u \in {\mathcal I}_{S_{k} + \delta_{0}}$. So for $u \in {\mathcal I}_{S_{k} + \delta_{0}}$  we define ${\Gamma}(u):= \pi^{{\overline{\Omega}}_{\epsilon_{0}}} \circ {\Gamma}_{M}(u)$. Then the map $\Gamma$ satisfies the hypothesis of the proposition. This proves Proposition \ref{prop:center of mass}. This proposition is in the spirit of Proposition 4.4 of \cite{malchiodi}\hfill$\Box$

\medskip\noindent
Now we proceed to prove the first part of Theorem \ref{main}.  By the regularity result obtained in  Theorem \ref{main regularity},  it is sufficient  to show the existence of a non-trivial $H^{2}_{k,0}(\Omega_{\epsilon_{0}})$    weak solution to the equation  (see \eqref{weak:sol} for the definition)
 \begin{eqnarray}\label{eq:sol}
 \left \{ \begin{array} {lc}
          Pu  = \left| u \right|^{ \crit - 2} u \qquad  \text{in} ~ \Omega_{M}\\
          D^{\alpha} u=0  \ \  \  \qquad \qquad\text{on } ~\partial \Omega_{M} \quad \text{for} \ \ |\alpha| \leq k-1
            \end{array} \right. 
\end{eqnarray}
Suppose on the contrary the above equation only admits trivial solutions, we will show that this leads to a  contradiction. 


\medskip\noindent
Now suppose that  the functional $ {I}_P$ has no critical point in  $\mathcal{N}_{{\epsilon_{0}}}$, that is there is not weak solution to \eqref{eq:sol}. This is equivalent to the assertion that the functional 
\begin{equation}\label{def:F:P}
F_{P}(u) =  \frac{1}{2}\int \limits_{{{\Omega}}_{\epsilon_{0}}} u  P(u)~ dv_{g}-\frac{1}{{  \crit}}~ \int \limits_{{{\Omega}}_{\epsilon_{0}}} |u|^{  \crit}~ dv_{g}. 
\end{equation}
does not admit a nontrivial  critical point in $H_{k,0}^{2}({{\Omega}}_{\epsilon_{0}} )$. 
\begin{proposition}\label{prop:ps:cond}
If equation $(\ref{eq:sol})$ admits only the  trivial solution $u \equiv 0$, then the functional $I_P$ satisfies the $(P.S)_{c}$ condition  for $c \in (S_{k},  2^{\frac{2 k}{n} }S_{k})$. 
\end{proposition}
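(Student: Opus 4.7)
Take a Palais--Smale sequence $(u_m)\subset\mathcal{N}_{\epsilon_0}$ at level $c$: $I_P(u_m)\to c$ and, with Lagrange multiplier $\lambda_m := I_P(u_m)\to c$, the sequence $Pu_m-\lambda_m|u_m|^{\crit-2}u_m$ tends to $0$ in $(H_{k,0}^2(\Omega_{\epsilon_0}))^*$. The plan is to show that under the no-nontrivial-solution hypothesis, the only possible PS levels (with zero weak limit) are $N^{2k/n}S_k$ for positive integers $N$; since the open interval $(S_k, 2^{2k/n}S_k)$ contains no such value, $(P.S)_c$ will hold vacuously.

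By coercivity of $P$, $(u_m)$ is bounded in $H_k^2$. Extract $u_m \rightharpoonup u$ weakly in $H_k^2$ and strongly in $H_{k-1}^2$ by Rellich. Passing to the weak limit in the equation, $u$ solves $Pu = c|u|^{\crit-2}u$ weakly, so $v := c^{(n-2k)/(4k)}u$ weakly solves \eqref{eq:sol}; by hypothesis $v\equiv 0$, hence $u\equiv 0$.

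Apply Theorem~\ref{th:CC} to $(u_m)$. With $u=0$ one has $\nu = \sum_i \alpha_i\delta_{x_i}$ and $\mu \geq \sum_i\beta_i\delta_{x_i}$ with $\beta_i\geq S_k\alpha_i^{2/\crit}$; the normalization $\int|u_m|^{\crit}\,dv_g = 1$ and compactness of $M$ give $\sum_i\alpha_i = 1$. Testing the PS relation against $\varphi_i^2 u_m$ for smooth cutoffs $\varphi_i$ supported in shrinking balls around $x_i$, using the quadratic form \eqref{def:upv} together with strong $H_{k-1}^2$-convergence to absorb the tensorial terms $A_l(g)$ and the commutators arising from $\varphi_i^2$, yields $\beta_i = c\alpha_i$ at every concentration point (and the absence of a diffuse part of $\mu$ beyond the atoms, since any such part must be absolutely continuous with respect to the diffuse part of $\nu$, which vanishes as $u=0$). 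Combined with the Sobolev bound this gives $\alpha_i \geq (S_k/c)^{n/(2k)}$, so the number $N$ of concentration points satisfies $N\leq (c/S_k)^{n/(2k)}<2$. The case $N=0$ would contradict $\sum_i\alpha_i = 1$, so $N=1$ and $\alpha_1=1$.

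Blowing up $u_m$ around $x_1$ at its concentration scale $\lambda_m\to\infty$, via $\widetilde u_m(y) := \lambda_m^{-(n-2k)/2}u_m(\exp_{x_1^m}(y/\lambda_m))$, produces a nontrivial limit $U\in\mathscr{D}^{k,2}(\R^n)$ solving $\Delta^k U = c|U|^{\crit-2}U$ with $\|U\|_{L^{\crit}(\R^n)}=1$. Testing this limit equation against $U$ gives $\|U\|_{\mathscr{D}^{k,2}}^2 = c$; the Lions--Swanson classification recalled at the start of Section~\ref{sec:best} identifies $U$ as a rescaling of a standard bubble, for which the unit-$L^{\crit}$ normalization forces $\|U\|_{\mathscr{D}^{k,2}}^2 = S_k$. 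Hence $c = S_k$, contradicting $c>S_k$. The principal obstacle is carrying out this blow-up rigorously for the higher-order operator $P$ on a manifold: one must verify that under the rescaling both the lower-order tensorial terms $A_l(g)$ in $P$ and the metric-to-Euclidean deviation on shrinking balls are suppressed, so that only $\Delta_g^k$ survives and limits to the flat $\Delta^k$, and that the rescaling scale is precisely the one that normalizes the $L^{\crit}$-mass at $x_1$ to unity.
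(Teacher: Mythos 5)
Your overall strategy (pass to the free functional, show the weak limit vanishes by the no-solution hypothesis, run concentration--compactness, then blow up and classify the limit profile) is a reasonable alternative to the paper's argument, which instead quotes wholesale the Struwe-type decomposition for polyharmonic operators from the author's companion paper and then the energy classification of Ge--Wei--Zhou (their Lemmas 3 and 5). However, as written your endgame has a genuine gap. The blow-up limit $U$ of a Palais--Smale sequence at level $c\in(S_k,2^{2k/n}S_k)$ is only a (possibly sign-changing) solution of $\Delta^k U=c|U|^{\crit-2}U$ in $\mathscr{D}^{k,2}(\R^n)$; the Lions--Swanson result recalled at the beginning of Section \ref{sec:best} classifies the \emph{extremals of the Sobolev inequality}, i.e.\ the positive bubbles, and does not apply to an arbitrary solution of the limiting equation. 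Sign-changing solutions do exist, and ruling them out at this energy level is exactly where the upper bound $2^{2k/n}S_k$ enters: one needs the energy-gap result (nodal solutions have energy at least twice the ground-state energy, Lemma 5 of Ge--Wei--Zhou), which your proof never invokes. Without it, the conclusion ``$U$ is a rescaled bubble, hence $c=S_k$'' is unjustified, and the contradiction collapses precisely in the case the proposition is designed to exclude.

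Two further points would also need repair. First, since $u_m\in H^2_{k,0}(\Omega_{\epsilon_0})$, the concentration point may lie on $\partial\Omega_{\epsilon_0}$; the blow-up then produces a nontrivial Dirichlet solution on a half-space, for which you need a Pohozaev-type nonexistence statement (this case is explicitly covered by the decomposition the paper cites), not a classification of entire extremals. Second, your assertion that a single rescaling around $x_1$ yields a limit with $\Vert U\Vert_{L^{\crit}}=1$ is not automatic: several profiles can concentrate at the same point at different scales, so the first extracted bubble need not carry the whole mass $\alpha_1=1$; controlling the total energy of all profiles is the content of the Struwe decomposition, which you are in effect re-proving but only sketch. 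In short, your route can be made to work, but only by importing the same two ingredients the paper uses (the full profile decomposition, including half-space profiles, and the Ge--Wei--Zhou energy dichotomy for sign-changing solutions); substituting the Lions--Swanson classification for them is not valid.
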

\noindent{\it Proof of Proposition \ref{prop:ps:cond}:} Let $(v_{i})  \in \mathcal{N}_{\epsilon_{0}}  $ be a Palais-Smale sequence for the functional $I_P$ such that $\lim \limits_{i \rightarrow + \infty} I_P(v_{i}) = c \in (S_{k},  2^{\frac{2 k}{n} }S_{k})$, if this exists. Define $u_{i}:= (I_P(v_{i}))^{\frac{1}{2_{k}^{\sharp}-2}}v_{i}$. Then $(u_{i})$  is  a Palais-Smale sequence for the functional $F_P$ on the space $ H_{k,0}^2({{\Omega}}_{\epsilon_{0}} )$ such that $\lim \limits_{i \rightarrow + \infty} F_{P}(u_{i})  \in (\frac{k}{n} S_{k}^{n/2k},  \frac{2 k}{n} S_{k}^{n/2k} )$. Since there is no nontrivial solution to \eqref{eq:sol}, it follows from the Struwe-decomposition for polyharmonic operators by the author \cite{mazumdar2} that there exists  $ d \in \N$  non-trivial functions $u^j \in \mathscr{D}^{k,2}(\R^n)$,  $j= 1,\ldots,d$, such that upto a subsequence the following holds
\begin{align}
\displaystyle  F_P(u_i)=   \sum \limits_{j=1}^d ~E(u^j) + o(1)  \qquad \text{ as $i \rightarrow + \infty$}
\end{align}
where $E(u):=  \frac{1}{2}\int \limits_{\R^n} (\Delta^{k/2} u)^2 {dx}  
 - \frac{1}{\crit} \int \limits_{\R^n} | u |^{ \crit} {dx}$. The $u^j$'s are nontrivial solutions in $\mathscr{D}^{k,2}(\R^n)$ to $ \Delta^{k} u = |u|^{ \crit-2} u$ on $\R^n$ or on $\{x\in \R^n/\, x_1<0\}$ with Dirichlet boundary condition (we refer to \cite{mazumdar2} for details). It then follows from Lemma 3 and 5 of Ge-Wei-Zhou \cite{gwz} that for any $j$, either $u^j$ has fixed sign and $E(u) = \frac{k}{n} S_k^{n/2k}$, or $u^j$ changes sign and $E(u) \geq \frac{2k}{n} S_k^{n/2k}$, contradicting $\lim \limits_{i \rightarrow + \infty} F_{P}(u_{i})  \in (\frac{k}{n} S_{k}^{n/2k},  \frac{2 k}{n} S_{k}^{n/2k} )$. Therefore the Palais-Smale condition holds at level $c \in (S_{k},  2^{\frac{2 k}{n} }S_{k})$. More precisely, there is even no Palais-Smale sequence at this level. This ends the proof of Proposition \ref{prop:ps:cond}.\hfill$\Box$




\medskip\noindent {\it Proof of Theorem \ref{main}:} By the Deformation Lemma (see Theorem II.3.11 and Remark II.3.12 in the monograph by Struwe \cite{struwe}), there exists an retraction $\beta: {\mathcal I}_{S_{k}+4\theta } \longrightarrow I^{k}_{S_{k}+ \sigma_{0}}$, where $\sigma_{0}$ is  as  given in Proposition \ref{prop:center of mass}. Let $r_{{\mathcal{N}}_{\epsilon_{0}}} : H_{k,0}^{2}({{\Omega}}_{\epsilon_{0}}) \backslash\{0\}\rightarrow \mathcal{N}_{\epsilon_{0}}$ be the projection given by $u \mapsto \frac{u}{\left\| u\right\|_{L^{\crit}} }$. Consider the map $h:S^{n-1} \times [0,\iota_{g}/2] \rightarrow {\overline{\Omega}}_{\epsilon_{0}} $ given by 
\begin{eqnarray}
h(\sigma,t) := \left \{ \begin{array} {lc}
             \Gamma  \circ  \beta   (r_{{\mathcal{N}}_{\epsilon_{0}}}(v_{t,R_{0}}^{\sigma}) ) \quad \text{ for   } \quad \qquad   t < \iota_{g}/2   \\
              \sigma_{ \iota_{g} / 2  }^{M} \qquad  \qquad  \  \quad   \quad   \ \text{ for }  \quad  \qquad  t= \iota_{g}/2
          \end{array} \right.
\end{eqnarray}
where $\sigma_{t}^{M} :=  exp_{x_{0}}({t}\sigma)$. This map is well defined and continuous by  Proposition \ref{prop:center of mass} and there exists $p_0\in {\overline{\Omega}}_{\epsilon_{0}}$ such that  
\begin{eqnarray}
h(\sigma,t) = \left \{ \begin{array} {ll}
             p_0 & \text{ for   }  \qquad   t =0 \\
            exp_{x_{0}}( \frac{\iota_{g} } {2} \sigma   )&  \text{ for }   \qquad  t = \iota_{g}/2 
          \end{array} \right.
\end{eqnarray} 
\medskip

\noindent
So we obtain a homotopy of the embedded $(n-1)-$ dimensional sphere $\{    exp_{x_{0}}( \frac{\iota_{g} } {2} \sigma   ) : \sigma \in S^{n-1}\}$to a point in 
${{\Omega}}_{\epsilon_{0}} $,  which is a contradiction to our topological  assumption. This proves Theorem \ref{main} for potentially sign-changing solutions. 
 
 \section{Positive solutions}\label{sec:pos}
This section is devoted to the second part of Theorem \ref{main}, that is the existence of positive solutions. The proof is very similar to the proof of Theorem \ref{main} with no restriction on the sign. We just stress on the specificities and refer to the proof of Theorem \ref{main} everytime it is possible.
\smallskip\noindent We let $\Omega_M\subset M$ be any smooth $n-$dimensional submanifold of $M$, possibly with boundary. In the sequel, we will either take $\Omega_M=M$, or $M\setminus \overline{B_{x_0}(\epsilon_0)}$. For $u \in H_{k,0}^{2}(\Omega_{M})$, we define $u^{+}:= \max \{u,0 \}$, $u^{-}:= \max \{-u,0 \}$ and 
\begin{align}
 \mathcal{N}_{+} := \{  u \in H^{2}_{k,0}(\Omega_{M}):  \int \limits_{\Omega_{M}} (u^{+})^{\crit} ~dv_{g} = 1\}.
 \end{align}
which is a codimension 1 submanifold of $H_{k,0}^{2}(\Omega_{M})$. Any critical point $u\in H_{k,0}^{2}(\Omega_{M})$ of $I_g$ on $\mathcal{N}_+$ is a weak solution to
\begin{equation}\label{eq:u:plus}
Pu = u_+^{\crit-1}\hbox{ in }\Omega_M\; ; \, D^\alpha u=0\hbox{ on }\partial \Omega_M\hbox{ for }|\alpha|\leq k-1.
\end{equation} 
Consider the Green's function $G_{P}$  associated to  the operator $P$   with Dirichlet boundary condition on the smooth domain $\Omega_{M}\subsetneq M$, which is a function $G_{P} : \Omega_{M} \times \Omega_{M} \backslash \{(x,x) : x \in \Omega_{M}\} \longrightarrow  \R$ such that
\begin{enumerate}
\item[(i)] For any $x \in \Omega_{M}$,  the function $G_{P}(x,\cdot) \in L^{1}(\Omega_{M})$\\
\item[(ii)] For any $\varphi \in C^\infty(\overline{\Omega_{M}})$ such that $D^\alpha \varphi=0$ on $\partial\Omega_M$ for all $|\alpha|\leq k-1$, we have that 
\begin{align}
{\varphi}(x)= \int \limits_{\Omega_{M}} G_{P}(x,y) ~  P\varphi(y)~ dv_{g}(y).
\end{align}
\end{enumerate}

\begin{lemma}\label{lem:pos} Let $(u_{i}) \in \mathcal{N}_{+} $ be a minimizing sequence for $I^{k}_g$ on $\mathcal{N}_+$. Then
\begin{itemize}
\item[(i)] Either there exists $u_0\in \mathcal{N}_+$ such that $u_i\to u_0$ strongly in $H_{k,0}^{2}(\Omega_{M})$, and $u_0$ is a minimizer of $I_P$ on $\mathcal{N}_+$
\item[(ii)] Or there exists $x_0\in \overline{\Omega_M}$ such that $|u_i|^{\crit}\, dv_g\rightharpoonup \delta_{x_0}$ as $i\to +\infty$ in the sense of measures. Moreover, $\inf \limits_{u\in \mathcal{N}_{+} } I_P(u)=\frac{1}{K_0(n,k)}$.
\end{itemize}
\end{lemma}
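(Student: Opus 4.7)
The plan is to mimic the proof of Lemma~\ref{lem:min}, with the one essential modification that the $L^{\crit}$-constraint $\int_{\Omega_M}(u^+)^{\crit}\,dv_g=1$ now concerns only the positive part, so the concentration analysis has to distinguish between the positive and negative masses of $u_i$. Set $\alpha:=\inf_{\mathcal{N}_+}I_P$. Coercivity of $I_P$ makes $(u_i)$ bounded in $H_{k,0}^2(\Omega_M)$; after extraction I obtain $u_0\in H_{k,0}^2(\Omega_M)$ with $u_i\rightharpoonup u_0$ in $H_k^2(M)$, $u_i\to u_0$ strongly in $H_{k-1}^2(M)$, and $u_i^+\to u_0^+$ almost everywhere. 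Applying Theorem~\ref{th:CC} to $(u_i)$ produces the usual decompositions
\begin{equation*}
\nu=|u_0|^{\crit}\,dv_g+\sum_{j}\alpha_j\,\delta_{x_j},\qquad \mu\geq(\Delta_g^{k/2}u_0)^2\,dv_g+\sum_j\beta_j\,\delta_{x_j},
\end{equation*}
with $\alpha_j^{2/\crit}\leq K_{0}(n,k)\,\beta_j$.

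I next identify the weak limit $\tilde\nu$ of $(u_i^+)^{\crit}\,dv_g$. The pointwise domination $(u_i^+)^{\crit}\leq|u_i|^{\crit}$ gives $\tilde\nu\leq\nu$, so the singular part of $\tilde\nu$ sits inside $\{x_j\}$; away from $\{x_j\}$ the measure $\nu$ is absolutely continuous, so $u_i\to u_0$ in $L^{\crit}_{\mathrm{loc}}$ there, identifying the Lebesgue part of $\tilde\nu$ as $(u_0^+)^{\crit}\,dv_g$. Hence
\begin{equation*}
\tilde\nu=(u_0^+)^{\crit}\,dv_g+\sum_j\tilde\alpha_j\,\delta_{x_j},\qquad 0\leq\tilde\alpha_j\leq\alpha_j,
\end{equation*}
and the constraint $u_i\in\mathcal{N}_+$ becomes $t+\sum_j\tilde\alpha_j=1$ with $t:=\int_{\Omega_M}(u_0^+)^{\crit}\,dv_g\in[0,1]$.

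The compactness of the lower-order part of $P$ yields $\alpha=I_P(u_0)+\mu(M)-\int_{\Omega_M}(\Delta_g^{k/2}u_0)^2\,dv_g$, so using $\alpha_j^{2/\crit}\leq K_{0}(n,k)\beta_j$ and the subadditivity of $s\mapsto s^{2/\crit}$,
\begin{equation*}
\alpha\geq I_P(u_0)+\frac{1}{K_{0}(n,k)}\sum_j\tilde\alpha_j^{2/\crit}\geq I_P(u_0)+\frac{1}{K_{0}(n,k)}(1-t)^{2/\crit}.
\end{equation*}
Testing $I_P$ on normalized positive bubbles $\mathcal{B}_{p,\mu}^M$ supported in $\Omega_M$ (Proposition~\ref{bubble energy}) gives the upper bound $\alpha\leq 1/K_{0}(n,k)$. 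If $t\in(0,1]$, rescaling $u_0/t^{1/\crit}\in\mathcal{N}_+$ gives $I_P(u_0)\geq\alpha\,t^{2/\crit}$, so $\alpha(1-t^{2/\crit})\geq (1-t)^{2/\crit}/K_{0}(n,k)$; coupled with the upper bound on $\alpha$ this forces $1-t^{2/\crit}\geq(1-t)^{2/\crit}$, which is false on $(0,1)$ by strict subadditivity of $s\mapsto s^{2/\crit}$. Hence $t=1$. If instead $t=0$, then $u_0\leq 0$ almost everywhere and the inequality forces $I_P(u_0)\leq 0$; coercivity then gives $u_0\equiv 0$ and $\alpha=1/K_{0}(n,k)$.

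In case (i) ($t=1$), $u_0\in\mathcal{N}_+$ is a minimizer and all $\beta_j,\alpha_j$ vanish, so $\int(\Delta_g^{k/2}u_i)^2\,dv_g\to\int(\Delta_g^{k/2}u_0)^2\,dv_g$; together with weak $L^2$-convergence and the strong $H_{k-1}^2$-convergence this upgrades to strong $H_k^2$-convergence. In case (ii) ($t=0$), I have $\sum_j\alpha_j^{2/\crit}\leq K_{0}(n,k)\alpha=1$, while subadditivity gives $\sum_j\tilde\alpha_j^{2/\crit}\geq(\sum_j\tilde\alpha_j)^{2/\crit}=1$; monotonicity $\tilde\alpha_j\leq\alpha_j$ then saturates $\sum_j\tilde\alpha_j^{2/\crit}=\sum_j\alpha_j^{2/\crit}=1$, so $\tilde\alpha_j=\alpha_j$ for every $j$ and exactly one atom $x_0$ carries mass $\tilde\alpha_{x_0}=\alpha_{x_0}=1$, yielding $\nu=\delta_{x_0}$. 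The main technical obstacle is the identification of $\tilde\nu$ in the second paragraph: it relies on $L^{\crit}_{\mathrm{loc}}$-convergence of $u_i$ away from the concentration set of $\nu$, a standard consequence of concentration-compactness, but one that must be handled without appealing to a Sobolev structure for $u_i^+$ itself, since for $k\geq 2$ the positive part need not lie in $H_k^2$.
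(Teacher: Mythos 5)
Your argument is correct in substance, but it takes a genuinely different route from the paper's. The paper first proves, as a separate claim, that $u_i^-\to u_0^-$ strongly in $L^{\crit}(\Omega_M)$: it combines the splitting $I_P(u_i)=\int_{\Omega_M}(\Delta_g^{k/2}v_i)^2\,dv_g+I_P(u_0)+o(1)$ with the almost-sharp inequality \eqref{eq:eps}, the elementary inequality $1-a^{\crit/2}\geq(1-a^2)^{\crit/2}$, Brezis--Lieb, and the bound $\alpha K_0(n,k)\leq 1$. This shows the atoms of $\nu$ are carried entirely by the positive parts, i.e. $(u_i^+)^{\crit}dv_g\rightharpoonup |u_0^+|^{\crit}dv_g+\sum_j\alpha_j\delta_{x_j}$ with the \emph{same} weights $\alpha_j$, after which the dichotomy is read off exactly as in Lemma~\ref{lem:min}. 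You instead allow the positive-part weights $\tilde\alpha_j\leq\alpha_j$ to be smaller, rule out the intermediate case $0<t<1$ by the rescaling $u_0/t^{1/\crit}\in\mathcal{N}_+$ together with strict subadditivity of $s\mapsto s^{2/\crit}$, and in the case $t=0$ recover $\tilde\alpha_j=\alpha_j$ a posteriori from $\sum_j\alpha_j^{2/\crit}\leq 1\leq\sum_j\tilde\alpha_j^{2/\crit}$ — a step you do need, since the statement concerns $|u_i|^{\crit}dv_g$ and not $(u_i^+)^{\crit}dv_g$. Your route avoids the paper's preliminary strong-convergence claim (both proofs still need the test-function bound $\alpha\leq 1/K_0(n,k)$, for which your appeal to nonnegative bubbles, cut off so as to be supported in $\Omega_M$, is the standard argument), at the price of bookkeeping with two families of weights; the paper's route yields the stronger information $u_i^-\to u_0^-$ in $L^{\crit}$ up front, which is what lets it literally quote the end of Lemma~\ref{lem:min}.

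One step of your write-up is too loose as stated: the identification $\tilde\nu=(u_0^+)^{\crit}dv_g+\sum_j\tilde\alpha_j\delta_{x_j}$. You argue via strong $L^{\crit}_{\mathrm{loc}}$ convergence ``away from $\{x_j\}$'', but the index set may be countably infinite, so $\{x_j\}$ need not be closed and there may be no open set avoiding the atoms. The clean repair stays inside tools the paper already uses: for any continuous $\varphi\geq 0$, Brezis--Lieb with weight gives $\int_M\varphi\,(u_i^+)^{\crit}dv_g=\int_M\varphi\,(u_0^+)^{\crit}dv_g+\int_M\varphi\,|u_i^+-u_0^+|^{\crit}dv_g+o(1)$, while $|u_i^+-u_0^+|\leq|u_i-u_0|$ pointwise and $|u_i-u_0|^{\crit}dv_g$ converges to the purely atomic measure $\sum_j\alpha_j\delta_{x_j}$ (Step 2 of the proof of Theorem~\ref{th:CC}); hence $0\leq\tilde\nu-(u_0^+)^{\crit}dv_g\leq\sum_j\alpha_j\delta_{x_j}$, which is exactly your decomposition with $0\leq\tilde\alpha_j\leq\alpha_j$. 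With that fix — and spelling out that in the case $t=1$ it is the intermediate inequality $\alpha\geq I_P(u_0)+\sum_j\beta_j$ combined with $I_P(u_0)\geq\alpha$ that kills all $\beta_j$ and $\alpha_j$, not merely the $\tilde\alpha_j$ — your proof is complete.
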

\smallskip\noindent{\it Proof of Lemma \ref{lem:pos}:} As the functional $I_{g}$ is coercive so the sequence $(u_{i})$ is bounded in  $H^{2}_{k,0}(\Omega_{M})$. We let $u_0\in H^{2}_{k,0}(\Omega_{M})$ such that, up to a subsequence, $u_{i} \rightharpoonup u_{0}$ weakly in $H^{2}_{k,0}(\Omega_{M})$ as $i\to +\infty$, and $u_i(x)\to u_0(x)$ as $i\to +\infty$ for a.e. $x\in \Omega_M$. As the sequences $(u^{+}_{i}),(u^{-}_{i}) $ is bounded in $L^{2^{\sharp}_{k} } (\Omega_{M}) $ and $u^{+}_{i}(x) \rightarrow u^{+}_{0}(x)$, $u^{-}_{i}(x) \rightarrow u^{-}_{0}(x)$ for a.e. $x \in \Omega_{M}$, integration theory yields 
\begin{equation}
u_{i}^{+} \rightharpoonup u_{0}^{+} \hbox{ and }u_{i}^{-} \rightharpoonup u_{0}^{-}\qquad \text{weakly in } L^{2^{\sharp}_{k} } (\Omega_{M})\hbox{ as }i\to +\infty. 
\end{equation}
Therefore,
\begin{align}\label{bnd:u0}
 \left\|u^{+}_0\right\|_{L^{2_{k}^{\sharp}}}^{2^{\sharp}_{k}} \leq  \liminf_{i\to +\infty}\left\|u^{+}_i\right\|_{L^{2_{k}^{\sharp}}}^{2^{\sharp}_{k}}=1 \qquad \text{and }   \qquad \left\|u^{-}_0\right\|_{L^{2_{k}^{\sharp}}}^{2^{\sharp}_{k}} \leq   \liminf \limits_{\i \rightarrow + \infty}  \left\|u^{-}_{i}\right\|_{L^{2_{k}^{\sharp}}}^{2^{\sharp}_{k}}. 
\end{align}
\medskip\noindent We claim that 
\begin{equation}\label{lim:u:minus}
u_{i}^{-}\to  u_{0}^{-} \qquad \text{strongly in } L^{2^{\sharp}_{k}}(\Omega_{M}) .
\end{equation}
We prove the claim. We define $v_{i}:=u_{i}-u_{0}$. Up to extracting a subsequence, we have that $(v_i)_i\to 0$ in $H_{k-1}^2(M)$. Therefore, as $i \rightarrow + \infty$,
\begin{align}\label{ineq:vi:2}
I_P(u_i) =   \int \limits_{\Omega_{M}}  (\Delta_g^{k/2} v_{i}  )^{2} ~{dv_g}+  I_P(u_0)+ o(1).
\end{align}
And then, letting $\alpha:=\inf \limits_{u\in \mathcal{N}_{+} } I_P(u)$, we have that
$$ \alpha =I_P(u_i) +o(1)\geq  \int \limits_{\Omega_{M}}  (\Delta_g^{k/2} v_{i} )^{2} ~{dv_g} +   \alpha   \left\|u^{+}_0\right\|_{L^{2_{k}^{\sharp}}}^{2} +o(1)$$
and then
\begin{equation}\label{ineq:vi}
\alpha \left(  1-   \left\|u^{+}_0\right\|_{L^{2_{k}^{\sharp}}}^{2} \right) \geq  \int \limits_{\Omega_{M}}  (\Delta_g^{k/2} v_{i})^{2} ~{dv_g} + o(1)
\end{equation}
as $i\to +\infty$. \medskip
We fix $\epsilon>0$. It then follows from \eqref{eq:eps} and $(v_i)_i\to 0$ in $H_{k-1}^2(M)$ that 
\begin{align}
 \alpha \left( K_{0}(n,k) + \epsilon \right) \left(  1-   \left\|u^{+}_0\right\|_{L^{2_{k}^{\sharp}}}^{2} \right) \geq    \left\|v_{i}\right\|_{L^{2_{k}^{\sharp}}}^{2}  + o(1).
\end{align}
Since $1-a^{2^{\sharp}_{k}/2} \geq (1-a^{2})^{ 2^{\sharp}_{k}/2}$  for  $1 \geq a \geq 0$, we get that
\begin{align}
  \left(  \alpha  \left( K_{0}(n,k) + \epsilon \right)  \right)^{2^{\sharp}_{k}/2} \left( 1-   \left\|u^{+}_0\right\|_{L^{2_{k}^{\sharp}}}^{2^{\sharp}_{k}}  \right)   \geq    \left\|v_{i}\right\|_{L^{2_{k}^{\sharp}}}^{2^{\sharp}_{k}}   + o(1).
\end{align}
Integration theory yields $\left\|u_i \right\|_{L^{2_{k}^{\sharp}}}^{2_{k}^{\sharp}}= \left\|v_i \right\|_{L^{2_{k}^{\sharp}}}^{2_{k}^{\sharp}} + \left\|u_0 \right\|_{L^{2_{k}^{\sharp}}}^{2_{k}^{\sharp}} + o(1)$ as $i\to +\infty$. Therefore
\begin{eqnarray*}
&&  \left( \alpha \left( K_{0}(n,k) + \epsilon \right)  \right)^{2^{\sharp}_{k}/2} \left( 1-   \left\|u^{+}_0\right\|_{L^{2_{k}^{\sharp}}}^{2^{\sharp}_{k}}  \right)     + o(1)   \geq    \left\|u_i \right\|_{L^{2_{k}^{\sharp}}}^{2_{k}^{\sharp}} - \left\|u_0 \right\|_{L^{2_{k}^{\sharp}}}^{2_{k}^{\sharp}} \notag\\
  && =  \left\|u_i^{+} \right\|_{\crit}^{2_{k}^{\sharp}} - \left\|u_0^{+} \right\|_{\crit}^{2_{k}^{\sharp}} +  \left\|u_i^{-} \right\|_{\crit}^{2_{k}^{\sharp}} - \left\|u_0^{-} \right\|_{\crit}^{2_{k}^{\sharp}} = 1- \left\|u_0^{+} \right\|_{\crit}^{2_{k}^{\sharp}} +  \left\|u_i^{-} \right\|_{\crit}^{2_{k}^{\sharp}} - \left\|u_0^{-} \right\|_{\crit}^{2_{k}^{\sharp}} 
  \end{eqnarray*}
Then $\left\|u_i^- \right\|_{L^{2_{k}^{\sharp}}}^{2_{k}^{\sharp}}= \left\|u_i^--u_0^- \right\|_{L^{2_{k}^{\sharp}}}^{2_{k}^{\sharp}} + \left\|u_0^- \right\|_{L^{2_{k}^{\sharp}}}^{2_{k}^{\sharp}} + o(1)$ as $i\to +\infty$ yields
  \begin{align}
 \left(  \left( \mu \left( K_{0}(n,k) + \epsilon \right)  \right)^{2^{\sharp}_{k}/2} -1\right) \left( 1-   \left\|u^{+}_0\right\|_{L^{2_{k}^{\sharp}}}^{2^{\sharp}_{k}}  \right)    + o(1) & \geq   \left\|u_i^{-} \right\|_{L^{2_{k}^{\sharp}}}^{2_{k}^{\sharp}} - \left\|u_0^{-} \right\|_{L^{2_{k}^{\sharp}}}^{2_{k}^{\sharp}} \\
 & =  \left\|u_{i}^{-}- u_0^{-} \right\|_{L^{2_{k}^{\sharp}}}^{2_{k}^{\sharp}}  + o(1).
\end{align}   
Since $\alpha K_0(n,k)\leq 1$ and $\epsilon>0$ is arbitrary small, we get \eqref{lim:u:minus}. This proves the claim.

\medskip\noindent We define $\mu_{i} := (\Delta_{g}^{k/2} u_{i})^{2}~dv_{g}$ and $\nu_{i} = | u_{i}|^{2_{k}^{\sharp}}~dv_{g}$ for all $i$. Up to a subsequence, we denote respectively by $\mu$ and $\nu$ their limits in the sense of measures. It follows from the concentration-compactness Theorem \ref{th:CC} that, 
\begin{equation}
\nu = |u_{0}|^{2_{k}^{\sharp}}\, dv_g + \sum \limits_{j \in \mathcal{J}} \alpha_{j} \delta_{x_{j}} \hbox{ and }
\mu \geq  (\Delta_{g}^{k/2} u_{0})^{2}\, dv_g + \sum \limits_{j \in \mathcal{I}} \beta_{j} \delta_{x_{i}}
\end{equation}
where $J\subset \mathbb{N}$ is at most countable,  $(x_{j})_{j\in J} \in M$ is a family of points, and $(\alpha_{j})_{j\in J}\in \mathbb{R}_{\geq 0}$, $(\beta_j)_{j\in J}\in \mathbb{R}_{\geq 0}$ are such that $\alpha_{j}^{2/{2_{k}^{\sharp}}} \leq K_{0}(n,k) ~ \beta_{j}$ for all $j\in J$. Since $u_i^-\to u_0^-$ strongly in $L^{\crit}(M)$, we then get that
\begin{equation}
| u_{i}^+|^{2_{k}^{\sharp}}~dv_{g}\rightharpoonup |u_{0}^+|^{2_{k}^{\sharp}}\, dv_g + \sum \limits_{j \in \mathcal{J}} \alpha_{j} \delta_{x_{j}} 
\end{equation}
as $i\to +\infty$ in the sense of measures. The sequel is similar to the proof of Lemma \ref{lem:min}. We omit the details. This completes the proof of Lemma \ref{lem:pos}.\hfill$\Box$

\begin{lemma}\label{lem:ps:pos} We assume that there is no nontrivial solution to \eqref{eq:u:plus}. Then the functional $I_P$ satisfies the $(P.S)_{c}$ condition on $\mathcal{N}_{+}$ for $c \in (S_{k},  2^{\frac{2 k}{n} }S_{k})$  if the  equation.
\end{lemma}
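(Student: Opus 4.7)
The plan is to mimic closely the argument of Proposition \ref{prop:ps:cond}, replacing the nonlinearity $|u|^{\crit-2}u$ by $u_+^{\crit-1}$ throughout. Let $(v_i)\in\mathcal{N}_+$ be a Palais--Smale sequence for $I_P|_{\mathcal{N}_+}$ at level $c\in(S_k,2^{2k/n}S_k)$. The Lagrange multiplier identity on the constraint $\mathcal{N}_+$ shows that a critical point $v$ satisfies $Pv=I_P(v)(v^+)^{\crit-1}$, so setting $u_i:=(I_P(v_i))^{1/(\crit-2)}v_i$ turns $(u_i)$ into a bounded Palais--Smale sequence in $H_{k,0}^2(\Omega_{\epsilon_0})$ for the free functional
\[
F_P^+(u):=\frac{1}{2}\int_{\Omega_{\epsilon_0}}u\,P(u)\, dv_g-\frac{1}{\crit}\int_{\Omega_{\epsilon_0}}u_+^{\crit}\, dv_g.
\]
Using $\tfrac12-\tfrac1{\crit}=\tfrac{k}{n}$ and $\tfrac{\crit}{\crit-2}=\tfrac{n}{2k}$, a direct calculation gives $F_P^+(u_i)\to \tfrac{k}{n}c^{n/(2k)}$, which lies in the open interval $\bigl(\tfrac{k}{n}S_k^{n/(2k)},\tfrac{2k}{n}S_k^{n/(2k)}\bigr)$ precisely when $c\in(S_k,2^{2k/n}S_k)$.

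Next I would invoke the Struwe-type profile decomposition for polyharmonic operators (see \cite{mazumdar2}), adapted to the positive-part nonlinearity. Any weak limit $u_0$ of $(u_i)$ is a weak solution of \eqref{eq:u:plus} and therefore vanishes identically by the hypothesis of the lemma. The energy thus splits as
\[
F_P^+(u_i)=\sum_{j=1}^d E^+(U^j)+o(1),\qquad i\to\infty,
\]
where the profiles $U^j\in\mathscr{D}^{k,2}(\R^n)$ are nontrivial solutions of $\Delta^k U=U_+^{\crit-1}$ either on all of $\R^n$ or, when a concentration point sits on $\partial\Omega_{\epsilon_0}$, on the half-space $\{x_1<0\}$ with Dirichlet boundary data, and $E^+(U)=\tfrac12\int(\Delta^{k/2}U)^2-\tfrac1{\crit}\int U_+^{\crit}$.

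The decisive step is to show that every such nontrivial profile is nonnegative. For $k=1$ this is the classical test against $U_-$; for $k\geq 2$, $U_-$ need not belong to $\mathscr{D}^{k,2}(\R^n)$ and one has to proceed differently. The key observation is that $U\in\mathscr{D}^{k,2}(\R^n)$ has pointwise decay at infinity and $U_+^{\crit-1}\in L^{\crit/(\crit-1)}(\R^n)$, so the positivity of the fundamental solution of $\Delta^k$ on $\R^n$ for $2k<n$ yields the Riesz representation
\[
U(x)=c_{n,k}\int_{\R^n}\frac{U_+(y)^{\crit-1}}{|x-y|^{n-2k}}\, dy\geq 0.
\]
Once $U\geq 0$, it solves $\Delta^k U=U^{\crit-1}$, and by Lions \cite{PLL} and Swanson \cite{swanson} it must be a translation-dilation of the standard Euclidean bubble, yielding $E^+(U)=\tfrac{k}{n}S_k^{n/(2k)}$ after an explicit scaling computation. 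A reflection argument through $\{x_1=0\}$ rules out the half-space profiles. Consequently $F_P^+(u_i)\to d\cdot\tfrac{k}{n}S_k^{n/(2k)}$, but no integer $d$ can place this limit in the open interval $\bigl(\tfrac{k}{n}S_k^{n/(2k)},\tfrac{2k}{n}S_k^{n/(2k)}\bigr)$ (one would need $d\in(1,2)$), a contradiction. Thus no Palais--Smale sequence exists at such a level $c$, and $(P.S)_c$ holds vacuously.

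The main obstacle will be establishing nonnegativity of the bubble profiles in the polyharmonic setting $k\geq 2$, where the ordinary energy test against $U_-$ is unavailable; one must rely on sharp decay estimates for elements of $\mathscr{D}^{k,2}(\R^n)$ together with the positivity of the Riesz kernel of $\Delta^k$, and adapt these ingredients to the half-space to exclude boundary concentration.
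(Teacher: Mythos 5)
Your route is genuinely different from the paper's, and as written it has two gaps. The paper never decomposes Palais--Smale sequences of the positive-part functional at all: given such a sequence $(u_i)$ for $F_P^+$, it solves $P\varphi_i=(u_i^+)^{\crit-1}$ with Dirichlet data (Proposition \ref{existence and uniqueness}), uses the positivity of the Green's function $G_P$ of $P$ on $\Omega_{\epsilon_0}$ --- the standing hypothesis of Section \ref{sec:pos} --- to get $\varphi_i\geq 0$, checks that $\varphi_i=u_i+o(1)$ in $H^2_{k,0}(\Omega_{\epsilon_0})$, and then observes that, being nonnegative, $(\varphi_i)$ is simultaneously a Palais--Smale sequence for the functional $F_P$ of \eqref{def:F:P}, so that Proposition \ref{prop:ps:cond} (the Struwe decomposition of \cite{mazumdar2} for the nonlinearity $|u|^{\crit-2}u$, combined with Lemmas 3 and 5 of \cite{gwz}) applies verbatim. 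Your first gap is precisely the step you describe as ``adapted to the positive-part nonlinearity'': \cite{mazumdar2} treats $|u|^{\crit-2}u$, and redoing the profile decomposition for $u_+^{\crit-1}$ is a nontrivial piece of work which the Green-function trick is designed to avoid. (If you did carry it out, your argument would have the genuine merit of not using $G_P>0$ at all, unlike the paper's proof.)

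The second, more serious gap is the half-space step. For $k\geq 2$, the odd reflection of a solution of $\Delta^k U=U_+^{\crit-1}$ on $\{x_1<0\}$ with $D^\alpha U=0$ for $|\alpha|\leq k-1$ is not a solution across $\{x_1=0\}$: the Dirichlet conditions control only $k$ normal derivatives, the reflected function is in general not in $H^{2k}_{loc}$, and $\Delta^k$ of it produces distributions supported on the hyperplane (already for $k=2$, since $\partial_{11}U$ need not vanish on the boundary). The correct tool is the Pohozaev identity, i.e. Lemma 3 of \cite{gwz}, which is how the paper excludes these profiles. Your positivity step on $\R^n$ is essentially sound but under-justified: membership in $\mathscr{D}^{k,2}(\R^n)$ gives no pointwise decay. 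The clean argument is: $U_+^{\crit-1}\in L^{\frac{2n}{n+2k}}(\R^n)$, so $V:=c_{n,k}\,|x|^{2k-n}\ast U_+^{\crit-1}$ lies in $L^{\crit}(\R^n)$ by Hardy--Littlewood--Sobolev; then $U-V$ is a tempered distribution with $\Delta^k(U-V)=0$, hence a polynomial, hence zero since it belongs to $L^{\crit}(\R^n)$; thus $U=V\geq 0$ by positivity of the Riesz kernel, and the classification of Lions and Swanson applies. With the Pohozaev identity replacing the reflection, this Liouville argument written out, and the adapted decomposition actually proved, your scheme would yield the lemma; as submitted, it does not yet constitute a proof.
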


\smallskip\noindent{\it Proof of Lemma \ref{lem:ps:pos}:} This is equivalent to prove that the functional
\begin{align}
F_{P}^{+}(u) =  \frac{1}{2}\int \limits_{{{\Omega}}_{\epsilon_{0}}} u  P(u)~ dv_{g}-\frac{1}{{  2_k^{\sharp}}}~ \int \limits_{{{\Omega}}_{\epsilon_{0}}} (u^{+})^{  2_k^{\sharp}}~ dv_{g} .
\end{align}
satisfies the $(P.S)_{c}$ condition on $H_{k,0}^{2}({{\Omega}}_{\epsilon_{0}} )$ for $c \in (\frac{k}{n} S_{k}^{n/2k},  \frac{2 k}{n} S_{k}^{n/2k})$.  Let $(u_{i})$  be  a Palais-Smale sequence for the functional $F_P^{+}$ on the space $ H_{k,0}^2({{\Omega}}_{\epsilon_{0}} )$.  Then, as $v \in   H_{k,0}^2({{\Omega}}_{\epsilon_{0}} )$ goes to $0$,
\begin{equation}\label{ps:plus}
 \int \limits_{{{\Omega}}_{\epsilon_{0}}} u_{i}  P_{g}^{k}(v)~ dv_{g}- \int \limits_{{{\Omega}}_{\epsilon_{0}}} (u_{i}^{+})^{  2_k^{\sharp}-1} v ~ dv_{g}  = o \left( \| v\|_{H^{2}_{k} } \right).
\end{equation}
\noindent  Without loss of generality we can assume that  $u_{i} \in C^{\infty}_c(\Omega_{\epsilon_{0}})$ for all $i$. Let $\varphi_{i} \in C^{\infty}( \overline{\Omega}_{\epsilon_{0}})$ be the unique solution of the equation
 \begin{eqnarray}
 \left \{ \begin{array} {ll}
          P_{g}^{k}\varphi_{i}  = (u_{i}^{+})^{2^{\sharp}_{k}-1} &  \text{in} ~ \Omega_{{\epsilon_{0}}}\\
          D^{\alpha} \varphi_{i}=0    &\text{on } ~\partial \Omega_{{\epsilon_{0}}} \quad \text{for} \ \ |\alpha| \leq k-1.            \end{array} \right. 
\end{eqnarray}  
The existence  of such  $\varphi_{i}$ is guaranteed by Theorem \ref{existence and uniqueness}. It then follows from Green's representation formula that
\begin{align}
\varphi_{i}(x)= \int \limits_{\Omega_{M}} G_{P}(x,y) (u_{i}^{+}(y))^{2^{\sharp}_{k}-1}~ dv_{g}(y) \geq 0.
\end{align} 
for all $x\in \Omega_M$. Note that the sequence $(\varphi_{i})$ is bounded in $H_{k,0}^{2}({{\Omega}}_{\epsilon_{0}} )$. It follows from \eqref{ps:plus} that $\varphi_{i}= u_{i} + o(1)$, where $o(1) \to 0$ in $ H_{k,0}^2({{\Omega}}_{\epsilon_{0}} )$ as $ i \to + \infty$. And so $(\varphi_{i})$ is  Palais-Smale sequences for the functional $F_P^+$ on the space $ H_{k,0}^2({{\Omega}}_{\epsilon_{0}} )$. Therefore, since $\varphi_i\geq 0$, it is also a Palais-Smale sequence for $F_P$ defined in \eqref{def:F:P}. Since there is no nontrivial critical point for $F_P^+$, using the Struwe decomposition \cite{mazumdar2} as in the proof of Proposition \ref{prop:ps:cond}, we then get that $(\varphi)_i$ is relatively compact in  $ H_{k,0}^2({{\Omega}}_{\epsilon_{0}} )$, and so is $(u_i)$. This ends the proof of Lemma \ref{lem:pos}. \hfill$\Box$

\medskip\noindent {\it Proof of Theorem \ref{main}, positive solutions:} this goes essentially as in the proof of Theorem \ref{main}, the key remark being that the functions $v_{t,R}^{\sigma}$ defined in \eqref{def:vtr} are nonnegative. We define $\mathcal{N}^{\epsilon_{0}}_{+}= \{ u \in H_{k,0}^{2} (\Omega_{\epsilon_{0}}): \left\| u^{+}\right\|_{L^{2_k^{\sharp}}} =1 \}$, 
where $\Omega_{\epsilon_{0}}=M\setminus \overline{B}_{\epsilon_0}(x_0)$ and $\epsilon_0>0$ was defined in \eqref{def:omega}. For  $c \in \R$ we define the sublevel sets of the functional $I_P$ on $ \mathcal{N}^{\epsilon_{0}}_{+}$ as ${\mathcal I}_{c}^+:= \{ u \in \mathcal{N}^{\epsilon_{0}}_{+} : {I}_{g}^{k}(u)  < c  \}$. Arguing as in the proof of Proposition \ref{prop:center of mass}, it follows from Lemma \ref{lem:pos} that there exists a $\delta_{0} >0$ such that there exists $\Gamma: {\mathcal I}^+_{S_{k} + \delta_{0}} \to  {\overline{\Omega}}_{\epsilon_{0}} $ a continous map such that: If $(u_{i}) \in {\mathcal I}^+_{S_{k} + \delta_{0}} $ is a sequence such that $| u_{i}^+ |^{  2_k^{\sharp}} ~dv_{g} \rightharpoonup  \delta_{p_{0}}$ weakly in the sense of measures, for some point $p_{0} \in {\overline{\Omega}}_{\epsilon_{0}} $, then $\lim \limits_{i \rightarrow + \infty}\Gamma(u_{i})= p_{0}$.
 
\medskip\noindent
Let $\displaystyle{r_{\mathcal{N}^{\epsilon_{0}}_{+}} : H_{k,0}^{2}({{\Omega}}_{\epsilon_{0}}) \backslash\{ \left\| u^{+}\right\|_{L^{2_k^{\sharp}}}=0\}\rightarrow \mathcal{N}^{\epsilon_{0}}_{+}}$ be the map given by $u \mapsto \frac{u}{\left\| u^{+}\right\|_{L^{2_k^{\sharp}}} }$. Consider the map $h:S^{n-1} \times [0,\iota_{g}/2] \rightarrow {\overline{\Omega}}_{\epsilon_{0}} $ given by 
\begin{eqnarray}
h(\sigma,t) = \left \{ \begin{array} {lc}
             \Gamma  \circ  \beta   (r_{\mathcal{N}^{\epsilon_{0}}_{+}}(v_{t,R_{0}}^{\sigma}) ) \quad \text{ for   } \quad \qquad   t < \iota_{g}/2   \\
              \sigma_{ \iota_{g} / 2  }^{M} \qquad  \qquad  \  \quad   \quad   \ \text{ for }  \quad  \qquad  t= \iota_{g}/2
          \end{array} \right.
\end{eqnarray}
where $\beta: {\mathcal I}^+_{S_k+4\theta}\to {\mathcal I}^+_{S_k+\delta_0}$ is a retract (we have used Lemma \ref{lem:ps:pos}) and  $\sigma_{t}^{M} =  exp_{x_{0}}({t}\sigma)$. Note here that we use that $v_{t,R_{0}}^{\sigma}\geq 0$. As in the proof of Theorem \ref{main}, $h$ is an homotopy of the embedded $(n-1)-$dimensional sphere $\{    exp_{x_{0}}( \frac{\iota_{g} } {2} \sigma   ) : \sigma \in S^{n-1}\}$ to a point in 
${{\Omega}}_{\epsilon_{0}} $,  which is a contradiction to our topological  assumption. So there  exists a nontrivial critical point $u$  for  the  functional $ I_P$ on   $\mathcal{N}^{\epsilon_{0}}_{+}$, which yields a weak solution to \eqref{eq:u:plus}. It then follows from the regularity theorem \ref{main regularity} that $u\in C^\infty(\overline{\Omega}_{\epsilon_0})$, $u>0$, is a solution to \eqref{eq:P:f}. This ends the proof of Theorem \ref{main} for positive solutions.\hfill$\Box$
  
\section{An Important  Remark}\label{sec:counterex}
 
 \noindent
We remark that the topological condition of Theorem \ref{main} is in general  a necessary condition. Consider the $n$-dimensional unit sphere  $\mathbb{S}^{n}$  endowed with its standard round metric $h_{r}$ and let  $P_{h_{r}}$ be the conformally invariant GJMS operator on $\mathbb{S}^{n}$.  By the stereographic projection it follows that   $\mathbb{S}^{n} \backslash \{ x_{0}\}$ is conformal to $\R^{n}$. Also one has that  $\mathbb{S}^{n}\backslash \{ x_{0}\}$  is contractible to a point.  Let $\Omega_{\epsilon_{0}}$ be the domain in $\mathbb{S}^{n} \backslash \{ x_{0}\}$ constructed as earlier in \eqref{main}, and let   $u\in H^{2}_{k,0} ( \Omega_{{\epsilon_{0}}})$, $u \neq 0$ solve the equation 
  \begin{eqnarray}
 \left \{ \begin{array} {lc}
          P_{h_{r}} u  = (u^{+})^{2_{k}^{\sharp}-1} \qquad  \text{in} ~ \Omega_{{\epsilon_{0}}}\\
          D^{\alpha} u=0  \ \  \  \qquad \qquad\text{on } ~\partial \Omega_{{\epsilon_{0}}} \quad \text{for} \ \ |\alpha| \leq k-1.            \end{array} \right. 
\end{eqnarray} 
 \medskip
 
 \noindent
 Then by  the stereographic projection it follows   that there exists a ball of radius $R$,  $B_{0}(R)$ such that there is a nontrivial solution  $v \in H^{2}_{k,0}(B_{0}(R))$ to the equation 
  \begin{eqnarray}{\label{lap k in ball}}
 \left \{ \begin{array} {lc}
          \Delta^{k}v  = (v^{+})^{2_{k}^{\sharp}-1} \qquad  \text{in} ~  B_{0}(R)\\
          D^{\alpha} v=0  \qquad \qquad \quad \text{on } ~\partial B_{0}(R) \quad \text{for} \ \ |\alpha| \leq k-1. \\
           \end{array} \right. 
\end{eqnarray} 
By a result of  Boggio\cite{boggio}, the Green's function for the Dirichlet problem above is positive. Therefore, we get that $v>0$ is a smooth classical solution to 
  \begin{eqnarray}
 \left \{ \begin{array} {lc}
          \Delta^{k}v  = v^{2_{k}^{\sharp}-1} \qquad  \text{in} ~  B_{0}(R)\\
          D^{\alpha} v=0  \qquad  \qquad \text{on } ~\partial B_{0}(R) \quad \text{for} \ \ |\alpha| \leq k-1. \\
           \end{array} \right. 
\end{eqnarray} 
This is impossible by Pohozaev identity, see Lemma 3 of Ge-Wei-Zhou \cite{gwz}. 

 \section{Appendix: Regularity}
 
 \noindent
 Let  $f \in L^{1}(\Omega_{M})$. We say that  $u \in H^{2}_{k,0}(\Omega_{M})$ is  a weak solution of the  equation $\displaystyle{  Pu  = f }$  in $\Omega_{M}$ and $D^{\alpha} u=0$ on $\partial \Omega_{M}$ for $|\alpha| \leq k-1$, if  for all $\varphi \in C_c^\infty(\Omega_{M})$ 
\begin{equation}
  \int \limits_{\Omega_{M}} \Delta_g^{k/2} u \, \Delta_g^{k/2} \varphi  ~{dv_g} + 
  \sum \limits_{\alpha =0}^{k-1}~ \int \limits_{\Omega_{M}} 
   A_{l}(g) (\nabla^{l} u,\nabla^{l} \varphi) ~dv_{g}
     = \int \limits_{\Omega_{M}}  f \varphi ~{dv_g}. \label{weak:sol}
\end{equation}

\medskip\noindent Now let the operator  $P$ be  coercive on the space $H_{k,0}^{2}(\Omega_M)$, i.e there exists  a constant  $C>0$ such that for all $u \in H^{2}_{k,0}(\Omega_M)$
\begin{equation}
\int \limits_{\Omega_{M}} u P(u)~ dv_{g} \geq \  C \left\|u \right\|_{H_{k,0}^2(\Omega_{M})}^2.
\end{equation}
\medskip
We then have
\begin{proposition}[$(H^{p}_{k}$-coercivity]\label{coercivity lemma}
\begin{align}
\inf \limits_{u \in  H_k^p(\Omega_{M}) \backslash \{0\}  } \frac{\left\| Pu\right\|_{p}}{\left\|u \right\|_{H^{p}_{k}} } >0.
\end{align}
\end{proposition}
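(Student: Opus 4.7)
The strategy is to combine the Calder\'on-Zygmund/Agmon-Douglis-Nirenberg $L^p$ regularity theory for the $2k$-th order elliptic operator $P$ with the given $L^2$-coercivity, bridging the two via a compactness-and-contradiction argument. Implicitly, $H^p_k(\Omega_M)$ should be read as $H^p_{k,0}(\Omega_M)$, so that the Dirichlet conditions match those of the coercive setting, and one is really seeking the bound on the domain of $P$.

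First I would invoke standard $L^p$ elliptic regularity for $P$ with Dirichlet boundary data on the smooth bounded domain $\Omega_M$: for every $p \in (1,\infty)$ there is a constant $C = C(p, P, \Omega_M) > 0$ such that
$$\|u\|_{H^p_{2k}(\Omega_M)} \leq C\bigl(\|Pu\|_{L^p(\Omega_M)} + \|u\|_{L^p(\Omega_M)}\bigr)$$
whenever $u \in H^p_{k,0}(\Omega_M)$ with $Pu \in L^p(\Omega_M)$. Since $\|u\|_{H^p_k} \leq \|u\|_{H^p_{2k}}$, the proposition follows once I establish the $L^p$-to-$L^p$ inequality
$$\|u\|_{L^p(\Omega_M)} \leq C\, \|Pu\|_{L^p(\Omega_M)},$$
for then the lower-order term in the elliptic estimate is absorbed.

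This $L^p$ bound I would prove by contradiction. Suppose $(u_n) \subset H^p_{k,0}(\Omega_M)$ with $\|u_n\|_{L^p} = 1$ and $\|Pu_n\|_{L^p} \to 0$. The elliptic estimate gives $\sup_n \|u_n\|_{H^p_{2k}} < \infty$, and since $H^p_{2k}(\Omega_M) \hookrightarrow L^p(\Omega_M)$ compactly (Rellich-Kondrachov on the bounded smooth domain), a subsequence satisfies $u_n \to u_\infty$ strongly in $L^p$ and $u_n \rightharpoonup u_\infty$ weakly in $H^p_{2k}$. Hence $\|u_\infty\|_{L^p} = 1$, the Dirichlet conditions survive in the weak limit by continuity of the trace, and $Pu_\infty = 0$ distributionally. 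A bootstrap argument --- alternating Sobolev embedding $H^q_{2k} \hookrightarrow L^{q^*}$ with $L^q$-elliptic regularity applied to the homogeneous equation $Pu_\infty = 0$ --- raises the integrability of $u_\infty$ through finitely many steps from $L^p$ to $L^2$, and in particular places $u_\infty \in H^2_{k,0}(\Omega_M)$. The $L^2$-coercivity then yields
$$0 = \int_{\Omega_M} u_\infty \, P(u_\infty)\, dv_g \geq C_0 \|u_\infty\|_{H^2_k}^2,$$
so $u_\infty \equiv 0$, contradicting $\|u_\infty\|_{L^p} = 1$.

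The main technical obstacle will be the integrability bootstrap when $p < 2$: for such $p$ one does not have $L^p \hookrightarrow L^2$ on $\Omega_M$, and one must iterate Sobolev embedding together with the $L^q$-regularity for $Pu_\infty = 0$ to climb high enough to invoke the $L^2$-coercivity. Each step improves the exponent by a fixed multiplicative factor (as long as $2kq < n$), so the iteration terminates; once $q \geq 2$ one concludes by $L^p \hookrightarrow L^2$ on $\Omega_M$. Writing out the bootstrap carefully, together with the preservation of Dirichlet traces at each stage, is the technical heart of the argument; all other steps are immediate on a compact manifold with smooth boundary and smooth coefficients.
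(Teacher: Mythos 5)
Your proposal is correct and takes essentially the same route as the paper: a contradiction argument that combines the Agmon--Douglis--Nirenberg a priori estimate, the compactness of $H^{p}_{2k}(\Omega_M)\hookrightarrow L^{p}(\Omega_M)$ (resp.\ $H^{p}_{k}$), identification of the limit as a solution of $Pu=0$ with Dirichlet conditions, and the $L^2$-coercivity to force the limit to vanish. The only cosmetic differences are that the paper normalizes the sequence in $H^{p}_{k}$ instead of first reducing to the bound $\left\|u\right\|_{L^p}\leq C\left\|Pu\right\|_{L^p}$, and it invokes full elliptic regularity to get $u_0\in C^{\infty}(\overline{\Omega}_{M})$ rather than carrying out your explicit $L^q$ bootstrap down to the $H^{2}_{k,0}$ setting.
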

\smallskip\noindent{\it Proof of Proposition \ref{coercivity lemma}:} We proceed by contradiction. If not, then there exists a sequence $(u_{i}) \in C_c^\infty(\Omega_{M})$ such that $\left\|u_{i} \right\|_{H^{p}_{k}} =1 $ and $\displaystyle{ \lim \limits_{i \rightarrow + \infty} \left\| Pu_{i}\right\|_{p}} =0$. It follows from classical estimates (see Agmon-Douglis-Nirenberg \cite{ADN}) that
\begin{align}
\left\| u_{i} \right\|_{H^{p}_{2k}(\Omega_{M})} &\leq C_{p} \left( \left\|  Pu_{i} \right\|_{L^{p}} + \left\| u_{i} \right\|_{H^{p}_{k}} \right) =O(1).
\end{align}
So  there exists $u_{0} \in H^{p}_{ 2k,0}(\Omega_{M}) $ such that upto a subsequence  $u_{i} \rightharpoonup u_{0} $ weakly in $H^{p}_{ 2k,0}(\Omega_{M}) $. Then     $u_{i} \rightarrow u_{0} $ strongly  in $H^{p}_{k, 0}(\Omega_{M}) $ and so $\left\|u_{0} \right\|_{H^{p}_{k}}=1$. Also $u_{0}$ weakly solves the equation $Pu_0  = 0$ in $\Omega_M$ and $D^{\alpha} u_0=0$ on $\partial \Omega_{M}$ for $|\alpha| \leq k-1$. It follows from standard elliptic estimates (see Agmon-Douglis-Nirenberg \cite{ADN}) that $u_{0} \in C^{\infty}(\overline{\Omega}_{M})$. Then, multiplying the equation by $u_0$ and integrating over $M$, coercivity yields
\begin{align}
 C \left\|u_{0} \right\|_{H_{k}^2(\Omega_{M})}^2 \leq \int \limits_{{M}} u_{0} Pu_{0}~ dv_{g}=0 .
\end{align}
and hence $u_{0} \equiv 0$, a contradiction since we have also obtained that  $\left\|u_{0} \right\|_{H^{p}_{k}}=1$. This proves Proposition \ref{coercivity lemma}.\hfill$\Box$
 
\begin{proposition}[Existence and Uniqueness] {\label{existence and uniqueness}} 
Let the operator $P$ be coercive. Then given any $f \in L^{p}(\Omega_{M})$, $1 < p < + \infty$,  there exists a  unique weak solution $u \in H^{p}_{k,0}(\Omega_{M}) \cap H^{p}_{2k} (\Omega_{M}) $ to 
\begin{eqnarray}
 \left \{ \begin{array} {lc}
          Pu  = f \qquad  \text{in} ~ \Omega_{M}\\
          D^{\alpha} u=0   \qquad\text{on } ~\partial \Omega_{M} \quad \text{for} \ \ |\alpha| \leq k-1  .          \end{array} \right. 
\end{eqnarray} 
\end{proposition}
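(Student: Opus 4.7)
\smallskip\noindent Uniqueness follows immediately from Proposition \ref{coercivity lemma}: if $u_1, u_2 \in H^p_{k,0}(\Omega_M) \cap H^p_{2k}(\Omega_M)$ are two weak solutions with the same datum $f$, then $u := u_1-u_2$ satisfies $Pu = 0$ in $L^p$ and lies in $H^p_{k,0} \cap H^p_{2k}$, so the inequality $\|u\|_{H^p_k} \leq C\|Pu\|_p$ forces $u \equiv 0$. For existence, the plan is to treat the Hilbert case $p=2$ first via Lax--Milgram, and then to reach general $p \in (1,\infty)$ by a density argument driven by the a priori estimate of Proposition \ref{coercivity lemma}.

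\smallskip\noindent For $p=2$, I would introduce the bilinear form $a(u,v) := \int_{\Omega_M} u P(v)\, dv_g$ on $H^2_{k,0}(\Omega_M)$. By the decomposition \eqref{def:upv} and smoothness of the coefficient tensors $A_l(g)$, the form $a$ is continuous; the standing coercivity hypothesis gives $a(u,u) \geq C\|u\|_{H^2_k}^2$. Since $\Omega_M$ has finite volume, $f \in L^2(\Omega_M)$ induces a bounded linear functional $v\mapsto \int_{\Omega_M} fv\, dv_g$ on $H^2_{k,0}(\Omega_M)$, and Lax--Milgram produces a unique weak solution $u \in H^2_{k,0}(\Omega_M)$. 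The classical Agmon--Douglis--Nirenberg (ADN) interior and boundary regularity theory for the $2k$-th order elliptic operator $P$ with Dirichlet boundary conditions $D^\alpha u = 0$, $|\alpha|\leq k-1$, then upgrades $u$ to $H^2_{2k}(\Omega_M)$.

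\smallskip\noindent For arbitrary $p\in(1,\infty)$, I would approximate by density. Pick $f_i \in C_c^\infty(\Omega_M)$ with $f_i \to f$ in $L^p(\Omega_M)$; since $f_i \in L^2(\Omega_M)$, the previous step yields $u_i \in H^2_{k,0}(\Omega_M)\cap H^2_{2k}(\Omega_M)$ with $Pu_i = f_i$ weakly. The $L^p$ version of the ADN estimate, $\|u_i\|_{H^p_{2k}} \leq C(\|Pu_i\|_p + \|u_i\|_{H^p_k})$, combined with Proposition \ref{coercivity lemma}, gives $\|u_i\|_{H^p_{2k}} \leq C'\|f_i\|_p$, so $u_i \in H^p_{k,0} \cap H^p_{2k}$. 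Applying the same estimate to the linear combination $u_i - u_j$ shows $(u_i)$ is Cauchy in $H^p_{2k}(\Omega_M)$, and its limit $u \in H^p_{k,0}\cap H^p_{2k}$ solves $Pu = f$ in $L^p$.

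\smallskip\noindent The main technical ingredient that needs to be invoked is the $L^p$ ADN regularity estimate for the $2k$-th order operator $P$ on the smooth domain $\Omega_M$ with the complete Dirichlet boundary data, together with the $L^p$-analogue of Proposition \ref{coercivity lemma}. The former is standard (the boundary conditions $D^\alpha u = 0$, $|\alpha|\leq k-1$, satisfy the Lopatinski--Shapiro complementing condition for the principal part $\Delta_g^k$), and once accepted the remainder of the argument is a routine combination of Lax--Milgram with density and a priori estimates.
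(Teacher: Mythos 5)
Your proposal is correct and follows essentially the same route as the paper's own (sketched) proof: the Hilbert case $p=2$ via Riesz representation/Lax--Milgram (equivalent here since the form associated to $P$ is symmetric, continuous and coercive), followed by approximation of $f\in L^p$ by smooth compactly supported functions and passage to the limit using the Agmon--Douglis--Nirenberg estimates together with the $H^p_k$-coercivity of Proposition \ref{coercivity lemma}, which also gives uniqueness. The extra detail you supply (the explicit bound $\left\| u_i\right\|_{H^p_{2k}} \leq C'\left\| f_i\right\|_p$ and the Cauchy argument) is exactly what the paper leaves implicit.
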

The proof is classical and we only sketch it here. For $p=2$, existence and uniqueness follows from the Riesz representation theorem in Hilbert spaces. For arbitrary $p>1$, we approximate $f$ in $L^p$ by smooth compactly supported function on $\Omega_M$. For each of these smooth functions, there exists a solution to the pde with the approximation as a right-hand-side. The coercivity and the Agmon-Douglis-Nirenberg estimates yield convergence of these solutions to a solution of the original equation. Coercivity yields uniqueness.

\medskip

\noindent
We now proceed to prove our regularity results. The proof is based on ideas developed  by Van der Vorst \cite{vdv}, and also employed  by Djadli-Hebey-Ledoux \cite{DHL} for the case $k=2$. 

\begin{theorem}{\label{Lp}}
 Let $(M,g)$ be a  smooth, compact Riemannian manifold  of dimension $n$ and let $k$ be a positve integer such that $2k < {n}$.  Let $\Omega_{M}$ be a smooth domain in $M$ and suppose $u \in H^{2}_{k,0}(\Omega_{M})$  be a weak solution of the equation
 \begin{eqnarray}
 \left \{ \begin{array} {lc}
          Pu  = f(x,u) \qquad  \text{in} ~ \Omega_{M}\\
          D^{\alpha} u=0  \ \  \  \qquad \qquad\text{on } ~\partial \Omega_{M} \quad \text{for} \ \ |\alpha| \leq k-1
            \end{array} \right. 
\end{eqnarray}
where $ \left| f(x,u) \right| \leq C|u| (1+ \left| u \right|^{ \crit - 2} )$ for some positive constant  $C$, then
 \begin{align}
u \in L^{p}(\Omega_{M}) \qquad \text{ for all }~ 1< p < + \infty.
\end{align}
 
 \end{theorem}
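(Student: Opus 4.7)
The approach is a Brezis-Kato-type bootstrap adapted to the polyharmonic setting, combining the inversion statement of Proposition \ref{existence and uniqueness} with the Sobolev embedding. The starting point is the Sobolev inclusion $H^{2}_{k,0}(\Omega_{M}) \hookrightarrow L^{\crit}(\Omega_{M})$. I plan to recast the equation as $Pu = V u$ with $V(x) := f(x,u(x))/u(x)$ (extended by $0$ where $u$ vanishes), so that the growth hypothesis yields $|V| \leq C(1+|u|^{\crit-2})$. Because $(\crit-2)\cdot n/(2k) = \crit$ and $|u|^{\crit}\in L^{1}(\Omega_{M})$, one obtains $V \in L^{n/(2k)}(\Omega_{M})$.

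The key step will be a level-set truncation $V = V_{1}+V_{2}$, where $V_{1}:=V\,\chi_{\{|u|\leq L\}}$ is essentially bounded by $C(1+L^{\crit-2})$ and $V_{2}:=V\,\chi_{\{|u|> L\}}$ has $L^{n/(2k)}$-norm tending to $0$ as $L\to\infty$, by absolute continuity of $\int|u|^{\crit}\,dv_{g}$. The crucial operator is $A : v \mapsto P^{-1}(V_{2}v)$. For any $q\in(1,\infty)$ with $2kq < n$, H\"older's inequality gives $V_{2}v\in L^{r}$ with $1/r=1/q+2k/n$, while Proposition \ref{existence and uniqueness} followed by the Sobolev embedding yields
\begin{equation*}
\|A v\|_{L^{q}} \leq C_{q}\,\|V_{2}\|_{L^{n/(2k)}}\,\|v\|_{L^{q}},
\end{equation*}
and the analogous estimate on $L^{q^{*}}$, with $q^{*}:=nq/(n-2kq)$. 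Choosing $L$ large enough, $A$ becomes a strict contraction on both $L^{q}$ and $L^{q^{*}}$, hence $I-A$ is invertible on each.

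The bootstrap then runs as follows. Rewriting the equation as $(I-A)u = P^{-1}(V_{1}u)$, assume inductively that $u\in L^{q}$ with $2kq<n$. Then $V_{1}u\in L^{q}$ (since $V_{1}\in L^{\infty}$), and Proposition \ref{existence and uniqueness} places $P^{-1}(V_{1}u)\in H^{q}_{2k,0}(\Omega_{M})$, which Sobolev embeds into $L^{q^{*}}$. Inverting $I-A$ on $L^{q^{*}}$ promotes $u$ to $L^{q^{*}}$. Starting from $q_{0}=\crit$ and iterating $q_{j+1}=q_{j}^{*}$, the sequence strictly increases; after a finite number $J$ of steps one reaches $2k q_{J}\geq n$ (or this already holds at $q_{0}$ when $n\leq 6k$), at which point the Sobolev embedding places $H^{q_{J}}_{2k,0}(\Omega_{M})$ into every $L^{p}$, $p<\infty$, closing the argument.

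The main technical subtlety is the uniform choice of $L$: since the bootstrap terminates after a finite number of steps depending only on $n$ and $k$, a single large $L$ must suffice to make $A$ simultaneously contractive at each of the exponents $q_{0},\ldots,q_{J}$. This is feasible because the operator norms of $P^{-1}$ between the Lebesgue pairs arising along this finite chain are uniformly bounded. The Dirichlet boundary condition is preserved throughout since Proposition \ref{existence and uniqueness} delivers the inverse directly into $H^{p}_{k,0}(\Omega_{M})$.
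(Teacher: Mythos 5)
Your proposal is correct and follows essentially the same route as the paper: the same level-set truncation of the potential $V=b$, the same contraction operator $v\mapsto P^{-1}(V_{2}v)$ built from Proposition \ref{existence and uniqueness}, H\"older and the Sobolev embedding, and the same inversion of $I-A$ to recover $u$. The only difference is organizational: since $|u|\leq L$ on the support of $V_{1}$, the term $V_{1}u$ is in fact in $L^{\infty}(\Omega_{M})$, so the paper inverts $I-\mathscr{H}_{\epsilon}$ once at each target exponent and needs no bootstrap chain, whereas your finite iteration $q_{j+1}=q_{j}^{*}$ (with the attendant care about choosing $L$ uniformly) reaches the same conclusion slightly less directly.
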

\noindent{\it Proof of \ref{Lp}:} We write $f(x,u)= bu$ where $|b| \leq  C(1+ \left| u \right|^{ \crit - 2} )$. Then  $b \in  L^{n/2k} (\Omega_{M})$ and $u$ solves weakly the equation 
 \begin{eqnarray}\label{eq:P:b:u}
 \left \{ \begin{array} {lc}
          Pu  = b u \qquad  \text{in} ~ \Omega_{M}\\
          D^{\alpha} u=0  \ \ \qquad\text{on } ~\partial \Omega_{M} \quad \text{for} \ \ |\alpha| \leq k-1.
            \end{array} \right. 
\end{eqnarray}

\medskip\noindent{\bf Step 1:} We claim that for any $\epsilon >0$ there exists $q_{\epsilon} \in  L^{n/2k} (\Omega_{M}) $ and $f_{\epsilon} \in L^{\infty} ( \Omega_{M})$ such that
\begin{align}
bu= q_{\epsilon} u + f_{\epsilon} , ~ \qquad \left\| q_{\epsilon}  \right\|_{ L^{n/2k} (\Omega_{M})} < \epsilon.
\end{align}
\medskip

\noindent
Now  $\displaystyle{\lim \limits_{i \rightarrow + \infty}  \int   \limits_{\{  |u| \geq  i \}} |b|^{n/2k} ~ dv_{g} = 0}$,   so given any $\epsilon >0$ we can choose $i_{0} $  such that 
$$ \int   \limits_{\{  |u| \geq  i_{0} \}} |b|^{n/2k} ~ dv_{g} < \epsilon^{n/2k}.$$
We define $q_{\epsilon} := \chi_{\{  |u| \geq  i_{0} \}} b$ and $f_{\epsilon}:= (b-q_{\epsilon})u=\chi_{\{  |u| < i_{0} \}}b$.
Then, since $|b| \leq  C(1+ \left| u \right|^{ \crit - 2} )$, we have that $\left\| q_{\epsilon} \right\|_{L^{n/2k}(\Omega_{M})} <  \epsilon$ and $f_\epsilon\in L^\infty(M)$. This proves our claim and ends Step 1.

\smallskip\noindent We rewrite \eqref{eq:P:b:u} as
\begin{eqnarray}
 \left \{ \begin{array} {lc}
          Pu  = q_{\epsilon}u + f_{\epsilon} \qquad  \text{in} ~ \Omega_{M}\\
          D^{\alpha} u=0   \quad \qquad  \qquad\text{on } ~\partial \Omega_{M} \quad \text{for} \ \ |\alpha| \leq k-1.
            \end{array} \right. 
\end{eqnarray} 
Let $\mathscr{H}_{\epsilon}$ be the operator defined formally as 
\begin{align}
\mathscr{H}_{\epsilon} u = (P^{k}_{g})^{-1}(q_{\epsilon} u ).
\end{align}
Then $  Pu  = q_{\epsilon}u + f_{\epsilon} $ becomes $u  - \mathscr{H}_{\epsilon} u = (P^{k}_{g})^{-1}(f_{\epsilon}  )$.

\medskip\noindent{\bf Step 2:} we claim that for any $s>1$, $\mathscr{H}_{\epsilon}$ maps $L^s(\Omega_M)$ to $L^s(\Omega_M)$.

\smallskip\noindent We prove the claim. Let $v \in L^{s}(\Omega_{M})$, $s \geq 2^{\sharp}_{k}$, then   $q_{\epsilon} v \in L^{\hat{s}}(\Omega_{M})$ where $\displaystyle{\hat{s}=\frac{ns}{n+ 2ks} }$, and we have by H\"{o}lder inequality
\begin{align}
\left\| q_{\epsilon} v \right\|_{ L^{\hat{s}}(\Omega_{M})} \leq  \left\| q_{\epsilon}  \right\|_{ L^{n/2k}(\Omega_{M})} \left\| v \right\|_{ L^{s}(\Omega_{M})} 
\end{align}
Since $ \left\| q_{\epsilon}  \right\|_{ L^{n/2k} (\Omega_{M})} <  \epsilon$, so we have 
\begin{align}
\left\| q_{\epsilon} v \right\|_{ L^{\hat{s}}(\Omega_{M})} \leq  \epsilon \left\| v \right\|_{ L^{s}(\Omega_{M})} 
\end{align}
\medskip

\noindent
From {\eqref{existence and uniqueness}} it follows that there exists  a unique $v_{\epsilon} \in H^{\hat{s}}_{2k} (\Omega_{M})$ such that
\begin{eqnarray}
 \left \{ \begin{array} {lc}
          Pv_{\epsilon}  = q_{\epsilon} v  \qquad  \text{in} ~ \Omega_{M}\\
          D^{\alpha} v_{\epsilon}=0   \qquad\text{on } ~\partial \Omega_{M} \quad \text{for} \ \ |\alpha| \leq k-1            \end{array} \right. 
\end{eqnarray} 
weakly. Further we have for a positive constant $C(s)$
\begin{align}
\left\| v_{\epsilon} \right\|_{H^{\hat{s}}_{2k}(\Omega_{M})} \leq C(s)  \left\| q_{\epsilon} v \right\|_{ L^{\hat{s}}(\Omega_{M})}
\end{align}
So we obtained that
\begin{align}
\left\| v_{\epsilon}  \right\|_{H^{\hat{s}}_{2k}(\Omega_{M})} \leq C(s) \epsilon \left\| v \right\|_{ L^{s}(\Omega_{M})}. 
\end{align}
\medskip

\noindent
By Sobolev embedding theorem $H^{\hat{s}}_{2k} (\Omega_{M}) $ is continuously imbedded in $ L^{s}(\Omega_{M}) $  so $v_{\epsilon} \in L^{s}(\Omega_{M})$ and  we have 
\begin{align}
\left\| v_{\epsilon}  \right\|_ { L^{s}(\Omega_{M}) } \leq C(s) \epsilon \left\| v \right\|_{ L^{s}(\Omega_{M}) } .
\end{align}
\medskip

\noindent
In other words, for any  $s \geq 2^{\sharp}_{k}$ the operator $\mathscr{H}_{\epsilon}$ acts  from $ L^{s}(\Omega_{M}) $  into $  L^{s}(\Omega_{M})$, and its norm  $ \displaystyle{\left\| \mathscr{H}_{\epsilon} \right\|_{L^{s}  \rightarrow L^{s} }  \leq C(s) \epsilon } $. This proves the claim and ends Step 2.

\medskip\noindent {\bf Step 3:} Now let $s \geq 2^{\sharp}_{k}$ be given, then for $\epsilon >0$ sufficiently small one has
\begin{align}
\left\| \mathscr{H}_{\epsilon} \right\|_{L^{s}  \rightarrow L^{s} }  \leq \frac{1}{2}
\end{align}
and so the operator $I- \mathscr{H}_{\epsilon} : L^{s}(\Omega_{M}) \longrightarrow L^{s}(\Omega_{M}) $  is invertible. We have 
\begin{align}
u  - \mathscr{H}_{\epsilon} u = (P^{k}_{g})^{-1}(f_{\epsilon}  )
\end{align}
Since $u \in L^{2^{\sharp}_{k}}( \Omega_{M})$ and  $f_{\epsilon} \in L^{\infty} ( \Omega_{M})$, so $u \in L^{p}( \Omega_{M})$  for all $1< p < + \infty$.

\medskip\noindent This ends the proof of Theorem \ref{Lp}.\hfill$\Box$

\begin{proposition}{\label{main regularity}}
 Let $(M,g)$ be a  smooth, compact Riemannian manifold  of dimension $n$ and let $k$ be a positive integer such that $2k < {n}$.  Let $f\in C^{0,\theta}(\Omega_{M})$ a H\"older continuous function. Let $\Omega_{M}$ be a smooth domain in $M$ and suppose $u \in H^{2}_{k,0}(\Omega_{M})$ be a weak solution of the equation
 \begin{eqnarray}
 \left \{ \begin{array} {lc}
          Pu  = f \left| u \right|^{ \crit - 2} u \hbox{ or }f (u^{+})^{ \crit - 1}\qquad  \text{in} ~ \Omega_{M}\\
          D^{\alpha} u=0  \ \  \  \qquad \qquad\text{on } ~\partial \Omega_{M} \quad \text{for} \ \ |\alpha| \leq k-1.
            \end{array} \right. 
\end{eqnarray}
Then $u \in C^{2k}( {\Omega}_{M})$, and is a classical solution  of the above equation. Further if $u > 0$ and $f\in C^\infty(\Omega_M)$, then $u \in C^{\infty}( {{\Omega}}_{M})$. 
\end{proposition}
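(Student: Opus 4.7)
\medskip\noindent\textbf{Proof proposal.} The plan is a standard bootstrap: upgrade $u$ from $H^2_{k,0}(\Omega_M)$ to $L^p$ for every $p<+\infty$ via Theorem \ref{Lp}, then to $H^p_{2k}$ via Proposition \ref{existence and uniqueness}, then to $C^{2k-1,\beta}$ by Sobolev embedding, and finally to $C^{2k,\theta'}$ by Schauder-type estimates for the $2k$-th order elliptic operator $P$.

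\medskip\noindent First I would verify that the nonlinearity $F(x,u):=f(x)|u|^{\crit-2}u$ (or $F(x,u):=f(x)(u^+)^{\crit-1}$) satisfies the growth condition $|F(x,u)|\leq C|u|(1+|u|^{\crit-2})$ of Theorem \ref{Lp}: this is immediate because $f\in C^{0,\theta}(\Omega_M)$ is bounded on the compact set $\overline{\Omega_M}$. Theorem \ref{Lp} then yields $u\in L^p(\Omega_M)$ for all $1<p<+\infty$, whence $F(\cdot,u)\in L^p(\Omega_M)$ for every $p<+\infty$.

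\medskip\noindent For any $p\geq 2$, Proposition \ref{existence and uniqueness} produces $\widetilde u\in H^p_{k,0}(\Omega_M)\cap H^p_{2k}(\Omega_M)$ weakly solving $P\widetilde u=F(\cdot,u)$ with Dirichlet data. Since $\Omega_M$ has finite volume and $p\geq 2$, $\widetilde u\in H^2_{k,0}(\Omega_M)$; coercivity of $P$ applied to $P(u-\widetilde u)=0$, tested against $u-\widetilde u$, forces $\widetilde u=u$, so $u\in H^p_{2k}(\Omega_M)$ for every $p$. Taking $p>n$, Sobolev embedding gives $u\in C^{2k-1,\beta}(\overline{\Omega_M})$ with $\beta=1-n/p\in(0,1)$. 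Because $\crit-1>1$, the maps $t\mapsto |t|^{\crit-2}t$ and $t\mapsto (t^+)^{\crit-1}$ are locally $C^1$ on $\R$, so their composition with the H\"older continuous function $u$ is again H\"older; combined with $f\in C^{0,\theta}$ this gives $F(\cdot,u)\in C^{0,\theta'}(\overline{\Omega_M})$ for some $\theta'>0$. The Agmon-Douglis-Nirenberg Schauder estimates for $P$, whose coefficients are smooth and whose principal symbol $\|\xi\|_g^{2k}$ is uniformly positive on the compact $\overline{\Omega_M}$, then promote this to $u\in C^{2k,\theta'}(\overline{\Omega_M})$, so $u$ is a classical solution. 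For the smoothness claim under $u>0$: on each compact $K\Subset\Omega_M$, $u\geq c_K>0$, so $t\mapsto t^{\crit-1}$, which is smooth on $(0,+\infty)$, composed with $u$ has the same regularity as $u$ on $K$; since $f\in C^\infty(\Omega_M)$, iterating interior Schauder estimates gains $2k$ derivatives at each step and produces $u\in C^\infty(\Omega_M)$.

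\medskip\noindent The hard part will mostly be bookkeeping: one must check that $P$, which is defined at the manifold level, fits into the Agmon-Douglis-Nirenberg framework inside local charts, where it becomes a scalar elliptic operator of order $2k$ with smooth coefficients and principal part $(-g^{ij}\partial_{ij})^k$; uniform ellipticity is automatic by compactness of $\overline{\Omega_M}$. The only genuine subtlety concerns the sign-changing nonlinearity $(u^+)^{\crit-1}$: although $t\mapsto t^+$ is merely Lipschitz, the composition $t\mapsto (t^+)^{\crit-1}$ is $C^1$ on $\R$ because $\crit-1>1$, so the H\"older step in the bootstrap still goes through without modification.
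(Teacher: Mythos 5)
Your proposal is correct and follows essentially the same bootstrap as the paper: Theorem \ref{Lp} gives $u\in L^p$ for all $p$, elliptic ($L^p$/ADN) theory via Proposition \ref{existence and uniqueness} gives $u\in H^p_{2k}$, Sobolev embedding gives $u\in C^{2k-1,\gamma}(\overline{\Omega}_M)$, the $C^1$ regularity of $t\mapsto |t|^{\crit-2}t$ and $t\mapsto (t^+)^{\crit-1}$ plus Schauder estimates give $u\in C^{2k,\gamma}$, and iteration of Schauder estimates in the positive case gives smoothness. Your extra details (identifying $\widetilde u=u$ by coercivity, and iterating interior estimates on compacts where $u\geq c_K>0$ for the $C^\infty$ conclusion) only make explicit what the paper leaves implicit.
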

\smallskip\noindent{\it Proof of Proposition \ref{main regularity}:} It  follows from  \eqref{Lp} that $ u \in H^{p}_{2k}(\Omega_{M})$ for all $1 < p < + \infty$. By Sobolev imbedding theorem this implies  $u \in C^{2k-1, \gamma} (\overline{\Omega}_{M})$ for all $0 <\gamma< 1$.   $\left| u \right|^{ \crit - 2} u,(u^{+})^{ \crit - 1}  \in C^{1}(\overline{\Omega}_{M})$. The Schauder estimates (here again, we refer to Agmon-Douglis-Nirenberg \cite{ADN}) then yield $ u\in  C^{2k,\gamma}( \overline{\Omega}_{M}) $ for all $\gamma\in (0,1)$, and $u$ is  a classical solution. 

\smallskip\noindent If  $u > 0$, then the right-hand-side is $ u^{\crit-1}$ and has the same regularity as $u$. Therefore, iterating the Schauder estimates yields $u\in C^\infty(\overline{\Omega_M})$. This ends the proof of Proposition \ref{main regularity}.\hfill$\Box$



 \section{Appendix: Local Comparison of the Riemannian norm with the Euclidean norm}
 
 \noindent
 Let $(M,g)$ be a smooth, compact Riemannian manifold of dimension $n\geq 1$. For  any point $p\in M$ there exists a local coordinate around $p$,   $\varphi^{-1}_{p}: \Omega \subset \R^{n} \to M$, $\varphi(p)=0$,   such that in these   local coordinates  one has  for all indices $ i,j,k = 1,\ldots,n$ 
   $$\left\{ \begin{array}{ll}
 (1-\epsilon) \delta_{ij}\leq g_{ij}(x)\leq  (1+\epsilon)\delta_{ij} &~ \text{as bilinear forms.} \\
| g_{ij}(x) - \delta_{ij} | \leq \epsilon&\\
 \end{array}\right.$$
 
 \noindent
 Here we have identified $T_{p}M  \cong \R^{n}$ for any point $p \in M$.
 For example,  one can  take the exponetial map at $p$ : $exp_{p}$, which is normal at $p$.  We will let $\iota_g$ be  the injectivity radius of $M$. Using the above local comparison of the Riemannian metric with the Euclidean metric one obtains

\noindent

 \begin{lemma}{\label{normcomparison}}
 Let $(M,g)$ be  a  smooth, compact Riemannian manifold  of dimension $n$  and let $k$ be positive integer such that $2k <n$. We fix $s\geq 1$. Let  $\varphi^{-1}_{p}: \Omega \subset \R^{n} \to M$, $\varphi(p)=0$  be a local coordinate around $p$ with the above mentioned properties. 
 Then given any $\epsilon_{0} >0$  there exists   $ \tau\in (0,\iota_{g})$,  such that for any point $p \in M$, and  $u \in C_c^\infty\left( B_{0}(\tau)\right)$ one has
  \begin{align}
  (1-\epsilon_{0})\int \limits_{\R^n} ( \Delta^{k/2} u )^2 ~{dx}
  \leq \int \limits_{M}  ( \Delta^{k/2}_{{g}} (u \circ \varphi_{p} ) )^2  ~{dv_{g}}
  \leq  {(1+\epsilon_{0})} \int \limits_{\R^n} ( \Delta^{k/2} u )^2 ~{dx} 
  \end{align}
  and 
  \begin{align}
 (1-\epsilon_{0})\int \limits_{\R^n} |u|^{s}  ~ dx  \leq \int \limits_{M}  |u \circ \varphi_{p}  |^{s} ~ dv_{g}  \leq  {(1+\epsilon_{0})} \int \limits_{\R^n}  |u|^{s} ~ dx  
  \end{align}
 
 \end{lemma}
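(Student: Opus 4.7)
The plan is to handle the two families of inequalities separately. The $L^s$-comparison is elementary: in the given chart $\varphi_p$ the volume element reads $dv_g = \sqrt{\det g_{ij}(x)}\, dx$, and the pointwise bound $|g_{ij}(x) - \delta_{ij}| \le \epsilon$ gives $\sqrt{\det g_{ij}(x)} = 1 + O(\epsilon)$ uniformly on $B_0(\tau)$. Using for instance the exponential chart $\varphi_p = \exp_p^{-1}$ at each $p\in M$, normal coordinates yield $g_{ij}(0) = \delta_{ij}$, so compactness of $M$ together with the continuity of $g$ ensures that $\epsilon$ can be made as small as desired by taking $\tau$ small enough, uniformly in $p$. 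The change of variables $x=\varphi_p(y)$ then immediately yields both $L^s$-inequalities.

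For the inequality involving $\Delta_g^{k/2}$, I would use a rescaling argument. Set $\tilde{u}(y) := u(\tau y)$ for $y \in B_0(1)$ and define the rescaled metric by $\tilde{g}_{ij}(y) := g_{ij}(\tau y)$ (in the fixed chart $\varphi_p$). A direct computation using the coordinate expression $\Delta_g = -g^{ij}\partial_{ij}^2 + B^j\partial_j$ shows that $(\Delta_{\tilde{g}}^{k/2} \tilde{u})(y) = \tau^{k}(\Delta_g^{k/2} u)(\tau y)$, so performing the change of variables $x=\tau y$ in both sides, the common factor $\tau^{n-2k}$ cancels, and it suffices to prove that for all $\tilde{u} \in C_c^\infty(B_0(1))$,
\[ (1-\epsilon_0)\int_{B_0(1)} (\Delta^{k/2} \tilde{u})^2\, dy \le \int_{B_0(1)} (\Delta_{\tilde{g}}^{k/2} \tilde{u})^2 \sqrt{|\tilde{g}|}\, dy \le (1+\epsilon_0)\int_{B_0(1)} (\Delta^{k/2} \tilde{u})^2\, dy \]
provided $\tau$ is small enough, uniformly in $p \in M$.

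The crucial observation is that $\tilde{g} \to \delta$ in $C^k(\overline{B_0(1)})$ as $\tau \to 0$, uniformly in $p$. Indeed, $\|\tilde{g} - \delta\|_{C^0(B_0(1))} = \|g - \delta\|_{C^0(B_0(\tau))} \to 0$ by normal coordinates, while for $l \ge 1$ one has $\|\partial^l \tilde{g}\|_{C^0(B_0(1))} = \tau^l \|\partial^l g\|_{C^0(B_0(\tau))} = O(\tau^l)$, with constants uniform in $p$ by compactness of $M$. Expanding $\Delta_{\tilde{g}}^{k/2}$ iteratively via the Leibniz rule yields $\Delta_{\tilde{g}}^{k/2} \tilde{u} = \Delta^{k/2} \tilde{u} + \mathcal{R}_{\tilde{g}}(\tilde{u})$, where $\mathcal{R}_{\tilde{g}}$ is a linear differential operator of order at most $k$ whose coefficients are polynomial expressions in $\tilde{g} - \delta$, $\tilde{g}^{-1} - \delta^{-1}$, and their partial derivatives up to order $k$. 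Consequently
\[ \|\mathcal{R}_{\tilde{g}}(\tilde{u})\|_{L^2(B_0(1))} \le C_k\,\|\tilde{g} - \delta\|_{C^k(B_0(1))}\,\|\tilde{u}\|_{H^k(B_0(1))}. \]

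To close the estimate, I invoke the Euclidean Calder\'on--Zygmund inequality for compactly supported functions, namely $\|\tilde{u}\|_{H^k(B_0(1))} \le C\,\|\Delta^{k/2} \tilde{u}\|_{L^2(\R^n)}$ for $\tilde{u} \in C_c^\infty(B_0(1))$ (which follows from $L^2$-boundedness of Riesz transforms combined with Poincar\'e's inequality for compactly supported functions). Together with $\sqrt{|\tilde{g}|} = 1 + O(\|\tilde{g}-\delta\|_{C^0})$ and the triangle inequality in $L^2$, expanding the middle integrand gives the two-sided comparison with error $\epsilon_0$ controlled by $\|\tilde{g}-\delta\|_{C^k}$, which tends to $0$ as $\tau \to 0$ uniformly in $p$. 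The main technical obstacle will be the careful Leibniz expansion of $\Delta_g^{k/2}$---particularly when $k$ is odd, since then $\Delta_g^{k/2} = \nabla_g \Delta_g^{(k-1)/2}$, and one must correctly identify the leading Euclidean term and absorb into $\mathcal{R}_{\tilde{g}}$ not only the error coming from the inner $\Delta_g^{(k-1)/2}$ but also the discrepancy between $|\cdot|_{\tilde{g}}^2 = \tilde{g}^{ij}(\cdot)_i(\cdot)_j$ and the Euclidean inner product appearing in the squared integrand.
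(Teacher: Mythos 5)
Your proposal is correct, and it proves the same statement by a mildly different packaging of the same underlying mechanism. The paper works directly on the small ball $B_0(\tau)$: it expands $\Delta_g^{k/2}$ in coordinates, bounds the discrepancy pointwise by $\epsilon\,|\nabla^{k}u|$ plus a fixed constant times lower-order derivatives, and then absorbs the lower-order integrals into $\int(\Delta^{k/2}u)^2\,dx$ by H\"older on the small ball (which produces the powers of $\tau$) combined with the Euclidean Sobolev inequality and integration by parts. You instead rescale to the unit ball, so that the smallness migrates into the coefficients: the zeroth-order deviation $\tilde g-\delta$ is small by uniform continuity of $g$ in the charts, while every derivative of $\tilde g$ carries a factor $\tau^{l}$, and you then close the estimate with the single inequality $\Vert \tilde u\Vert_{H^k(B_0(1))}\leq C\Vert \Delta^{k/2}\tilde u\Vert_{L^2}$ for compactly supported functions (Calder\'on--Zygmund/Plancherel plus Poincar\'e). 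The two routes are equivalent in substance --- both ultimately control the full $H^k$-norm of a compactly supported function by its $\Vert\Delta^{k/2}\cdot\Vert_2$ seminorm and both need uniform (in $p$) bounds on $g$ and its derivatives up to order about $k$ in the chosen charts, which hold for exponential charts by compactness and which the paper uses implicitly through its constants $C$ --- but your rescaled formulation is arguably cleaner, since it packages all error terms into one operator $\mathcal{R}_{\tilde g}$ with $C^k$-small coefficients rather than tracking H\"older/Sobolev exponents term by term. Your scaling identity $\Delta_{\tilde g}\tilde u(y)=\tau^2(\Delta_g u)(\tau y)$ (and hence the $\tau^{k}$ law for $\Delta^{k/2}$, including the odd case measured in the $\tilde g$-norm) checks out, and the cancellation of the common factor $\tau^{2k-n}$ on both sides makes the reduction to the unit ball legitimate. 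The one step you defer --- the odd-$k$ bookkeeping, where the squared integrand involves $\tilde g^{ij}\partial_i(\cdot)\partial_j(\cdot)$ rather than a Euclidean square --- is genuinely handled by the same estimates, since $\tilde g^{ij}-\delta^{ij}$ is $C^0$-small and the extra terms it generates are again of the form ``small coefficient times $H^k$-norm''; the paper treats this case by a parallel direct computation. So I see no gap, only a different (and defensible) arrangement of the same ingredients.
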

 
\smallskip\noindent{\it Proof of Lemma \ref{normcomparison}:} In terms of the   coordinate map $\varphi^{-1}_{p}: \Omega \subset \R^{n} \to M$,  for any $f \in C^2(M)$ we have 
\begin{equation}
 \lap_g f \left( x \right) = - g^{ij}(x)  \left( \frac{\partial^2 (f \circ \varphi^{-1} )}{\partial x_i \partial x_j}(x) -
 \Gamma_{ij}^k (\varphi(x)) \frac{\partial (f \circ \varphi^{-1} )}{\partial x_k}(x) \right).
\end{equation}
Since the manifold $M$ is compact, then given any $\epsilon >0$ there exists a  $\tau \in (0, \iota_g)$ depending only on  $(M,g)$, such that for any point $p \in M$ and for any $x \in B_{0}(\tau) \subset \R^{n} $ one has  for all indices $ i,j,k = 1,\ldots,n$ 
   $$\left\{ \begin{array}{ll}
 (1-\epsilon) \delta_{ij}\leq g_{ij}(x)\leq  (1+\epsilon)\delta_{ij} &~ \text{as bilinear forms.} \\
| g_{ij}(x) - \delta_{ij} | \leq \epsilon&\\
 \end{array}\right.$$
Without loss of generality we can assume that $\tau < 1$. We let $u \in C^\infty_c(\R^n)$ be such that $supp(u)\subset B_0(\tau )$. In the sequel, the constant $C$ will denote any positive constant depending only on $(M,g)$ and $\tau$: the same notation $C$ may apply to different constants from line to line, and even in the same line. All integrals below are taken over $B_0(\tau)$, and we will therefore omit to write the domain for the sake of clearness.


\medskip\noindent{\bf Case 1: $k$ is even.} We then write $k= 2 m $, $m\geq 1$. Then calculating in terms of local coordinates we obtain 
\begin{equation}
\left| \Delta_{g}^{m} (u \circ \varphi_{p}) (\varphi_{p}^{-1}(x))- \Delta^{m} u (x)  \right|\leq
 \epsilon \left|\nabla^{2 m} u (x)\right|  + C\sum \limits_{\beta=1}^{2 m -1} \left|\nabla^{(2m-\beta)} u (x)\right| 
\end{equation}
where $C_{g}$ is a constant depending only on  the metric $g$ on $M$. Then we have  
\begin{align}
&\left| ~ \int  \left( \Delta_{{g}}^{m}(u \circ \varphi_{p})(\varphi^{-1}_{p}(x))\right)^2~{dx}- \int  \left( \Delta^{m}u\right)^2 {dx}  \right| \leq 
 2^{2}  \epsilon^{2}     \int  \left|\nabla^{2 m} u \right|^2 {dx}  +  \notag\\
 & C    \sum \limits_{\beta=1}^{2 m -1} \int \left|\nabla^{(2m-\beta)} u \right|^2 {dx} 
+  2 \epsilon    \int  \left|\nabla^{2 m} u \right| \left| \Delta^{m}u\right| {dx} + C   \sum \limits_{\beta=1}^{2 m -1} \int  \left| \Delta^{m}u\right|\left|\nabla^{(2m- \beta)} u \right| {dx} .\\
\end{align}
Now   for any  $\beta$ such that $\beta \leq 2m-1$  we have $\nabla^{(2m-\beta)} u \in \mathscr{D}^{\beta ,2} (\R^n)$ and by Sobolev embedding theorem this implies that $\left|\nabla^{(2m- \beta)} u \right|^2  \in L^{2_{\beta}^{\sharp}/2}(\R^n)$. Applying the H\"{o}lder inequality we obtain 
\begin{equation}
 \sum \limits_{\beta=1}^{2 m -1} \int 
 \left|\nabla^{(2m-\beta)} u \right|^2 {dx}
  \leq C \left(  \sum \limits_{\beta=1}^{2 m -1}  \tau^{2 \beta}  \right) \left( \int 
    \left|\nabla^{(2m-\beta)} u \right|^{2_{\beta}^{\sharp}} {dx} \right)^{2/{2_{\beta}^{\sharp}}} 
\end{equation}
And then the Sobolev inequality gives us
\begin{equation}
\left( \int 
    \left|\nabla^{(2m-\beta)} u \right|^{2_{\beta}^{\sharp}} {dx} \right)^{2/{2_{\beta}^{\sharp}}} \leq
   C  \int   \left| \nabla^{2 m} u \right|^{2} {dx} .
\end{equation}
Applying the integration by parts formula, we obtain 
\begin{equation}
  \int  \left|\nabla^{2 m} u \right|^2 {dx} =  \int  
  \left(\Delta^{ m} u \right)^2 {dx}.
\end{equation}
\medskip

\noindent
So  we have, since $\tau <1$
\begin{align}
 \sum \limits_{|\beta|=1}^{2 m -1} \int 
 \left|\nabla^{2m-\beta} u \right|^2 {dx} \leq  C \tau  \int  
  \left(\Delta^{ m} u \right)^2 {dx}.
\end{align}
\medskip

\noindent
Therefore, we get that
\begin{align}
\left| ~ \int  \left( \Delta_{{g}}^{m}(u \circ \varphi_{p} )(\varphi^{-1}_{p}(x))\right)^2~{dx}- \int  \left( \Delta^{m}u\right)^2 {dx}  \right|  & \leq 
 C \left(  \epsilon     +  {\tau}   \right) \int   \left( \Delta^{m}u\right)^2 {dx} .
\end{align}
\medskip

\noindent
Now  in these  local coordinates  one has 
\begin{align}
&&(1- \epsilon)^{n/2} \int  \left( \Delta_{{g}}^{m}(u \circ \varphi_{p})(\varphi_{p}^{-1}(x))\right)^2~{dx}    \leq
 ~ \int \limits_{M} \left( \Delta_{{g}}^{m}(u \circ \varphi_{p})\right)^2~{dv_{g}}    \\
&&\leq (1+  \epsilon)^{n/2} ~ \int  \left( \Delta_{{g}}^{m}(u \circ \varphi_{p})(\varphi_{p}^{-1}(x))\right)^2~{dx} . 
\end{align}

\noindent
So given  an  $\epsilon_{0} >0$ small, we first choose  $\epsilon$ small and then choose  a sufficiently small  $\tau $, so that  for any  $u \in C_c^\infty\left( B_{0}(\tau)\right)$  we have

\begin{align}
\left| ~ \int  \left( \Delta_{{g}}^{m}(u \circ \varphi_{p})\right)^2~{dv_{{g}}}- \int  \left( \Delta^{m}u\right)^2 {dx}  \right|  \leq \epsilon_{0} \int  \left( \Delta^{m}u\right)^2 {dx}. 
\end{align}
So we   have the lemma for  $k$  even.

\medskip\noindent{\bf Case 2: $k$ is odd.} We then write $k= 2 m + 1$ with $m\geq 0$. Calculating in terms of local coordinates, like in the even case, we obtain
\begin{eqnarray}
&&\left|~  |\nabla( \Delta_{g}^{m} (u \circ \varphi_{p}))|^{2} (\varphi_{p}^{-1}(x))- |\nabla(\Delta^{m} u)| ^{2}(x)  \right| \leq  \epsilon |\nabla(\Delta^{m} u)| ^{2}(x)\\
&&+ C \epsilon \left|\nabla^{2 m+1} u \right|^{2} (x) + C\sum \limits_{\beta=1}^{2 m } \left|\nabla^{(2 m+1-\beta)} u \right|^{2} (x)  \\
&& + C\epsilon \left|\nabla^{2 m+1} u \right| (x) ~|\nabla(\Delta^{m} u)|(x)   + C\sum \limits_{\beta=1}^{2 m } \left|\nabla^{(2 m+1-\beta)} u \right| (x)~|\nabla(\Delta^{m} u)|(x)
\end{eqnarray}
for all $x\in B_0(\tau)$. Therefore 
\begin{align}
&\left| ~ \int  |\nabla( \Delta_{g}^{m} (u \circ \varphi_{p}))|^{2} (\varphi_{p}^{-1}(x)) ~{dx}- \int  |\nabla(\Delta^{m} u)| ^{2}(x)~{dx}  \right|  \leq  \epsilon \int   |\nabla(\Delta^{m} u)| ^{2} ~ dx \notag \\
&+ C \epsilon\int  \left|\nabla^{2 m+1} u \right|^{2}  ~ dx 
 + C\sum \limits_{\beta=1}^{2 m }\int  \left|\nabla^{(2 m+1-\beta)} u \right|^{2}  ~ dx   \notag\\
& + C \epsilon \int  \left|\nabla^{2 m+1} u \right|  |\nabla(\Delta^{m} u)|   ~ dx 
 + C\sum \limits_{\beta=1}^{2 m }\int  \left|\nabla^{(2 m+1-\beta)} u \right| |\nabla(\Delta^{m} u)|~ dx 
\end{align}
And then by  calculations similar to the even case, along with the integration by parts formula, we obtain 
\begin{align}
\left| ~ \int  |\nabla( \Delta_{g}^{m} (u \circ \varphi_{p}))|^{2} (\varphi_{p}^{-1}(x)) ~{dx}- \int  |\nabla(\Delta^{m} u)| ^{2}(x)~{dx}  \right| 
 \leq    \tilde{C}_{g} \left(  \epsilon + \sqrt{\tau}   \right)  \int   |\nabla(\Delta^{m} u)| ^{2} ~ dx.
\end{align}

\noindent
Now given  an  $\epsilon_{0} >0$ small, we first choose  $\epsilon$ small and then choose  a sufficiently small  $\tau $, so that  for any  $u \in C_c^\infty\left( B_{0}(\tau)\right)$  we have

\begin{align}
\left| ~ \int \limits_{M} |\nabla( \Delta_{g}^{m} (u \circ \varphi_{p}))|^{2}  ~{dv_{g}}- \int  |\nabla(\Delta^{m} u)| ^{2}~{dx}  \right|  \leq \epsilon_{0} \int  |\nabla(\Delta^{m} u)| ^{2}~{dx}.
\end{align}
\medskip

\noindent
Then one has the lemma for  $k$  odd. This ends the proof of Lemma \ref{normcomparison}.\hfill$\Box$


\begin{thebibliography}{99}

\bib{ADN}{article}{
   author={Agmon, S.},
   author={Douglis, A.},
   author={Nirenberg, L.},
   title={Estimates near the boundary for solutions of elliptic partial
   differential equations satisfying general boundary conditions. I},
   journal={Comm. Pure Appl. Math.},
   volume={12},
   date={1959},
   pages={623--727},
   }


\bib{anderson}{article}{
   author={Anderson, Michael T.},
   title={Convergence and rigidity of manifolds under Ricci curvature
   bounds},
   journal={Invent. Math.},
   volume={102},
   date={1990},
   number={2},
   pages={429--445},
}

 \bib{aubin}{article}{
   author={Aubin, Thierry},
   title={\'Equations diff\'erentielles non lin\'eaires et probl\`eme de
   Yamabe concernant la courbure scalaire},
   journal={J. Math. Pures Appl. (9)},
   volume={55},
   date={1976},
   number={3},
   pages={269--296},
   }

\bib{aubinbook}{book}{
   author={Aubin, Thierry},
   title={Some nonlinear problems in Riemannian geometry},
   series={Springer Monographs in Mathematics},
   publisher={Springer-Verlag, Berlin},
   date={1998},
   pages={xviii+395},
}



 \bib{BWW}{article}{
   author={Bartsch, Thomas},
   author={Weth, Tobias},
   author={Willem, Michel},
   title={A Sobolev inequality with remainder term and critical equations on
   domains with topology for the polyharmonic operator},
   journal={Calc. Var. Partial Differential Equations},
   volume={18},
   date={2003},
   number={3},
   pages={253--268},
}



\bib{boggio}{article}{
   author={Boggio, T.},
   title={Sulle funzioni di Green d'ordine $m$},
   journal={Rend. Circ. Mat. Palermo},
   volume={20},
   date={1905},
   pages={97-135},
}

\bib{br1}{book}{
   author={Branson, Thomas P.},
   title={The functional determinant},
   series={Lecture Notes Series},
   volume={4},
   publisher={Seoul National University, Research Institute of Mathematics,
   Global Analysis Research Center, Seoul},
   date={1993},
   pages={vi+103},
   
}

\bib{br2}{article}{
   author={Branson, Thomas P.},
   title={Sharp inequalities, the functional determinant, and the
   complementary series},
   journal={Trans. Amer. Math. Soc.},
   volume={347},
   date={1995},
   number={10},
   pages={3671--3742},
   
}

\bib{bo}{article}{
   author={Branson, Thomas P.},
   author={{\O}rsted, Bent},
   title={Explicit functional determinants in four dimensions},
   journal={Proc. Amer. Math. Soc.},
   volume={113},
   date={1991},
   number={3},
   pages={669--682},
   
}

	
	
	
\bib{coron}{article}{
   author={Coron, Jean-Michel},
   title={Topologie et cas limite des injections de Sobolev},
   journal={C. R. Acad. Sci. Paris S\'er. I Math.},
   volume={299},
   date={1984},
   number={7},
   pages={209--212},
   }

\bib{dr}{article}{
   author={Delano{\"e}, Philippe},
   author={Robert, Fr{\'e}d{\'e}ric},
   title={On the local Nirenberg problem for the $Q$-curvatures},
   journal={Pacific J. Math.},
   volume={231},
   date={2007},
   number={2},
   pages={293--304},
   
}


\bib{DHL}{article}{
   author={Djadli, Zindine},
   author={Hebey, Emmanuel},
   author={Ledoux, Michel},
   title={Paneitz-type operators and applications},
   journal={Duke Math. J.},
   volume={104},
   date={2000},
   number={1},
   pages={129--169},
}

\bib{espositorobert}{article}{
   author={Esposito, Pierpaolo},
   author={Robert, Fr{\'e}d{\'e}ric},
   title={Mountain pass critical points for Paneitz-Branson operators},
   journal={Calc. Var. Partial Differential Equations},
   volume={15},
   date={2002},
   number={4},
   pages={493--517},
   }


\bib{fg1}{article}{
   author={Fefferman, Charles},
   author={Graham, C. Robin},
   title={Conformal invariants},
   note={The mathematical heritage of \'Elie Cartan (Lyon, 1984)},
   journal={Ast\'erisque},
   date={1985},
   number={Numero Hors Serie},
   pages={95--116},
   }

\bib{fg2}{book}{
   author={Fefferman, Charles},
   author={Graham, C. Robin},
   title={The ambient metric},
   series={Annals of Mathematics Studies},
   volume={178},
   publisher={Princeton University Press, Princeton, NJ},
   date={2012},
   pages={x+113},
   }
		
		
		
\bib{GGS}{book}{
   author={Gazzola, Filippo},
   author={Grunau, Hans-Christoph},
   author={Sweers, Guido},
   title={Polyharmonic boundary value problems},
   series={Lecture Notes in Mathematics},
   volume={1991},
   publisher={Springer-Verlag, Berlin},
   date={2010},
   pages={xviii+423},
}




\bib{gwz}{article}{
   author={Ge, Yuxin},
   author={Wei, Juncheng},
   author={Zhou, Feng},
   title={A critical elliptic problem for polyharmonic operators},
   journal={J. Funct. Anal.},
   volume={260},
   date={2011},
   number={8},
   pages={2247--2282},
}


\bib{gjms}{article}{
   author={Graham, C. Robin},
   author={Jenne, Ralph},
   author={Mason, Lionel J.},
   author={Sparling, George A. J.},
   title={Conformally invariant powers of the Laplacian. I. Existence},
   journal={J. London Math. Soc. (2)},
   volume={46},
   date={1992},
   number={3},
   pages={557--565},
   
}

\bib{gm}{article}{
   author={Gursky, Matthew},
   author={Malchiodi, Andrea},
   title={A strong maximum principle for the Paneitz operator and a
   non-local flow for the $Q$-curvature},
   journal={J. Eur. Math. Soc. (JEMS)},
   volume={17},
   date={2015},
   number={9},
   pages={2137--2173},
   
}
	

\bib{hy1}{article}{
   author={Hang, Fengbo},
   author={Yang, Paul},
   title={Sign of GreenÕs function of Paneitz operators and the Q curvature},
   journal={International Mathematics Research Notices},
   date={2014},
   note={doi: 10.1093/imrn/rnu247},
      
}

\bib{hy2}{article}{
   author={Hang, Fengbo},
   author={Yang, Paul},
   title={Lectures on the fourth order $Q-$curvature equation},
   journal={arXiv:1509.03003},
   date={2015},
      
}

 \bib{hebeycims}{book}{
   author={Hebey, Emmanuel},
   title={Nonlinear analysis on manifolds: Sobolev spaces and inequalities},
   series={Courant Lecture Notes in Mathematics},
   volume={5},
   publisher={New York University, Courant Institute of Mathematical
   Sciences, New York; American Mathematical Society, Providence, RI},
   date={1999},
   pages={x+309},
}
%


\bib{kw}{article}{
   author={Kazdan, Jerry L.},
   author={Warner, F. W.},
   title={Scalar curvature and conformal deformation of Riemannian
   structure},
   journal={J. Differential Geometry},
   volume={10},
   date={1975},
   pages={113--134},
   
}

\bib{PLL}{article}{
   author={Lions, P.-L.},
   title={The concentration-compactness principle in the calculus of
   variations. The limit case. I and II},
   journal={Rev. Mat. Iberoamericana},
   volume={1},
   date={1985},
   number={1, 2},
   pages={45--121, 145--201},
}
		

 \bib{malchiodi}{article}{
   author={Malchiodi, Andrea},
   title={Topological methods for an elliptic equation with exponential
   nonlinearities},
   journal={Discrete Contin. Dyn. Syst.},
   volume={21},
   date={2008},
   number={1},
   pages={277--294},
}


\bib{mazumdar2}{article}{
   author={Mazumdar, Saikat},
   title={Struwe decomposition for polyharmonic operators on compact manifolds with or without boundary},
   note={Preprint. arXiv:1603.07953, hal-01293952},
   date={2016},
   
 }
 
   
\bib{paneitz}{article}{
   author={Paneitz, Stephen M.},
   title={A quartic conformally covariant differential operator for
   arbitrary pseudo-Riemannian manifolds},
   journal={SIGMA },
   volume={4},
   date={2008},
   pages={Paper 036, 3},
   
}

\bib{pucci-serrin}{article}{
   author={Pucci, Patrizia.},
   author={Serrin, James.},
   title={Critical exponents and critical dimensions for polyharmonic operators.},
   journal={J. Math. Pures Appl. (9)},
   volume={69},
   date={1990},
   number={1},
   pages={55-83},
   
  }

\bib{robertpams}{article}{
   author={Robert, Fr{\'e}d{\'e}ric},
   title={Positive solutions for a fourth order equation invariant under
   isometries},
   journal={Proc. Amer. Math. Soc.},
   volume={131},
   date={2003},
   number={5},
   pages={1423--1431},
  
}


 \bib{robertgjms}{article}{
   author={Robert, Fr{\'e}d{\'e}ric},
   title={Admissible $Q$-curvatures under isometries for the conformal GJMS
   operators},
   conference={
      title={Nonlinear elliptic partial differential equations},
   },
   book={
      series={Contemp. Math.},
      volume={540},
      publisher={Amer. Math. Soc., Providence, RI},
   },
   date={2011},
   pages={241--259},
}

		

\bib{struwe}{book}{
   author={Struwe, Michael},
   title={Variational methods},
   series={Ergebnisse der Mathematik und ihrer Grenzgebiete. 3. Folge. A
   Series of Modern Surveys in Mathematics},
   volume={34},
   edition={4},
   note={Applications to nonlinear partial differential equations and
   Hamiltonian systems},
   publisher={Springer-Verlag, Berlin},
   date={2008},
   pages={xx+302},
}

\bib{swanson}{article}{
   author={Swanson, Charles A.},
   title={The best Sobolev constant},
   journal={Appl. Anal.},
   volume={47},
   date={1992},
   number={4},
   pages={227--239},
}
 
 
\bib{vdv}{article}{
   author={Van der Vorst, R. C. A. M.},
   title={Best constant for the embedding of the space $H^2\cap H^1_0(\Omega)$ into $L^{2N/(N-4)}(\Omega)$},
   journal={Differential Integral Equations},
   volume={6},
   date={1993},
   number={2},
   pages={259--276},
}





\end{thebibliography}
\end{document}